\newtheorem{theorem}{Theorem}[section]
\newtheorem{lemma}[theorem]{Lemma}
\newtheorem{proposition}[theorem]{Proposition}
\newtheorem{corollary}[theorem]{Corollary}
\newtheorem{remark}[theorem]{Remark}
\newcommand{\E}{\mathbb{E}^{n+2}}
\newcommand{\R}{\mathbb{R}}
\newcommand{\Q}{\mathbb{Q}}
\newcommand{\QR}{\mathbb{Q}^{n}_{c}\times\mathbb{R}}
\newcommand{\dt}{\partial/\partial t}
\newcommand{\f}{f\colon M^{n}\rightarrow \mathbb{Q}^{n}_{c}\times\mathbb{R}}
\newcommand{\co}{C_{c}(\|T\|t)}
\newcommand{\s}{S_{c}(\|T\|t)}
\begin{document}

\title%[Hypersurfaces with constant principal curvatures in $\mathbb{Q}^{n}_{c}\times\mathbb{R}$]
{Hypersurfaces with constant principal curvatures in $\mathbb{S}^{n}\times\mathbb{R}$ and $\mathbb{H}^{n}\times\mathbb{R}$}

\author{ROSA CHAVES,\\
 Instituto de Matem\'atica e Estat\'istica,\\
  Universidade de S\~ao Paulo, S\~ao Paulo, SP, Brazil. 05508-090\\
  e-mail:~\url{rosab@ime.usp.br} \vspace*{4mm}\\
  ELIANE SANTOS,\\
  Instituto de Matem\'atica, \\
  Universidade Federal da Bahia, Salvador, BA, Brazil. 40170-110\\
   e-mail:~\url{elianesilva@ufba.br}
  }

\maketitle

\begin{abstract}
In this paper, we classify the hypersurfaces in $\mathbb{S}^{n}\times \mathbb{R}$ and $\mathbb{H}^{n}\times\mathbb{R}$, $n\neq 3$, with $g$ distinct constant principal curvatures, $g\in\{1,2,3\}$, where $\mathbb{S}^{n}$ and $\mathbb{H}^{n}$ denote the sphere and hyperbolic space of dimension $n$, respectively. We prove that such hypersurfaces are isoparametric in those spaces.
Furthermore, we find a necessary and sufficient condition for an isoparametric hypersurface in $\mathbb{S}^{n}\times \mathbb{R}\subset \mathbb{R}^{n+2}$ and $\mathbb{H}^{n}\times\mathbb{R}\subset \mathbb{L}^{n+2}$ with flat normal bundle, having constant principal curvatures.
\end{abstract}

\textit{2010 Mathematics Subject Classification.} Primary 53C42; Secondary 53B20.

\textit{Key words and phrases.} Hypersurfaces in product spaces; constant principal curvatures; isoparametric hypersurfaces; family of parallel hypersurfaces.

\section{Introduction}

The study of hypersurfaces in product spaces has attracted the attention of many geometers in recent years.
First the surfaces with  constant mean curvature and more particularly the minimal surfaces in product spaces were studied in works of H. Rosenberg, W. Meeks and U. Abresch, \cite{rosenberg2}, \cite{meeks} and  \cite{rosenberg1}.
They were also studied by  I. Onnis and S. Montaldo in  \cite{irene2}, \cite{irene1}, \cite{irene3} and  B. Nelli in  \cite{nelli1}, between many others.

In  \cite{espinar1} and  \cite{espinar2}, J. Aledo, J. Espinar and J. G\'alvez described the surfaces with constant Gaussian curvature  in $\mathbb{S}^{2}\times\mathbb{R}$ and  $\mathbb{H}^{2}\times\mathbb{R}$. Moreover, J. Espinar,  J. G\'alvez and   H. Rosenberg,  showed in  \cite{espinar3} that a complete surface with constant  positive extrinsec curvature  in   $\mathbb{S}^{2}\times\mathbb{R}$  and  $\mathbb{H}^{2}\times\mathbb{R}$ is a rotational sphere.

 In order to unify the notations we are going to denote by  $\mathbb{Q}^{n}_{c}$ the sphere  $\mathbb{S}^{n}$, if  $c=1$ and the hyperbolic space  $\mathbb{H}^{n}$, if  $c=-1$.

 The rotational hypersurfaces in  $\QR$ were parametrized by  F. Dillen, J. Fastenakels  and  J. Van Der Veken in \cite{rot} where they extended the work  of  M.P. do Carmo and  M. Dajczer   \cite{rotM} about rotational hypersurfaces in space forms.

The hypersurfaces in  $\QR$ having a special field  $T$ as a principal direction were locally classified by R. Tojeiro in  \cite{RT1}.
 The differentiable field  $T$ and the differentiable function  $\nu$ are defined by the equation   $$\dt=df(T)+\nu\eta,$$
 where  $f$ is an immersion of a Riemannian $n$-dimensional manifold $M^{n}$ in  $\QR$ with unit normal vector field  $\eta$ and  $\dt$ is an unitary vector field  tangent to  $\mathbb{R}$. More particularly, the hypersurfaces with   constant angle, i.e., the hypersurfaces with constant function $\nu$, were also classified  in  \cite{RT1}.

  As we can observe in \cite[Proposition 4]{RT1},  the hypersurfaces in  $\QR$ has flat normal bundle as an isometric immersion into $\E$ if and only if $T$ is a principal direction of $f$.

In \cite{Manfio}, R. Tojeiro and F. Manfio classified locally the hypersurfaces in $\mathbb{Q}^{n}_{c} \times \mathbb{R}$, $n~\geq~3$, with constant sectional curvature.

Motivated by these results, in this paper  we  investigate hypersurfaces $\f$ with constant principal curvatures.

 It is well known that a hypersurface in a space form is isoparametric  if and only if its principal curvatures are constant but this does not happen in other ambients, in general.  For instance in \cite{Wang} one can find examples  of isoparametric hypersurfaces in complex projective spaces  that do not  have constant principal curvatures. See also G. Thorbergsson \cite{Survey}.

 In order to analyze if, for hypersurfaces in $\QR$, is true the equivalence between to be isoparametric and to have constant principal curvatures, we obtain a necessary and sufficient condition presented in Theorem \ref{di} for such equivalence to occur in hypersurfaces that have $ T $ as a principal direction. For this, we prove in Theorem \ref{erdp}, the existence of a local frame of differentiable principal directions, a result that has been used previously in the literature. We consider a hypersurface that has the field  $ T $ as a principal direction, construct its family of parallel hypersurfaces  and relate their respective principal curvatures.

The main  purpose of this work is the classification given by Theorem \ref{teoclass}, of hypersurfaces of $\QR$, $n\neq3$, with $g$ distinct constant principal curvatures, $g\in\{1,2,3\}$. Initially, we obtain in Theorem \ref{ctdp}, the classification of hypersurfaces in $\QR$ that have constant principal curvatures  contained in the class  having $ T $ as a principal direction. We prove some results related to the multiplicities of such curvatures such as  Theorem \ref{m1} and Proposition \ref{dp} and finally we  obtain Theorem \ref{teoclass}.

\section{Preliminaries}\label{sec:preliminaries}

Let $\mathbb{Q}^{n}_{c}$ denotes either the sphere $\mathbb{S}^{n}$ or hyperbolic space $\mathbb{H}^{n}$, according as $c=1$ or $c=-1$, respectively.
We consider $$\Q^{n}_{c}=\{(x_{1},\ldots,x_{n+1}) \in \mathbb{E}^{n+1} /cx_{1}^{2}+x_{2}^{2}+\cdots+x_{n+1}^{2}=c\},$$ with $x_{1}>0$ if $c=-1$ and  $$\mathbb{E}^{n+1}=\{(x_{1},\ldots,x_{n+2}) \in \E /x_{n+2}=0\},$$ where
 we  denote by $\E$ either the  Euclidean space $\R^{n+2}$ or the Lorentzian space $\mathbb{L}^{n+2}$ of dimension (n+2), according as  $c=1$ or $c=-1$, respectively.  Here  $(x_{1}, . . . , x_{n+2})$ are the standard coordinates on $\E$  and the flat metric $\langle \,,\,\rangle$  in those coordinates is written as $ds^{2}= c\,dx_{1}^{2}+ \ldots+ dx_{n+2}^{2}$.

 Given a hypersurface $\f $, let $\eta$ denote a unit vector field  normal to $f$ and let $\dt$ denote  a unit vector field tangent to the second factor $\R$. We define the differentiable vector field $T\in TM^{n}$ and a smooth function $\nu$ on $M^{n}$ by
\begin{equation}\label{eqtv}
\dt= df(T)+ \nu \eta.
\end{equation}
Since $\dt$ is a unit vector field, we have
\begin{equation}\label{vt1}
\nu^{2}+\|T\|^{2}=1.
\end{equation}

Let $\nabla$  be the Levi-Civita connection, $R$ be the curvature tensor of $M^{n}$ and let $A$ be the shape operator of $f$ with respect to $\eta$. The fact that $\dt$ is parallel in $\QR$ yields for all $X\in TM^{n}$ that
\begin{eqnarray}
\label{eq03}
\nabla _{X}T=\nu AX \;\;\;\;\textrm{ and }\\
\label{eq04}
X(\nu)=-\langle AX,T\rangle.
\end{eqnarray}
Moreover, the Gauss and Codazzi equations are
\begin{equation}\label{eq01}
R(X,Y)Z= (AX\wedge AY)Z+ c ((X\wedge Y)Z-\langle Y,T\rangle(X \wedge T)Z+\langle X,T\rangle(Y\wedge T)Z),
\end{equation}
and
\begin{equation}\label{eq02}
(\nabla_{X}A)Y-(\nabla_{Y}A)X=c\nu (X\wedge Y)T,
\end{equation}
respectively, where $ (X\wedge Y)Z=\langle Y,Z\rangle X-\langle X,Z\rangle Y$ and $X, Y, Z \in TM^{n}$.
\begin{remark}\label{obsvctd}
By (\ref{eq04}), if $\nu$ is constant and $T\neq0$, then $T$ is a principal direction and the  principal curvature associated to it  is equal to $0$.
\end{remark}

 Consider  $F:M^{n}\rightarrow \E$ given by $F:=i \circ f$ where $i:\QR \rightarrow \E$ is the inclusion map  whose unit normal  field $\xi$  satisfies $\langle \xi,\xi\rangle=c$. If $A_{\xi}$ is the Weingarten operator of the immersion $F$ with respect to the normal direction $\xi$, we obtain $A_{\xi}(T)=-\nu^{2}T$ and $A_{\xi}(X)=-X,$ for all $X \in [T]^{\bot}$ where $[T]^{\bot}=\{X \in TM^{n}/\langle X, T\rangle=0\}$. Let $\widehat{\nabla}$ denote the Riemannian connection of $\mathbb{E}^{n+2}$.

\begin{proposition} The following equalities hold for all $X\in TM^{n}$,
 \begin{equation}\label{eqobs1}
\widehat{\nabla}_{X}\xi=df(X)-\langle X,T\rangle\dt,
\end{equation}
\begin{equation}\label{eqobs2}
 \nabla^{\perp}_{X}\xi=-\nu\langle X,T\rangle\eta,
 \end{equation}
 \begin{equation}\label{eqobs3}
  \nabla^{\perp}_{X}\eta=c\nu\langle X,T\rangle\xi.
  \end{equation}
\end{proposition}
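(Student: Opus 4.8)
The three identities relate the Euclidean (or Lorentzian) ambient connection $\widehat{\nabla}$ to the intrinsic data of $f$, and the chain of inclusions $M^n \xrightarrow{f} \Q^n_c\times\R \xrightarrow{i} \E$ is what makes them computable. The plan is to differentiate the defining relations of $\xi$ and $\eta$ along $M^n$, using that $F = i\circ f$, that $\xi$ is the position-type normal of $\Q^n_c\times\R$ inside $\E$ (so $\widehat{\nabla}_Y\xi$ recovers, up to a correction in the $\dt$ direction, the tangential push-forward), and that $\eta$ is normal to $f$ inside $\Q^n_c\times\R$. Throughout I would freely use \eqref{eqtv}, \eqref{vt1}, \eqref{eq03}, \eqref{eq04}, and the stated formulas for $A_\xi$, namely $A_\xi(T) = -\nu^2 T$ and $A_\xi(X) = -X$ for $X\in[T]^\perp$.

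**Proof of \eqref{eqobs1}.** First decompose an arbitrary $X\in TM^n$ as $X = \langle X,T\rangle\,\|T\|^{-2}\,T + X'$ with $X'\in[T]^\perp$ (or, more cleanly, just treat the cases $X = T$ and $X\in[T]^\perp$ and extend by linearity, avoiding division). The ambient derivative $\widehat{\nabla}_X\xi$ splits into a part tangent to $\Q^n_c\times\R$ and a normal part; the normal part is governed by the Weingarten operator $A_\xi$ of $F$, so $\widehat{\nabla}_X\xi = -A_\xi X + \nabla^\perp_X\xi$ where here $-A_\xi X$ is a vector tangent to $\Q^n_c\times\R$. But $\Q^n_c\times\R$ is totally geodesic in no sense relevant; rather $-A_\xi X$ is precisely the $df$-image of the shape operator of $F$ \emph{as a hypersurface of $\Q^n_c\times\R$ composed with the second fundamental form of $\Q^n_c\times\R$ in $\E$}. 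The clean way: since $A_\xi X$ is the tangential-to-$\Q^n_c\times\R$ component of $-\widehat{\nabla}_X\xi$, and $\xi$ is normal to $\Q^n_c\times\R$, one has $\widehat{\nabla}_X\xi = -df(A_\xi X)_{\text{as vector in }\Q^n_c\times\R}$-type expression; plugging $A_\xi(T)=-\nu^2T$ and $A_\xi|_{[T]^\perp}=-\mathrm{id}$ and then re-expressing $\nu^2 T$ via $\nu^2 = 1-\|T\|^2$ and $\dt = df(T)+\nu\eta$ should collapse everything to $df(X) - \langle X,T\rangle\,\dt$. Concretely: for $X\in[T]^\perp$, $\widehat{\nabla}_X\xi = df(X)$ and $\langle X,T\rangle = 0$, consistent; for $X=T$, one needs $\widehat{\nabla}_T\xi = df(T) - \|T\|^2\,\dt = \nu^2\,df(T) - \nu\|T\|^2\eta$, which should match $-A_\xi(T)$ pushed into $\E$ after accounting for the $\dt$-component of $\xi$'s variation — this bookkeeping of the $\dt$ versus $\eta$ directions is the one genuinely delicate point.

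**Proof of \eqref{eqobs2} and \eqref{eqobs3}.** Once \eqref{eqobs1} is in hand, \eqref{eqobs2} follows by taking the $\eta$-component: $\nabla^\perp_X\xi = \langle \widehat{\nabla}_X\xi,\eta\rangle\,\eta + (\text{comp.\ along }\xi)$, and the $\xi$-component vanishes since $\langle\xi,\xi\rangle = c$ is constant. From \eqref{eqobs1}, $\langle \widehat{\nabla}_X\xi,\eta\rangle = \langle df(X),\eta\rangle - \langle X,T\rangle\langle\dt,\eta\rangle = 0 - \langle X,T\rangle\,\nu$ using \eqref{eqtv}, which gives \eqref{eqobs2}. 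For \eqref{eqobs3}, differentiate the orthogonality $\langle \eta,\xi\rangle = 0$: $0 = \langle\widehat{\nabla}_X\eta,\xi\rangle + \langle\eta,\widehat{\nabla}_X\xi\rangle$, so $\langle\widehat{\nabla}_X\eta,\xi\rangle = -\langle\eta,\widehat{\nabla}_X\xi\rangle = \nu\langle X,T\rangle$; since $\nabla^\perp_X\eta$ is the component of $\widehat{\nabla}_X\eta$ normal to $\Q^n_c\times\R$, i.e.\ along $\xi$, and $\langle\xi,\xi\rangle = c$, we get $\nabla^\perp_X\eta = c\,\langle\widehat{\nabla}_X\eta,\xi\rangle\,\xi = c\nu\langle X,T\rangle\,\xi$, which is \eqref{eqobs3}. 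The main obstacle is thus isolated entirely in \eqref{eqobs1}: correctly identifying how the second fundamental form of $\Q^n_c\times\R\subset\E$ interacts with the decomposition $\dt = df(T)+\nu\eta$, so that the $A_\xi$ eigenvalues $-\nu^2$ and $-1$ reassemble into the uniform expression $df(X)-\langle X,T\rangle\dt$; after that, \eqref{eqobs2} and \eqref{eqobs3} are immediate consequences of metric compatibility and the constancy of $\langle\xi,\xi\rangle$ and $\langle\eta,\xi\rangle$.
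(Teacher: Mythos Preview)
The paper states this proposition without proof, so there is no argument to compare against; I evaluate your approach on its own merits. Your derivations of \eqref{eqobs2} and \eqref{eqobs3} from \eqref{eqobs1} are correct and clean: projecting \eqref{eqobs1} onto $\eta$ gives \eqref{eqobs2} directly, and differentiating $\langle\eta,\xi\rangle=0$ together with $\langle\eta,\eta\rangle=1$ (so $\nabla^\perp_X\eta$ is purely along $\xi$) yields \eqref{eqobs3}.

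The gap is in your argument for \eqref{eqobs1}. You try to reconstruct $\widehat\nabla_X\xi$ from the Weingarten data $A_\xi$, but the Weingarten formula $\widehat\nabla_X\xi=-dF(A_\xi X)+\nabla^\perp_X\xi$ only gives you the $TM^n$-component; the $\eta$-component is precisely the content of \eqref{eqobs2}, which you then want to \emph{derive} from \eqref{eqobs1}. So as written the argument is circular, and your ``bookkeeping'' paragraph never closes the loop on the $\eta$-part independently. The fix is to bypass $A_\xi$ entirely and compute $\widehat\nabla_X\xi$ directly from the explicit description $\xi\circ f=(\pi_1\circ f,0)$ (which the paper records in Section~4): differentiating in $\E$ gives $\widehat\nabla_X\xi=d(\pi_1\circ f)(X)$, i.e.\ the $\mathbb E^{n+1}$-component of $dF(X)$, which equals $df(X)-\langle df(X),\dt\rangle\,\dt=df(X)-\langle X,T\rangle\,\dt$ since $\langle df(X),\dt\rangle=\langle df(X),df(T)+\nu\eta\rangle=\langle X,T\rangle$. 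This one-line computation establishes \eqref{eqobs1} outright, after which your arguments for \eqref{eqobs2} and \eqref{eqobs3} go through verbatim. (Incidentally, the $A_\xi$ formulas you invoke are most naturally \emph{consequences} of \eqref{eqobs1}, not inputs to it.)
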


Two trivial classes of hypersurfaces of $\QR$ arise if either $T$ or $\nu$ vanishes identically.  Both classes will appear in our results.

\begin{proposition}\label{triviais}\cite[Proposition 1]{Manfio}
 Let $\f$ be a hypersurface.\\
(i) If $T$ vanishes identically, then $f(M^{n})$ is an open subset of a slice $\mathbb{Q}^{n}_{c}\times \{t\}$.\\
(ii) If $\nu$ vanishes identically, then $f(M^{n})$ is an open subset of a Riemannian product $M^{n-1}\times \R$, where $M^{n-1}$ is a hypersurface of $\Q^{n}_{c}$.
\end{proposition}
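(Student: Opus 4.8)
The plan is to extract both conclusions directly from the compatibility equations (\ref{eqtv})--(\ref{eq03}), handling the two degenerate cases separately; throughout I may assume $M^{n}$ connected, since the statement concerns open subsets. For part (i), assume $T\equiv 0$. Then (\ref{vt1}) gives $\nu^{2}\equiv 1$, so $\nu$ is the constant $\pm 1$ and (\ref{eqtv}) degenerates to $\dt=\nu\eta$, i.e.\ $\eta=\pm\,\dt$ along $f$. Hence, for the projection $\pi\colon\QR\to\R$ and every $X\in TM^{n}$, one gets $d(\pi\circ f)(X)=\langle df(X),\dt\rangle=\pm\langle df(X),\eta\rangle=0$, so $\pi\circ f\equiv t_{0}$ for some $t_{0}\in\R$. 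Thus $f(M^{n})\subset\mathbb{Q}^{n}_{c}\times\{t_{0}\}$, and since $f$ is then an immersion into an $n$-dimensional slice it is a local diffeomorphism, hence an open map, which yields the claim. (One may note in passing that (\ref{eq03}) also forces $A\equiv 0$.)

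For part (ii), assume $\nu\equiv 0$. Then (\ref{vt1}) gives $\|T\|\equiv 1$ and (\ref{eq03}) gives $\nabla_{X}T=0$ for all $X$, so $T$ is a unit parallel vector field on $M^{n}$. By the local de Rham decomposition, $M^{n}$ splits isometrically, around each point, as a Riemannian product $M^{n-1}\times\R$ in which $\partial/\partial s$ corresponds to $T$ and the factors $M^{n-1}\times\{s\}$ are the (totally geodesic) leaves of $[T]^{\perp}$. To see that $f$ carries this splitting to the product splitting of the ambient, integrate (\ref{eqtv}) along an integral curve $s\mapsto(q,s)$ of $T$: since $\tfrac{d}{ds}f(q,s)=df(T)=\dt$ is the constant unit vertical field, the $\mathbb{Q}^{n}_{c}$-component of $f(q,s)$ is independent of $s$, say equal to $h(q)$; and $d(\pi\circ f)$ vanishes on each leaf $M^{n-1}\times\{s\}$ because $d(\pi\circ f)(X)=\langle df(X),df(T)\rangle=\langle X,T\rangle=0$ for $X\in[T]^{\perp}$, so $h\colon M^{n-1}\to\mathbb{Q}^{n}_{c}$ is an immersion. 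Writing $f(q,s)=(h(q),\,s+a(q))$ with $a$ smooth and letting $s$ range, we obtain $f(M^{n})\subset h(M^{n-1})\times\R$, which locally is an open subset of the Riemannian product of a hypersurface of $\mathbb{Q}^{n}_{c}$ with $\R$.

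The individual computations here are all one line; the only step that deserves genuine care is the one highlighted in part (ii), namely that the parallel splitting of the \emph{domain} induced by the parallel field $T$ is carried by $f$ onto the product splitting of the \emph{ambient}. This is exactly why one must integrate (\ref{eqtv}) along the flow lines of $T$ and, separately, check that $\pi\circ f$ is constant on the $[T]^{\perp}$-leaves; once these two facts are in hand, identifying the two product structures is immediate. A minor, purely bookkeeping, point is the passage from this local picture to the ``open subset of'' phrasing of the statement, which is taken care of by the connectedness and locality reduction made at the outset.
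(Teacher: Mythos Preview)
The paper does not give its own proof of this proposition; it is quoted verbatim from \cite{Manfio} and used as a black box. So there is no argument in the paper to compare yours against.

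Your proof is correct and is essentially the standard one. Two cosmetic remarks on part~(ii): first, the clause ``so $h\colon M^{n-1}\to\mathbb{Q}^{n}_{c}$ is an immersion'' is placed immediately after the observation that $d(\pi\circ f)$ vanishes on the leaves, but the actual reason $h$ is an immersion is that for $X\in[T]^{\perp}$ one has $df(X)=(dh(X),0)$ and $f$ is an immersion---you might make that link explicit. Second, the same computation $d(\pi\circ f)(X)=0$ for $X\in[T]^{\perp}$ shows that your function $a(q)$ is in fact locally constant, so $f(q,s)=(h(q),s+a_{0})$; this is stronger than what you wrote and makes the identification with $M^{n-1}\times\mathbb{R}$ cleaner, though your weaker conclusion $f(M^{n})\subset h(M^{n-1})\times\mathbb{R}$ already suffices for the statement.
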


 In \cite{totgeod}  one can find a theorem that classifies the totally geodesic hypersurface of $\mathbb{S}^{n}\times \mathbb{R}$. The same result also holds for $\mathbb{H}^{n}\times \mathbb{R}$.

\begin{theorem}\label{totgd}\cite[Theorem 3] {totgeod} Let $M^{n}$ be a totally geodesic hypersurface of $\mathbb{S}^{n}\times \mathbb{R}$. Then $M^{n}$
is an open part of a hypersurface $S^{n}\times\{t_{0}\}$ for $t_{0}\in \mathbb{R}$, or of a hypersurface $S^{n-1}\times\mathbb{R}$.
 \end{theorem}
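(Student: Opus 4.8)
The plan is to combine the structure equations (\ref{eq03}), (\ref{eq04}) and (\ref{eq02}) with the trivial‑class dichotomy of Proposition \ref{triviais} and the classical description of totally geodesic hypersurfaces of the sphere. Since $M^{n}$ is totally geodesic, its shape operator vanishes, $A\equiv 0$. I would first substitute this into (\ref{eq04}) to get $X(\nu)=-\langle AX,T\rangle=0$ for all $X\in TM^{n}$, so that $\nu$ is constant; then (\ref{eq03}) gives $\nabla_{X}T=\nu AX=0$, so $T$ is parallel and, by (\ref{vt1}), $\|T\|$ is constant as well.

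The crux --- and the step I expect to be the main obstacle --- is to eliminate the ``mixed'' possibility that $\nu$ is a nonzero constant with $0<|\nu|<1$. For this I would plug $A\equiv 0$ into the Codazzi equation (\ref{eq02}): the left‑hand side vanishes, leaving $c\nu\,(X\wedge Y)T=0$ for all $X,Y\in TM^{n}$. Choosing $X=T$ and a nonzero $Y\in[T]^{\bot}$ (which exists since $n\ge 2$) and using $(T\wedge Y)T=\langle Y,T\rangle T-\|T\|^{2}Y=-\|T\|^{2}Y$, one obtains $c\nu\|T\|^{2}=0$; since $c=\pm 1$ and $\|T\|^{2}=1-\nu^{2}$, this forces $\nu\equiv 0$ or $T\equiv 0$. (It is exactly here that the hypothesis $n\ge 2$ is used: for $n=1$ the mixed case does occur, e.g. a geodesic helix in the flat cylinder $\mathbb{S}^{1}\times\mathbb{R}$.)

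If $T\equiv 0$, Proposition \ref{triviais}(i) gives at once that $f(M^{n})$ is an open part of a slice $S^{n}\times\{t_{0}\}$. If instead $\nu\equiv 0$, Proposition \ref{triviais}(ii) shows $f(M^{n})$ is an open part of a Riemannian product $M^{n-1}\times\mathbb{R}$ with $M^{n-1}$ a hypersurface of $S^{n}$; since $\dt$ is parallel, the second fundamental form of this product immersion coincides with that of $M^{n-1}\hookrightarrow S^{n}$, so $A\equiv 0$ forces $M^{n-1}$ to be totally geodesic in $S^{n}$, hence an open part of a great hypersphere $S^{n-1}$, which is the remaining alternative. The argument for $\mathbb{H}^{n}\times\mathbb{R}$ is verbatim the same, with $c=-1$ throughout, so that one reaches the analogous conclusion with $\mathbb{H}^{n}$ and $\mathbb{H}^{n-1}$ in place of $S^{n}$ and $S^{n-1}$.
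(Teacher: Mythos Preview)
Your argument is correct. Note, however, that the paper does not supply its own proof of this statement: Theorem~\ref{totgd} is quoted from \cite{totgeod} (Van der Veken--Vrancken) and is used only as background. So there is no ``paper's proof'' to compare against here.

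That said, your proof is self-contained within the paper's framework and is worth keeping: you use only (\ref{vt1})--(\ref{eq02}) and Proposition~\ref{triviais}. The key step---plugging $A\equiv 0$ into Codazzi and evaluating $(T\wedge Y)T=-\|T\|^{2}Y$ to force $\nu\|T\|^{2}=0$---is clean, and your remark that constancy of $\nu$ (hence of $\|T\|$) makes the dichotomy $T\equiv 0$ versus $\nu\equiv 0$ global is exactly what is needed. The identification of $M^{n-1}$ as a great $S^{n-1}$ in the $\nu\equiv 0$ case is also correct: for a product immersion $M^{n-1}\times\mathbb{R}\hookrightarrow \mathbb{Q}^{n}_{c}\times\mathbb{R}$ the normal $\eta$ is tangent to the $\mathbb{Q}^{n}_{c}$ factor and the shape operator restricted to $TM^{n-1}$ is that of $M^{n-1}\subset\mathbb{Q}^{n}_{c}$, so $A\equiv 0$ forces $M^{n-1}$ to be totally geodesic there. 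Your parenthetical about $n=1$ and helices on $\mathbb{S}^{1}\times\mathbb{R}$ is a nice sanity check for where $n\geq 2$ enters.
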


\section{Existence of a frame of principal directions}

 In order to prove the results of the next sections, we will need the following theorem  based on results that can be found in \cite[Theorem 2.6]{Md} and  \cite{Nomizu}. It is very important since it shows the existence of a local frame of differentiable principal directions.

\begin{theorem}\label{erdp} Let  $A$ be a symmetric tensor of type  $(1,1)$ in an oriented  Riemaniann manifold  $M^{n}$, $n\geq 2$, with  $g$ distinct eigenvalues  $\lambda_{1},\ldots,\lambda_{g}$ having constant multiplicities $m_{1},\ldots,m_{g}$, respectively. Then for each point  $p \in M$ there exist an orthonormal frame of differentiable eigenvalues $\{X_{1},\ldots, X_{n}\}$ defined in a neighborhood $U$ of  $p$ in $M$.
\end{theorem}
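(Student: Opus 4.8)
The plan is to build the frame locally by a combination of (i) showing each eigenbundle $E_i=\ker(A-\lambda_i I)$ is a smooth distribution of rank $m_i$, and (ii) producing a smooth orthonormal frame inside each $E_i$ and then assembling these into a frame of $TM^n$ on a neighborhood of $p$. The hypothesis that the multiplicities $m_1,\dots,m_g$ are constant is exactly what makes each $E_i$ a subbundle of constant rank, so the main content is smoothness of these subbundles together with smoothness of a local section frame.

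For the smoothness of the eigenbundles I would argue as follows. Near $p$ the $g$ distinct eigenvalues $\lambda_1,\dots,\lambda_g$ are themselves smooth functions: since they are simple roots of the characteristic polynomial $\det(A-\lambda I)$ whose coefficients are smooth functions on $M^n$, and the roots stay distinct on a neighborhood $U$ of $p$ by continuity, the implicit function theorem applied to $\det(A-\lambda I)=0$ (here $\partial_\lambda\det(A-\lambda I)\neq0$ at a simple root) gives smooth $\lambda_i\colon U\to\R$. Then consider the smooth spectral projection
\begin{equation}\label{eq:spectralproj}
P_i=\prod_{j\neq i}\frac{A-\lambda_j I}{\lambda_i-\lambda_j},
\end{equation}
which is a smooth $(1,1)$-tensor field on $U$; because $A$ is symmetric and has exactly the eigenvalues $\lambda_j$ with the stated constant multiplicities, $P_i$ is pointwise the orthogonal projection onto $E_i$, hence each $E_i=\operatorname{Im}P_i$ is a smooth subbundle of $TU$ of rank $m_i$, and $TU=E_1\oplus\dots\oplus E_g$ is a smooth orthogonal splitting.

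It remains to pick a smooth orthonormal frame in each $E_i$ near $p$. For this, start with any local smooth frame of the subbundle $E_i$ — e.g. choose vectors $v_1,\dots,v_{m_i}\in E_i|_p$, extend them to arbitrary smooth local vector fields, apply $P_i$ to each, and note that on a possibly smaller neighborhood these $m_i$ sections $P_iv_1,\dots,P_iv_{m_i}$ remain pointwise linearly independent — and then apply the Gram–Schmidt process, which depends smoothly on the input since all the inner products involved are smooth and nonzero near $p$. Doing this for every $i$ and concatenating yields an orthonormal frame $\{X_1,\dots,X_n\}$ of smooth eigenvector fields on a neighborhood $U$ of $p$, with $AX_k=\lambda_{i(k)}X_k$. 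The only delicate point — and the place where the constant-multiplicity hypothesis is essential — is that without it the rank of $P_i$ could jump and the $E_i$ would fail to be subbundles; granting constant multiplicities, every step above is a routine application of the implicit function theorem, smoothness of algebraic operations, and Gram–Schmidt, so I expect no serious obstacle beyond organizing these ingredients carefully. (Alternatively, one can invoke \cite[Theorem 2.6]{Md} directly once the eigenbundles are known to be smooth; I would nonetheless include the projection argument for completeness.)
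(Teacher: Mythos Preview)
Your overall strategy coincides with the paper's: both arguments apply the ``Lagrange interpolation'' operator $\prod_{j\neq i}(A-\lambda_j I)$ to an initial smooth frame to produce smooth eigenvector fields, then invoke Gram--Schmidt. The paper works with the unnormalized product and checks via Cayley--Hamilton that the outputs lie in the correct eigenspace, while you use the normalized projection $P_i$; this difference is cosmetic.

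There is, however, a genuine gap in your smoothness-of-eigenvalues step. You write that the $\lambda_i$ ``are simple roots of the characteristic polynomial $\det(A-\lambda I)$'' and apply the implicit function theorem using $\partial_\lambda\det(A-\lambda I)\neq 0$. This is false whenever $m_i\geq 2$: the eigenvalue $\lambda_i$ is a root of multiplicity $m_i$ of the characteristic polynomial, so $\partial_\lambda\det(A-\lambda I)$ vanishes there and the implicit function theorem does not apply as stated. The constant-multiplicity hypothesis does guarantee smoothness of the $\lambda_i$, but this requires a separate argument---the paper simply cites Nomizu \cite{Nomizu} for it. If you want to keep the argument self-contained, one clean route is to define $P_i$ first via the resolvent integral $P_i=\frac{1}{2\pi i}\oint_{\gamma_i}(zI-A)^{-1}\,dz$ around a small circle $\gamma_i$ isolating $\lambda_i(p)$; this is manifestly smooth in $x$, has constant rank $m_i$ by continuity of eigenvalues, and then $\lambda_i=\frac{1}{m_i}\operatorname{tr}(AP_i)$ is smooth a posteriori. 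Either way, the claim needs more than the bare implicit function theorem on $\det(A-\lambda I)$.
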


\begin{proof} Without loss of generality we may suppose $\lambda_{1}>\lambda_{2}>\ldots>\lambda_{g}$. Let us consider the $g$ orthogonal distributions $D_{\lambda_{i}}$ with $i \in \{1,\ldots,g\}$ defined by  $$D_{\lambda_{i}}(p)=\{Y_{p}\in T_{p}M; AY_{p}=\lambda_{i} Y_{p}\}.$$
 Given $p \in M$ let $U'$ be a neighborhood  of $p$ in  $M$ where are defined the  differentiable fields  $Y_{1},\ldots,Y_{n}$ such that
 $$\{Y_{1}^{1}(p),\ldots,\; Y_{m_{1}}^{1}(p)\}\;\;  \textrm{span the distribution}\;\; D_{\lambda_{1}}(p),$$
 $$\{Y_{m_{1}+1}^{2}(p),\ldots,Y_{m_{1}+m_{2}}^{2}(p)\}\;\; \textrm{span the distribution} \;\;  D_{\lambda_{2}}(p), \cdots,$$
 $$\{Y_{m_{1}+\cdots+m_{g-1}+1}^{g}(p),\ldots,Y_{n}^{g}(p)\}\;\; \textrm{span the distribution} \;\; D_{\lambda_{g}}(p).$$

Let us define  $X_{1},\ldots,X_{n}$ in  $U'$ by  $$X_{i}^{1}(x)=(A(x)-\lambda_{2} I)\ldots(A(x)-\lambda_{g} I)Y_{i}^{1}(x), \; \textrm{for}\; i\in\{1,\ldots,m_{1}\},$$ $$X_{i}^{2}(x)=-(A(x)-\lambda_{1} I)(A(x)-\lambda_{3} I)\ldots(A(x)-\lambda_{g} I)Y_{i}^{2}(x), \; \textrm{for}\; i\in\{m_{1}+1,\ldots,m_{1}+m_{2}\}$$ and $$X_{i}^{k}(x)=(-1)^{k-1}\prod_{j\neq k}(A(x)-\lambda_{j} I)Y_{i}^{k}(x),$$ for each  $k\in \{3,\ldots,g\}$ and  $i\in\{m_{1}+\cdots+m_{k-1}+1,\ldots,m_{1}+\cdots+m_{k}\}$, with $j\in\{1,\ldots,g\}$, where $I$ is the identity matrix  of ordem $n$ and  $x\in U'$.

 Observe that those fields depend on $x$ in a differentiable form. This happens because  the eingenvalues have constant multiplicity and so they are differentiable \cite{Nomizu}. Moreover, as  $\{X_{1}(p),\ldots,X_{n}(p)\}$ are linearly independent  then  $\{X_{1}(x),\ldots,X_{n}(x)\}$ are linearly independent  for all  $x$ in a neighborhood $U\subset U'$ of  $p$ in  $M$. Observe also that for each  $x\in U$ the basis  $\{X_{1}(x),\ldots,X_{n}(x)\}$ is  positive since it has the same orientation as the basis $\{Y_{1}(x),\ldots,Y_{n}(x)\}$.

The characteristic polinomium  of the operator  $A(x)$ is given by   $$p(t,x)=(t-\lambda_{1})^{m_{1}}(t-\lambda_{2})^{m_{2}}\ldots(t-\lambda_{g})^{m_{g}}$$ and by Cayley-Hamilton theorem it follows that  $$(A(x)-\lambda_{1} I)(A(x)-\lambda_{2}I)\ldots(A(x)-\lambda_{g}I)=0,$$ for each  $x\in U$. Then  $(A(x)-\lambda_{1} I)(A(x)-\lambda_{2}I)\ldots(A(x)-\lambda_{g}I)Y_{i}(x)=0$, for all  $i~\in~ \{1,\ldots,n\}$ and  $x \in U$. In this way for all
 $x \in U$, $$(A(x)-\lambda_{1} I)X_{i}^{1}(x)=(A(x)-\lambda_{1} I)(A(x)-\lambda_{2}I)\ldots (A(x)-\lambda_{g}I)Y_{i}^{1}(x)=0,$$ for all $i\in\{1,\ldots m_{1}\}$,  $$(A(x)-\lambda_{2} I)X_{i}^{2}(x)=-(A(x)-\lambda_{1} I)(A(x)-\lambda_{2}I)\ldots(A(x)-\lambda_{g}I)Y_{i}^{2}(x)=0,$$ for all $i\in\{m_{1}+1,\ldots,m_{1}+m_{2}\},$ $$(A(x)-\lambda_{k} I)X_{i}^{k}(x)=(-1)^{k-1}(A(x)-\lambda_{1} I)(A(x)-\lambda_{2}I)\ldots(A(x)-\lambda_{g}I)Y_{i}^{k}(x)=0,$$ for all  $i\in\{m_{1}+\cdots+m_{k-1}+1,\ldots,m_{1}+\cdots+m_{k}\}$. Then, for each $x\in U$, we have that $A(x)X_{i}^{k}(x)=\lambda_{k} X_{i}^{k}(x)$,  for all $i\in\{1,\ldots,n\}$ and $k \in \{1,\ldots,g\}$.

  By the  Gram-Schmidt orthogonalization process we obtain orthonormalized sets $\{X_{1}^{1}(x),\linebreak\ldots,X_{m_{1}}^{1}(x)\}$,  $\{X_{m_{1}+1}^{2}(x),\ldots,X_{m_{1}+m_{2}}^{2}(x)\}$ and $\{X_{m_{1}+\cdots+m_{k-1}+1}^{k}(x),\ldots, X_{m_{1}+\cdots+m_{k}}^{k}(x)\}$, for all $x\in U$ and  $k~\in~\{3,\ldots,g\}$. Moreover, let $X_{i}^{\alpha}$ and  $X_{j}^{\beta}$ for  $i,j \in \{1,\ldots,n\}$ and  $\alpha,\beta\in \{1,\ldots,g\}$ with  $\alpha\neq\beta$. Then,
$$
\begin{array}{rcl}
(\lambda_{\alpha}-\lambda_{\beta})\langle X_{i}^{\alpha}, X_{j}^{\beta}\rangle &=& \langle \lambda_{\alpha} X_{i}^{\alpha}, X_{j}^{\beta}\rangle-\langle X_{i}^{\alpha}, \lambda_{\beta} X_{j}^{\beta}\rangle=\langle AX_{i}^{\alpha}, X_{j}^{\beta}\rangle-\langle X_{i}^{\alpha}, AX_{j}^{\beta}\rangle
\\&=& \langle AX_{i}^{\alpha}-AX_{i}^{\alpha}, X_{j}^{\beta}\rangle=0.
\end{array}
$$
Since $ \lambda_{\alpha} \neq \lambda_{\beta}$, for $\alpha \neq \beta$ we get   $\langle X_{i}^{\alpha}, X_{j}^{\beta}\rangle=0$, for $i,j \in \{1,\ldots,n\}$ and $\alpha,\beta\in \{1,\ldots,g\}$,  $\alpha\neq\beta$.
So we obtain a local orthonormal frame of differentiable eigenvalues $\{X_{1},\ldots,X_{n}\}$.
\end{proof}

This  leads  to the following result.

\begin{corollary}Let $\f$, $n\geq 2$, be a hypersurface  having $g\geq2$ distinct principal curvatures $\lambda_{1},\ldots,\lambda_{g}$, with constant multiplicities $m_{1},\ldots,m_{g}$, respectively.
Then for each  $p \in M$, there exist an orthonormal frame of principal directions  $\{X_{1},\ldots, X_{n}\}$ in a neighborhood  $U$ of $p$ in $M$.
\end{corollary}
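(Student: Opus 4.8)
The plan is to obtain this as an immediate consequence of Theorem \ref{erdp} applied to the shape operator. The key observation is that the shape operator $A$ of $f$ with respect to $\eta$ is a symmetric tensor of type $(1,1)$ on $M^{n}$, whose eigenvalues at each point are precisely the principal curvatures $\lambda_{1},\ldots,\lambda_{g}$; by hypothesis these are distinct and have constant multiplicities $m_{1},\ldots,m_{g}$ with $m_{1}+\cdots+m_{g}=n$. Thus $A$ satisfies all the hypotheses of Theorem \ref{erdp}, except possibly for the orientability requirement.

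First I would deal with orientability. Since it is a purely local statement, given $p\in M$ I would pass to a neighborhood $V$ of $p$ that is diffeomorphic to an open ball (hence orientable), fix an orientation on $V$, and apply Theorem \ref{erdp} to the restriction $A|_{V}$ on the oriented Riemannian manifold $V$. This produces a neighborhood $U\subseteq V$ of $p$ and an orthonormal frame $\{X_{1},\ldots,X_{n}\}$ of differentiable eigenvectors of $A$ defined on $U$.

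Finally, I would note that eigenvectors of the shape operator are by definition principal directions of $f$, so $\{X_{1},\ldots,X_{n}\}$ is the desired local orthonormal frame of principal directions, each $X_{i}$ corresponding to one of the $\lambda_{k}$ according to its multiplicity. Since there are no remaining steps, there is essentially no obstacle here: the entire content of the corollary lies in Theorem \ref{erdp}, and the only mild point of care is reducing to an orientable neighborhood so that the hypotheses of that theorem are literally met. (The assumption $g\geq 2$ is not needed for the argument, but it excludes the trivial umbilic case $g=1$, in which every orthonormal frame is already a frame of principal directions.)
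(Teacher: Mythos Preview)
Your proposal is correct and matches the paper's approach: the corollary is stated there as an immediate consequence of Theorem \ref{erdp} with no further proof, and your argument simply spells this out by applying that theorem to the shape operator. Your additional care in passing to an orientable neighborhood is a reasonable technical remark (since Theorem \ref{erdp} is stated for oriented manifolds), though the paper does not bother to mention it.
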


In the next proposition we obtain some equations that will be useful in this paper.

\begin{proposition}   Let $\f$  be a hypersurface having principal curvatures with  constant multiplicity. Let  $\{X_{1},\ldots,X_{n}\}$ be a frame of principal orthonormal directions and let  $\lambda_{i}$ be the principal curvature associated to  $X_{i}$. If  $T$ is a principal direction, $X_{n}=\|T\|^{-1}T$ and $\eta_{\mathbb{Q}}=\eta-\nu\dt$ then
 \begin{equation}\label{eqobs4}
    \widehat{\nabla}_{X_{i}}\eta_{\mathbb{Q}}=-\lambda_{i}df(X_{i})+c\nu\langle X_{i},T\rangle\xi-X_{i}(\nu)\dt,
    \end{equation}
\begin{equation}\label{eqobs5}
X_{i}(\|T\|)=0,\;\;\textrm{ for all}\;\;i\neq n,\;\;\textrm{ and }\;\;X_{n}(\|T\|)=\nu\lambda_{n},
\end{equation}
 \begin{equation}\label{eqobs6}
X_{i}(\nu)=0,\;\;\textrm{for all  }\;\;i\neq n,\;\;\textrm{ and }\;\;X_{n}(\nu)=-\lambda_{n}\|T\|,
\end{equation}
\begin{equation}\label{eqobs7}
X_{i}(\pi_{2}\circ f)=0,\;\; \textrm{ for all  }\;\; i\neq n\;\;\textrm{ and }\;\;X_{n}(\pi_{2}\circ f)=\|T\|.
\end{equation}

\end{proposition}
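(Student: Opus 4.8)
The plan is to derive each of the four identities directly from the structural equations (\ref{eq03}), (\ref{eq04}), (\ref{vt1}), from the Weingarten-type relations (\ref{eqobs1})--(\ref{eqobs3}) for $F=i\circ f$, and from the fact that $\dt$ is a constant — hence $\widehat{\nabla}$-parallel — vector field on $\E$. Throughout I will use that $T=\|T\|X_{n}$ with the frame orthonormal, so that $\langle X_{i},T\rangle=\|T\|\,\delta_{in}$, and that $\|T\|$ is nowhere zero on the domain where $X_{n}=\|T\|^{-1}T$ is defined.

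I would start with (\ref{eqobs6}): evaluating (\ref{eq04}) on $X=X_{i}$ and using $AX_{i}=\lambda_{i}X_{i}$ gives $X_{i}(\nu)=-\lambda_{i}\langle X_{i},T\rangle=-\lambda_{i}\|T\|\,\delta_{in}$, which is exactly the claim. For (\ref{eqobs5}) I would differentiate (\ref{vt1}) along $X_{i}$ to get $\nu X_{i}(\nu)+\|T\|X_{i}(\|T\|)=0$ and substitute (\ref{eqobs6}); alternatively, and without dividing by $\|T\|$, write $\nabla_{X_{i}}T=\nu AX_{i}=\nu\lambda_{i}X_{i}$ from (\ref{eq03}), expand $\nabla_{X_{i}}T=X_{i}(\|T\|)X_{n}+\|T\|\nabla_{X_{i}}X_{n}$, and take the inner product with $X_{n}$, using $\langle\nabla_{X_{i}}X_{n},X_{n}\rangle=0$. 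Either way $X_{i}(\|T\|)=\nu\lambda_{i}\delta_{in}$, which is (\ref{eqobs5}).

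For (\ref{eqobs7}) I would use that the $\R$-height function $\pi_{2}$ on $\QR$ has gradient $\dt$, so that $X_{i}(\pi_{2}\circ f)=\langle df(X_{i}),\dt\rangle$; by (\ref{eqtv}) and $\eta\perp df(TM^{n})$ this equals $\langle df(X_{i}),df(T)\rangle=\langle X_{i},T\rangle=\|T\|\,\delta_{in}$. Finally, for (\ref{eqobs4}) I would view $F=i\circ f$ as a codimension-two immersion into $\E$ with normal frame $\{\eta,\xi\}$; since $\eta$ is tangent to $\QR$ and is the unit normal of $f$ there, the shape operator of $F$ in the direction $\eta$ is $A$, so the Weingarten formula gives $\widehat{\nabla}_{X_{i}}\eta=-df(AX_{i})+\nabla^{\perp}_{X_{i}}\eta=-\lambda_{i}df(X_{i})+c\nu\langle X_{i},T\rangle\xi$ by (\ref{eqobs3}). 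Then, from $\eta_{\mathbb{Q}}=\eta-\nu\dt$ and $\widehat{\nabla}_{X_{i}}\dt=0$, we get $\widehat{\nabla}_{X_{i}}\eta_{\mathbb{Q}}=\widehat{\nabla}_{X_{i}}\eta-X_{i}(\nu)\dt$, which is (\ref{eqobs4}).

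None of these steps is a genuine obstacle: the computations are short and local. The only point deserving mild care is the identification, in (\ref{eqobs4}), of the $\eta$-shape operator of $F$ with $A$ — equivalently, the observation that the second fundamental form of $\QR\subset\E$ contributes only in the $\xi$-direction — together with keeping track of which terms drop because $\langle X_{i},T\rangle$ is supported on $i=n$ when $T$ is a principal direction.
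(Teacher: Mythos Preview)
Your proposal is correct and follows essentially the same route as the paper: the paper also derives (\ref{eqobs4}) via the Weingarten formula for $F$ together with (\ref{eqobs3}) and $\widehat{\nabla}\dt=0$, obtains (\ref{eqobs5}) from $X_{i}\langle T,T\rangle=2\langle\nabla_{X_{i}}T,T\rangle$ and (\ref{eq03}), gets (\ref{eqobs6}) directly from (\ref{eq04}), and proves (\ref{eqobs7}) from $X_{i}(\pi_{2}\circ f)=\langle df(X_{i}),\dt\rangle=\langle X_{i},T\rangle$. The only cosmetic difference is that for (\ref{eqobs6}) the paper uses $A_{\eta}T=\lambda_{n}T$ to write $-\lambda_{n}\langle X_{i},T\rangle$ whereas you use $AX_{i}=\lambda_{i}X_{i}$ to write $-\lambda_{i}\langle X_{i},T\rangle$; these coincide since $\langle X_{i},T\rangle=\|T\|\delta_{in}$.
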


\begin{proof} Observe that
 $$\widehat{\nabla}_{X_{i}}\eta_{\mathbb{Q}}=\widehat{\nabla}_{X_{i}}(\eta-\nu\dt)=\widehat{\nabla}_{X_{i}}\eta-X_{i}(\nu)\dt=-df(A_{\eta}X_{i})+
    \nabla^{\perp}_{X_{i}}\eta-X_{i}(\nu)\dt.$$
  By  (\ref{eqobs3}) we get
 $\widehat{\nabla}_{X_{i}}\eta_{\mathbb{Q}}=-\lambda_{i}df(X_{i})+c\nu\langle X_{i},T\rangle\xi-X_{i}(\nu)\dt.$

Using  equations (\ref{eq03}) and  (\ref{eq04}), we get for all $i\in \{1,\ldots,n\}$,
$$2\|T\|X_{i}(\|T\|)=X_{i}(\|T\|^{2})=X_{i}\langle T,T\rangle=2\langle\nabla_{X_{i}}T,T\rangle=2\nu\lambda_{i}\langle X_{i},T\rangle$$
and
$X_{i}(\nu)=-\langle A_{\eta}X_{i}, T\rangle=-\langle X_{i}, A_{\eta}T\rangle=-\lambda_{n}\langle X_{i},T\rangle.$

Moreover,
$$X_{i}(\pi_{2}\circ f)=d\pi_{2}(df(X_{i}))=\pi_{2}df(X_{i})=\langle df(X_{i}),\dt\rangle=\langle X_{i},T\rangle.$$
\end{proof}

\section{Family of parallel hypersurfaces in $\mathbb{S}^{n}\times\mathbb{R}$ and $\mathbb{H}^{n}\times\mathbb{R}$}

Consider the hypersurfaces $\f$ and  $i\colon \QR\rightarrow\E$ with normal fields $\eta$ and $\xi$, respectively such that $\eta$ is unitary and  $\langle\xi,\xi\rangle=c$. Let $F:=i\circ f$, $\pi_{1}\colon\QR~\rightarrow~\mathbb{Q}^{n}_{c}$ and  $\pi_{2}\colon\QR\rightarrow\R$ be the canonical projections. Given  $t\in \R$, $p\in M^{n}$ and  $v\in T_{f(p)}(\QR)$ such that  $d_{f(p)}\pi_{1}(v)=v_{1}$ and  $d_{f(p)}\pi_{2}(v)=v_{2}$, the exponencial map  in $\QR$  is defined by  $$\exp_{f(p)}(tv)=\left(C_{c}(\|v_{1}\|t)\pi_{1}(f(p))+S_{c}(\|v_{1}\|t)\frac{v_{1}}{\|v_{1}\|},\;\pi_{2}(f(p))+tv_{2}\right),\;\;\;\textmd{if}\;\;\; v_{1}\neq0\;\;\; \textrm{and}$$   $$\exp_{f(p)}(tv)=\left(\pi_{1}(f(p)),\;\pi_{2}(f(p))+tv_{2}\right),\;\;\;\textmd{if}\;\;\; v_{1}=0,$$
where
\begin{equation}\label{cos}
C_{c}(s)=\left\{
   \begin{array}{cll}
     \cos(s), & \hbox{if } & c=1\\
     \cosh(s), & \hbox{if} & c=-1
   \end{array}
 \right.
,\;\;\; S_{c}(s)=\left\{
   \begin{array}{cll}
     \sin(s), & \hbox{if} & c=1\\
     \sinh(s), & \hbox{if}& c=-1.
   \end{array}
 \right.
\end{equation}

Take  $p\in M^{n}$,  $v\in T_{f(p)}(\QR)$ and the curve  $\alpha\colon I\subset\mathbb{R}\rightarrow\QR$ given by  $\alpha(t)=\exp_{f(p)}(tv)$. Observe that  $\alpha$ is a geodesic in  $\QR$ that passes through the  point  $\alpha(0)=\left(\pi_{1}(f(p)),\pi_{2}(f(p))\right)=f(p)$ and $\alpha'(0)=\left(v_{1},v_{2}\right)=v$.

From now on  we will study the families of hypersurfaces that are parallel to a hypersurface  having  $T$ as a principal direction. For this, take  $\f$ a hypersurface that has  $T$ as a principal direction and  all the principal curvatures with constant multiplicity.
Let $\{X_{1},\ldots,X_{n}\}$ be a frame of orthonormal  principal directions with  $X_{n}=\|T\|^{-1}T$. Observe that $\xi\circ f=(\pi_{1}\circ f,0)$ and  $\eta_{\mathbb{Q}}=\eta-\nu\dt$ wich implies that  $\|\eta_{\mathbb{Q}}\|=\|T\|\neq0$. Then the  hypersurfaces parallel to  $f$ are given by
\begin{equation}\label{eqpll}
f_{t}=C_{c}(\|T\|t)\xi\circ f+S_{c}(\|T\|t)\|T\|^{-1}\eta_{\mathbb{Q}}+(\pi_{2}\circ f+t\nu)\dt.
\end{equation}

For all  $i\in\{1,\ldots,n\}$ we have
$$\begin{array}{rcl}
df_{t}(X_{i}) & = & -ctS_{c}(\|T\|t)X_{i}(\|T\|)\xi+ C_{c}(\|T\|t)\widehat{\nabla}_{X_{i}}\xi+tC_{c}(\|T\|t)X_{i}(\|T\|)\|T\|^{-1}\eta_{\mathbb{Q}} \\
           &  &  +S_{c}(\|T\|t)\widehat{\nabla}_{X_{i}}\|T\|^{-1}\eta_{\mathbb{Q}}+X_{i}(\pi_{2}\circ f+t\nu)\dt.
 \end{array}
$$
From  (\ref{eqobs1}), (\ref{eqobs4}), (\ref{eqobs5}) and  (\ref{eqobs7}) we obtain
\begin{equation}\label{eqtgi}
df_{t}(X_{i})=\left(\co-\lambda_{i}\|T\|^{-1}\s\right)df(X_{i}),\;\;\textrm{ for}\;\;i\neq n\;\;\;\textrm{ and }
\end{equation}
\begin{equation}\label{eqtgn}
\begin{array}{rcl}
df_{t}(X_{n}) & = & c\nu\s (1-t\lambda_{n})\xi +(1-t\lambda_{n})\left(\nu^{2}\co+\|T\|^{2}\right)df(X_{n})\\
               &  & +(1-t\lambda_{n})\left(1-\co\right)\nu\|T\|\eta.
\end{array}
\end{equation}
Then  $f_{t}$ is an immersion if
$\co-\lambda_{i}\|T\|^{-1}\s\neq0$, for all $i\in\{1,\ldots,n-1\}$ and $1-t\lambda_{n}\neq0$.

Observe  that $\eta_{t}$ given by
\begin{equation}\label{eqnort}
\eta_{t}=-c\|T\|\s\xi\circ f +\co\eta_{\mathbb{Q}}+\nu\dt
\end{equation}
is a unit  vector field normal to $f_{t}$.

Next result gives  the relation between the principal curvatures  of a hypersurface in \; $\QR$ having  $T$ as principal direction and the principal curvatures of its parallel hypersurfaces.

\begin{proposition} Let $\f$ be a hypersurface having $T$ as a principal direction and  $\lambda_{i}$, $i\in\{1,\ldots,n\}$ its principal curvatures. If  $f_{t}$ is a family of hypersurfaces parallel to  $f$ with principal curvatures  $\lambda_{i}^{t}$, $i\in\{1,\ldots,n\}$ then
\begin{equation}\label{eqcti}
\lambda_{i}^{t}=\displaystyle\frac{c\|T\|\s+\lambda_{i}\co}{\co-\lambda_{i}\|T\|^{-1}\s},\;\;\;i\neq n,
\end{equation}
\begin{equation}\label{eqctn}
\textrm{ and }\;\;\;\;\lambda_{n}^{t}=\displaystyle\frac{\lambda_{n}}{1-t\lambda_{n}}.
\end{equation}
\end{proposition}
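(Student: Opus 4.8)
The plan is to compute the shape operator $A^{t}$ of each parallel hypersurface $f_{t}$ directly from the definition $\widehat{\nabla}_{X_{i}}\eta_{t} = -df_{t}(A^{t}X_{i}) + (\text{normal part})$, using the frame $\{X_{1},\dots,X_{n}\}$ that diagonalizes $A$, and then read off the eigenvalues. The key observation is that the formulas (\ref{eqtgi})--(\ref{eqtgn}) already express $df_{t}(X_{i})$ in terms of the ambient frame, and (\ref{eqnort}) gives $\eta_{t}$ explicitly, so everything reduces to differentiating (\ref{eqnort}) along $X_{i}$ and matching.

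First I would treat the generic directions $i \neq n$. Differentiating $\eta_{t} = -c\|T\|\s\,\xi\circ f + \co\,\eta_{\mathbb{Q}} + \nu\dt$ along $X_{i}$, I use $X_{i}(\|T\|)=0$ and $X_{i}(\nu)=0$ from (\ref{eqobs5})--(\ref{eqobs6}), so the coefficients of $\s$, $\co$, $\nu$ are constant along $X_{i}$ and the only contributions come from $\widehat{\nabla}_{X_{i}}\xi$ (via (\ref{eqobs1})) and $\widehat{\nabla}_{X_{i}}\eta_{\mathbb{Q}}$ (via (\ref{eqobs4})). Since $\langle X_{i},T\rangle = 0$ for $i\neq n$, the $\xi$- and $\dt$-terms in (\ref{eqobs1}) and (\ref{eqobs4}) drop out, leaving $\widehat{\nabla}_{X_{i}}\eta_{t} = \big(-c\|T\|\s - \lambda_{i}\co\big)df(X_{i})$. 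Comparing with (\ref{eqtgi}), which says $df_{t}(X_{i}) = \big(\co - \lambda_{i}\|T\|^{-1}\s\big)df(X_{i})$, and using that the tangential part of $-\widehat{\nabla}_{X_{i}}\eta_{t}$ is $df_{t}(A^{t}X_{i})$, I get $A^{t}X_{i} = \lambda_{i}^{t}X_{i}$ with $\lambda_{i}^{t}$ as in (\ref{eqcti}); I should also check the purported normal component vanishes, i.e. that $-\widehat{\nabla}_{X_{i}}\eta_{t}$ is genuinely tangent to $f_{t}$, which follows since it is a multiple of $df(X_{i}) = \|T\|\,\text{(factor)}^{-1}df_{t}(X_{i})$.

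For $i = n$, i.e. the $T$-direction, the computation of $\lambda_{n}^{t}=\lambda_{n}/(1-t\lambda_{n})$ is easiest by a different route: the family $f_{t}$ restricted to an integral curve of $T$ (equivalently, the $\R$-direction behavior) behaves like a parallel family of curves, and the principal curvature in that direction evolves by the classical ODE $\lambda_{n}^{t} = \lambda_{n}/(1 - t\lambda_{n})$; concretely one differentiates (\ref{eqnort}) along $X_{n}$, now keeping the $X_{n}(\|T\|) = \nu\lambda_{n}$ and $X_{n}(\nu) = -\lambda_{n}\|T\|$ terms, and compares the tangential part with (\ref{eqtgn}). The main obstacle is precisely this $n$-th case: (\ref{eqtgn}) mixes $\xi$, $df(X_{n})$, and $\eta$ components, so one must carefully collect terms, use $\nu^{2} + \|T\|^{2} = 1$ repeatedly, and exploit the identities $C_{c}' = -cS_{c}$, $S_{c}' = C_{c}$ together with $\dt = df(X_{n})\|T\| + \nu\eta$ to re-express the ambient derivative purely in terms of $df_{t}(X_{n})$; the factor $(1-t\lambda_{n})$ then cancels from numerator and denominator, leaving the stated formula. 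Once both cases are done, since $\{X_{1},\dots,X_{n}\}$ is orthogonal and each is shown to be an eigenvector of $A^{t}$ (after renormalizing, as $df_{t}$ rescales lengths), these are exactly the principal directions of $f_{t}$ and the $\lambda_{i}^{t}$ its principal curvatures, completing the proof.
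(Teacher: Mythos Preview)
Your proposal is correct and follows essentially the same route as the paper: differentiate $\eta_{t}$ along each $X_{i}$ using (\ref{eqobs1}), (\ref{eqobs4})--(\ref{eqobs6}), obtain the analogues of the paper's (\ref{eqdnti})--(\ref{eqdntn}), and then compare with $df_{t}(X_{i})$ from (\ref{eqtgi})--(\ref{eqtgn}) to read off the eigenvalues. The only minor streamlining in the paper's version is that for $i=n$ it extracts $\lambda_{n}^{t}$ via the ratio $-\langle\widehat{\nabla}_{X_{n}}\eta_{t},df_{t}(X_{n})\rangle/\langle df_{t}(X_{n}),df_{t}(X_{n})\rangle$ rather than attempting to rewrite $\widehat{\nabla}_{X_{n}}\eta_{t}$ entirely in terms of $df_{t}(X_{n})$ plus normal part, which spares some of the term-collecting you anticipate.
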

\begin{proof}
 Let $\{X_{1},\ldots,X_{n}\}$ be an orthogonal frame of principal directions of $f$.

From  (\ref{eqnort}), (\ref{eqobs1}), (\ref{eqobs4}), (\ref{eqobs5}) and (\ref{eqobs6}), we conclude that
 \begin{equation}\label{eqdnti}
 \widehat{\nabla}_{X_{i}}\eta_{t}=-\left(c\|T\|\s+\lambda_{i}\co\right)df(X_{i}),\;\;\;\textrm{ for}\;\;i\neq n \;\;\;\textrm{ and }
  \end{equation}
\begin{equation}\label{eqdntn}
\begin{array}{rcl}
   \widehat{\nabla}_{X_{n}}\eta_{t} & = & \left\{(1-t\lambda_{n})\|T\|\co-\lambda_{n}\s\right\}c\nu\xi \\
   &   & +\left\{-\lambda_{n}(\|T\|^{2}+\nu^{2}\co)-c\nu^{2}\|T\|\s(1-t\lambda_{n})\right\}df(X_{n}) \\
   &   & +\left\{c\nu\|T\|^{2}\s(1-t\lambda_{n})-\nu\|T\|\lambda_{n}(1-\co)\right\}\eta.
\end{array}
\end{equation}
 Observe that if $\{X_{1},\ldots,X_{n}\}$ is an orthogonal frame of principal directions of $f$ then it is also an orthogonal frame of principal directions of $f_{t}$.

From  (\ref{eqtgi}) and  (\ref{eqtgn}) we get
\begin{equation}\label{eqd1}
\langle df_{t}(X_{i}),df_{t}(X_{i})\rangle=\left(\co-\lambda_{i}\|T\|^{-1}\s\right)^{2},\;\;\;\textrm{ for }\;i\neq n\;\;\;\textrm{ and }
\end{equation}
\begin{equation}\label{eqd2}
\langle df_{t}(X_{n}),df_{t}(X_{n})\rangle=(1-t\lambda_{n})^{2}.
\end{equation}
Moreover by using  (\ref{eqtgi}) and  (\ref{eqdnti}) we get for $i\neq n$,
\begin{equation}\label{eqn1}
\begin{array}{rcl}
-\langle \widehat{\nabla}_{X_{i}}\eta_{t},df_{t}(X_{i}) \rangle& = &\left(c\|T\|\s+\lambda_{i}\co\right) \\
&  & \left(\co -\lambda_{i}\|T\|^{-1}\s\right).
\end{array}
\end{equation}
From  (\ref{eqtgn}) and (\ref{eqdntn}) we conclude also that
\begin{equation}\label{eqn2}
 \langle \widehat{\nabla}_{X_{n}}\eta_{t},df_{t}(X_{n}) \rangle=-\lambda_{n}(1-t\lambda_{n}).
\end{equation}
Finally using (\ref{eqd1}) and (\ref{eqn1}) we show (\ref{eqcti}) and from (\ref{eqd2}) and (\ref{eqn2}) we obtain (\ref{eqctn}).
\end{proof}

\begin{remark}
Since we are supposing that  the principal curvatures  $\lambda_{i}$, $i\in\{1,\ldots,n\}$, have constant multiplicity we conclude that  the curvatures  $\lambda_{i}^{t}$, $i\in\{1,\ldots,n\}$, also have constant multiplicity and by  (\cite{Nomizu}) they are differentiable.
\end{remark}

\section{ A necessary and sufficient condition for an isoparametric hypersurface of $\QR$ having constant principal curvatures}

Cartan proved in \cite{Ca1} that a necessary and sufficient condition for a family of parallel hypersurfaces in a Riemannian manifold to be isoparametric is that all hypersurfaces must  have constant mean curvature.

In the next result we obtain the necessary and suficient condition that an isoparametric hypersurface in  $\QR$ having   $T$ as a principal direction must satisfy  to have constant principal curvatures.

\begin{theorem} \label{di}
Let  $\f$ be an isoparametric hypersurface having  $T$ as a principal direction. Then  $f$ has constant principal curvatures if and only if  $\|T\|$ is constant.
\end{theorem}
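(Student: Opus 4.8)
The plan is to combine Cartan's criterion recalled just above (a family of parallel hypersurfaces is isoparametric if and only if every member has constant mean curvature, \cite{Ca1}) with the formulas (\ref{eqcti})--(\ref{eqctn}) relating the principal curvatures of $f$ to those of its parallel hypersurfaces $f_{t}$. I will first dispose of the trivial case $T\equiv0$: then $f(M^{n})$ is an open part of a slice (Proposition \ref{triviais}(i)), hence totally geodesic, so $f$ has constant principal curvatures and $\|T\|\equiv0$, and the statement holds. So assume $T$ is nowhere zero, set $B:=A|_{[T]^{\bot}}$, a smooth $A$-invariant symmetric endomorphism of the rank-$(n-1)$ bundle $[T]^{\bot}$, and let $\lambda_{n}$ denote the principal curvature in the direction $T$. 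Reading (\ref{eqcti})--(\ref{eqctn}) in operator form, $A_{t}T=\frac{\lambda_{n}}{1-t\lambda_{n}}T$ and $A_{t}|_{[T]^{\bot}}=\varphi_{t}(B)$, so for $t$ near $0$ (where $f_{t}$ is an immersion) the unnormalized mean curvature of $f_{t}$ is
\begin{equation*}
H_{t}=\frac{\lambda_{n}}{1-t\lambda_{n}}+\operatorname{tr}\varphi_{t}(B),\qquad \varphi_{t}(s)=\frac{c\|T\|\,\s+s\,\co}{\co-\|T\|^{-1}s\,\s},
\end{equation*}
and by Cartan's criterion $f$ is isoparametric exactly when $H_{t}$ is constant on $M^{n}$ for every such $t$.

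For the implication ``constant principal curvatures $\Rightarrow$ $\|T\|$ constant'' I would compute $\partial_{t}H_{t}|_{t=0}$. From (\ref{cos}) one has $\partial_{t}\s=\|T\|\co$ and $\partial_{t}\co=-c\|T\|\s$, hence $\s|_{0}=0$, $\co|_{0}=1$, and a short computation yields $\varphi_{0}=\mathrm{id}$ and $\partial_{t}\varphi_{t}(s)|_{0}=s^{2}+c\|T\|^{2}$; differentiating the display at $t=0$ gives
\begin{equation*}
\left.\frac{\partial H_{t}}{\partial t}\right|_{t=0}=\lambda_{n}^{2}+\operatorname{tr}(B^{2})+(n-1)c\|T\|^{2}=\operatorname{tr}(A^{2})+(n-1)c\|T\|^{2},
\end{equation*}
which is just the first variation $\|A\|^{2}+\mathrm{Ric}(\eta,\eta)$ of the mean curvature of the parallel family, since the $\mathbb{Q}^{n}_{c}$-component of $\eta$ has norm $\|T\|$. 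If $f$ has constant principal curvatures then $\operatorname{tr}(A^{2})$ is constant on $M^{n}$, and if $f$ is isoparametric then $H_{t}$, hence $\partial_{t}H_{t}|_{0}$, is constant on $M^{n}$; therefore $(n-1)c\|T\|^{2}$ is constant, and as $n\geq2$ and $c=\pm1$, $\|T\|$ is constant.

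For the converse ``$\|T\|$ constant $\Rightarrow$ constant principal curvatures'', assume $\|T\|\equiv b$. Then $\nu$ is locally constant by (\ref{vt1}), so $\lambda_{n}\equiv0$ by Remark \ref{obsvctd} and the first term of $H_{t}$ drops out. Using $\co^{2}+c\s^{2}=1$ (from (\ref{cos})), partial fractions give $\varphi_{t}(s)=-b\co/\s+(b^{2}/\s^{2})(w_{t}-s)^{-1}$ with $w_{t}:=b\,\co/\s$, so
\begin{equation*}
H_{t}=\operatorname{tr}\varphi_{t}(B)=-(n-1)\frac{b\,\co}{\s}+\frac{b^{2}}{\s^{2}}\operatorname{tr}\!\big((w_{t}I-B)^{-1}\big).
\end{equation*}
As $b$ is constant, $-(n-1)b\co/\s$ and $b^{2}/\s^{2}$ depend on $t$ only, so isoparametricity forces $\operatorname{tr}\big((w_{t}I-B)^{-1}\big)$ to be constant on $M^{n}$ for every small $t$. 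Since $\operatorname{tr}\big((wI-B(p))^{-1}\big)=\chi_{p}'(w)/\chi_{p}(w)$, where $\chi_{p}(w)=\det(wI-B(p))$ is monic of degree $n-1$ and smooth in $p$, and since $w_{t}=b\,C_{c}(bt)/S_{c}(bt)$ sweeps a nonempty open interval as $t$ varies, the logarithmic derivative $\chi_{p}'/\chi_{p}$ is independent of $p$ on an interval, hence everywhere; two monic polynomials of equal degree with the same logarithmic derivative coincide, so $\chi_{p}$ is independent of $p$. Thus $B=A|_{[T]^{\bot}}$ has constant eigenvalues with constant multiplicities, which together with $\lambda_{n}\equiv0$ gives constant principal curvatures.

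I expect the main obstacle to be this converse direction — extracting the individual principal curvatures from the one-parameter constraint ``$H_{t}$ constant for all $t$''. The device above (isolate the resolvent trace $\operatorname{tr}((w_{t}I-B)^{-1})$ inside $H_{t}$, identify it with the logarithmic derivative of the characteristic polynomial of $A|_{[T]^{\bot}}$, and invoke rigidity of rational functions on an interval) is what makes it work. A minor technical point is to check that, near a fixed point, $f_{t}$ is a bona fide immersed parallel hypersurface for $t$ in a fixed interval, which holds because the eigenvalues of $B$ are locally bounded while $w_{t}\to\infty$ as $t\to0$; and in the first direction one should note that ``constant principal curvatures'' entails constant multiplicities, so that $\operatorname{tr}(A^{2})$ is genuinely a constant function.
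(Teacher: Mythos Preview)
Your proof is correct. The forward implication is essentially the paper's: both compute the first $t$-derivative of the mean curvature of the parallel family at $t=0$, obtain $\operatorname{tr}(A^{2})+(n-1)c\|T\|^{2}$, and conclude.

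The converse, however, takes a genuinely different route. The paper proceeds by repeatedly differentiating $u(t)=\sum_i\lambda_i^{t}$ at $t=0$: using $\partial_t\lambda_i^{t}=c\|T\|^{2}+(\lambda_i^{t})^{2}$ it derives, by induction on the order of the derivative, closed formulas for $u^{(k)}(0)$ as polynomials in the power sums $\sum_i\lambda_i^{j}$ with $j\le k+1$ and in $\|T\|$; since each $u^{(k)}(0)$ is constant (isoparametricity) and $\|T\|$ is constant, this recursively forces every $\sum_i\lambda_i^{k}$ to be constant, and Newton's identities finish. Your argument instead exploits the algebraic identity $C_c^{2}+cS_c^{2}=1$ to rewrite $\varphi_t$ as an affine function of the resolvent $(w_tI-B)^{-1}$, recognizes $\operatorname{tr}\big((wI-B)^{-1}\big)=\chi'(w)/\chi(w)$, and then uses that a rational function is determined by its values on an interval to conclude that the monic characteristic polynomial of $B$ is the same at every point. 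This bypasses the inductive derivative computation and the appeal to Newton's identities entirely, at the cost of the (mild) analytic input that $w_t$ sweeps an open interval. A side benefit of your operator formulation is that $\operatorname{tr}\varphi_t(B)$ is visibly smooth on $M$ without first invoking a smooth eigenframe, so the constant-multiplicity hypothesis used in the paper's derivation of (\ref{eqcti})--(\ref{eqctn}) is not needed along the way.
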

\begin{proof}
Define the real valued function
$$u(t)=\sum_{i=1}^{n}\lambda_{i}^{t}.$$

 Since  $T$ is a principal direction we may use expressions  (\ref{eqcti}) and (\ref{eqctn}).
Observe that
\begin{equation}\label{eqind1}
\displaystyle\frac{\partial\lambda_{i}^{t} }{\partial t}=c\|T\|^{2}+(\lambda_{i}^{t})^{2},\;\;\;\textrm{for}\;\;i\in\{1,\ldots,n-1\}.
\end{equation}
 If $f$ has constant principal curvatures then $\displaystyle\sum_{i=1}^{n}\lambda_{i}^{k}$, $1\leq k\leq n$ is also constant. We have that $u'(t)=\displaystyle\sum_{i=1}^{n-1}c\|T\|^{2}+(\lambda_{i}^{t})^{2}+(\lambda_{n}^{t})^{2}$. Then
$$u'(0)=(n-1) c\|T\|^{2}+\displaystyle\sum_{i=1}^{n}\lambda_{i}^{2}$$
and $\|T\|$ is constant.

On the converse if $\|T\|$ is constant  the function $\nu$ is constant and by
(\ref{eq04}), $\lambda_{n}=0$. Hence from  (\ref{eqctn}), it follows that $\lambda_{n}^{t}=0$.
 So $u(t)=\displaystyle\sum_{i=1}^{n-1}\lambda_{i}^{t}$ and consequently its derivative of order  $k$ is  $u^{k}(t)=\displaystyle\sum_{i=1}^{n-1}\frac{\partial^{k}\lambda_{i}^{t} }{\partial t^{k}}$.
Observe that
\begin{equation}\label{eqind2}
\displaystyle\frac{\partial^{2}\lambda_{i}^{t} }{\partial t^{2}}=2c\|T\|^{2}\lambda_{i}^{t}+2(\lambda_{i}^{t})^{3}.
\end{equation}
Let us prove using the  induction process that for odd $k$,  $2< k< n$  we get,
\begin{equation}\label{eqki}
\begin{array}{rcl}
 \displaystyle\frac{\partial^{k}\lambda_{i}^{t} }{\partial t^{k}} & = & u_{k,0}c^{\frac{k+1}{2}}\|T\|^{k+1}+u_{k,2}c^{\frac{k-1}{2}}\|T\|^{k-1}(\lambda_{i}^{t})^{2}+u_{k,4}c^{\frac{k-3}{2}}\|T\|^{k-3}(\lambda_{i}^{t})^{4}\\
 &   & +\ldots+u_{k,k+1}(\lambda_{i}^{t})^{k+1},
\end{array}
\end{equation}
where we denote by $u_{k,j}$ the $j$-th coefficient $u_{j}$ of the derivative of order  $k$ of  $\lambda_{i}^{t}$.

If   $k$ is even,  $2\leq k< n$  we obtain
\begin{equation}\label{eqkp}
\begin{array}{rcl}
\displaystyle\frac{\partial^{k}\lambda_{i}^{t} }{\partial t^{k}}&=&u_{k,1}c^{\frac{k}{2}}\|T\|^{k}\lambda_{i}^{t}+u_{k,3}c^{\frac{k-2}{2}}\|T\|^{k-2}(\lambda_{i}^{t})^{3}+u_{k,5}c^{\frac{k-4}{2}}\|T\|^{k-4}(\lambda_{i}^{t})^{5}
+\ldots \\
&  &+u_{k,k+1}(\lambda_{i}^{t})^{k+1},
\end{array}
\end{equation}
where $u_{k,0}=u_{k-1,1}$, \,\,$u_{k,1}=2u_{k-1,2}$,\,\, $u_{k,2}=3u_{k-1,3}+u_{k-1,1},\;\ldots\;,$\,\,
 $u_{k,j}=(j+1)u_{k-1,j+1}+(j-1)u_{k-1,j-1},\;\ldots ,$ \; $u_{k,k+1}=ku_{k-1,k}$. We point out  that if  $k$ is odd  the index  $j$ of $u_{k,j}$ is an even number and when   $k$ is even the index  $j$ of $u_{k,j}$ is odd.

By (\ref{eqind1}), we get $u_{1,2}=1$. For $k=2$ and using equation  (\ref{eqind2}), we obtain
$$\displaystyle\frac{\partial^{2}\lambda_{i}^{t} }{\partial t^{2}}=2c\|T\|^{2}\lambda_{i}^{t}+2(\lambda_{i}^{t})^{3}=2u_{1,2}c \|T\|^{2}\lambda_{i}^{t}+2u_{1,2}(\lambda_{i}^{t})^{3}=u_{2,1} c \|T\|^{2}\lambda_{i}^{t}+u_{2,3}(\lambda_{i}^{t})^{3},$$
that satisfies equation (\ref{eqkp}).

By the induction hypothesis let us suppose that equations  (\ref{eqki}) and  (\ref{eqkp}) hold for the index $k-1$. We will show that they  hold also  for the index  $k$.

If  $k$ is an even number  then   $k-1$ is an odd number  and equation  (\ref{eqki}) holds, that is,
\begin{equation} \label{eqh}
\begin{array}{rcl}
{\displaystyle \frac{\partial^{k-1}\lambda_{i}^{t} }{\partial t^{k-1}} } &= & u_{k-1,0}c^{\frac{k}{2}}\|T\|^{k}+u_{k-1,2}c^{\frac{k-2}{2}}\|T\|^{k-2}(\lambda_{i}^{t})^{2}\\
 &  & +u_{k-1,4}c^{\frac{k-4}{2}}\|T\|^{k-4}(\lambda_{i}^{t})^{4} +\ldots+u_{k-1,k}(\lambda_{i}^{t})^{k}.
\end{array}
\end{equation}
By deriving equation  (\ref{eqh}), with respect to the variable  $t$, using equation (\ref{eqind1}), we get
$$
\begin{array}{rcl}
  \displaystyle\frac{\partial^{k}\lambda_{i}^{t} }{\partial t^{k}} & = & 2u_{k-1,2}c^{\frac{k-2}{2}}\|T\|^{k-2}\lambda_{i}^{t}\displaystyle\frac{\partial\lambda_{i}^{t} }{\partial t}+4u_{k-1,4}c^{\frac{k-4}{2}}\|T\|^{k-4}(\lambda_{i}^{t})^{3}\displaystyle\frac{\partial\lambda_{i}^{t} }{\partial t}+\ldots\\
   &   & +ku_{k-1,k}(\lambda_{i}^{t})^{k-1}\displaystyle\frac{\partial\lambda_{i}^{t} }{\partial t} \\
   & = &  2u_{k-1,2}c^{\frac{k}{2}}\|T\|^{k}\lambda_{i}^{t}+(2u_{k-1,2}+4u_{k-1,4})c^{\frac{k-2}{2}}\|T\|^{k-2}(\lambda_{i}^{t})^{3}+\ldots\\
   &   &+ ku_{k-1,k}(\lambda_{i}^{t})^{k+1}\\
   & = & u_{k,1}c^{\frac{k}{2}}\|T\|^{k}\lambda_{i}^{t}+u_{k,3}c^{\frac{k-2}{2}}\|T\|^{k-2}(\lambda_{i}^{t})^{3}+\ldots+
   u_{k,k+1}(\lambda_{i}^{t})^{k+1}.
\end{array}
$$
Then equation  (\ref{eqkp}) holds. In the same way it can be shown that  equation  (\ref{eqki}) also holds.

Since $\lambda_{n}=0$, we have that $u(0)=\displaystyle\sum_{i=1}^{n-1}\lambda_{i}=C_{1}$, with $C_{1}$ constant and
$$u'(0)=\displaystyle\sum_{i=1}^{n-1}c\|T\|^{2}+\lambda_{i}^{2},$$ which implies that $\displaystyle\sum_{i=1}^{n}\lambda_{i}^{2}=C_{2}$, with $C_{2}$ constant.
Hence $$u''(0)=(n-1)2c\|T\|^{2}\displaystyle\sum_{i=1}^{n-1}\lambda_{i}+2\displaystyle\sum_{i=1}^{n-1}\lambda_{i}^{3}=(n-1)2c\|T\|^{2}C_{1}+
2\displaystyle\sum_{i=1}^{n-1}\lambda_{i}^{3},$$ and
$\displaystyle\sum_{i=1}^{n}\lambda_{i}^{3}=C_{3}$, with $C_{3}$ constant.

If $k$ is an even number  we get
$$\begin{array}{rcl}
u^{k}(0) & = & u_{k,1}c^{\frac{k}{2}}\|T\|^{k}\displaystyle\sum_{i=1}^{n-1}\lambda_{i}+
u_{k,3}c^{\frac{k-2}{2}}\|T\|^{k-2}\displaystyle\sum_{i=1}^{n-1}(\lambda_{i})^{3}+u_{k,5}c^{\frac{k-4}{2}}\|T\|^{k-4}\displaystyle\sum_{i=1}^{n-1}(\lambda_{i})^{5}\\
         &   & +\ldots+u_{k,k+1}\displaystyle\sum_{i=1}^{n-1}(\lambda_{i})^{k+1}\\
    & = & u_{k,1}c^{\frac{k}{2}}\|T\|^{k}C_{1}+ u_{k,3}c^{\frac{k-2}{2}}\|T\|^{k-2}C_{3}+u_{k,5}c^{\frac{k-4}{2}}\|T\|^{k-4}C_{5}
    +\ldots \\
      &  & + u_{k,k+1}\displaystyle\sum_{i=1}^{n-1}(\lambda_{i})^{k+1}.
\end{array}
$$
Then $\displaystyle\sum_{i=1}^{n}\lambda_{i}^{k+1}$ is constant. In a similar way we obtain the same if  $k$ is an odd number.
Finally  we conclude that $\displaystyle\sum_{i=1}^{n}\lambda_{i}^{k}$, $1\leq k\leq n$, is constant.
 Based on the demonstration of \;\cite[Theorem 5.8]{CR}, by Newton identity the coeficients of the characteristic polynomial  of the  Weingarten  operator $A$ are also polynomials  $\displaystyle\sum_{i=1}^{n}\lambda_{i}^{k}$, $1\leq k\leq n$. Then the principal curvatures are also constant because they are the roots of the characteristic polynomial.
 \end{proof}

Next result for a hypersurface with constant angle $\nu\neq1$, given by \cite[Corollary 2]{RT1}, is proved using Theorem \ref{di}
\begin{corollary} \label{haci}
Let  $\f$ be a hypersurface with constant angle and  $T\neq0$. Then  $f$ is  isoparametric if and only if the principal curvatures are constant.
\end{corollary}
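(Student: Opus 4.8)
The plan is to derive the corollary from Theorem~\ref{di} together with Cartan's characterization of isoparametric families recalled at the start of this section. First I would unpack the hypothesis of constant angle: by definition $\nu$ is constant, so (\ref{vt1}) forces $\|T\|^{2}=1-\nu^{2}$ to be constant, and since $T\neq0$ this constant is positive. Moreover, Remark~\ref{obsvctd} applies: with $\nu$ constant and $T\neq0$, the field $T$ is a principal direction of $f$ and the principal curvature associated to it vanishes, i.e.\ $\lambda_{n}=0$ in the notation of Section~4. Hence $f$ belongs to the class of hypersurfaces covered by Theorem~\ref{di} and, in addition, already satisfies the condition that $\|T\|$ be constant.

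With this preparation the forward implication is immediate: if $f$ is isoparametric, then, $T$ being a principal direction and $\|T\|$ being constant, the implication ``$\|T\|$ constant $\Rightarrow$ constant principal curvatures'' of Theorem~\ref{di} gives at once that the principal curvatures of $f$ are constant.

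For the converse I would invoke the parallel family of Section~4. Assume the principal curvatures $\lambda_{1},\ldots,\lambda_{n}$ of $f$ are constant; then the distinct ones have constant multiplicities, and since $T$ is a principal direction the hypotheses needed to form $f_{t}$ are in force. For $t$ in a neighbourhood of $0$ the map $f_{t}$ is an immersion, since the factors $\co-\lambda_{i}\|T\|^{-1}\s$ in (\ref{eqtgi}) equal $1$ at $t=0$ and $1-t\lambda_{n}=1$ identically because $\lambda_{n}=0$. The principal curvatures $\lambda_{i}^{t}$ of $f_{t}$ are then given by (\ref{eqcti}) and (\ref{eqctn}); since $\|T\|$ and every $\lambda_{i}$ are constant on $M^{n}$, each $\lambda_{i}^{t}$ is a function of $t$ alone, so the mean curvature $\sum_{i=1}^{n}\lambda_{i}^{t}$ of $f_{t}$ is constant on $M^{n}$ for every such $t$. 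By Cartan's criterion, the family $\{f_{t}\}$ is therefore isoparametric, and in particular $f=f_{0}$ is isoparametric.

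I expect the only mild obstacle to lie in the bookkeeping of the converse: one has to check that the Section~4 construction genuinely applies (namely that $T$ is a principal direction, which comes from Remark~\ref{obsvctd}, and that the principal curvatures have constant multiplicity, which follows from their constancy) and that $f_{t}$ is a legitimate immersion for small $t$, which is exactly what the local isoparametric condition requires. Everything else reduces to a direct application of Theorem~\ref{di}, of the curvature formulas (\ref{eqcti})--(\ref{eqctn}), and of Cartan's theorem.
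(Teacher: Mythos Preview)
Your proposal is correct and follows essentially the same route as the paper's proof: both first observe via Remark~\ref{obsvctd} and (\ref{vt1}) that $T$ is a principal direction with $\lambda_n=0$ and $\|T\|$ constant, then use Theorem~\ref{di} for the implication ``isoparametric $\Rightarrow$ constant principal curvatures,'' and use the explicit formulas (\ref{eqcti})--(\ref{eqctn}) for the parallel family to show that constant $\lambda_i$ and constant $\|T\|$ force each $\lambda_i^t$ to be constant on $M^n$, whence $f$ is isoparametric. The only cosmetic difference is that you spell out Cartan's criterion and the well-definedness of $f_t$ for small $t$, whereas the paper passes directly from ``all $\lambda_i^t$ constant'' to ``$f$ isoparametric.''
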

\begin{proof}
By Remark \ref{obsvctd}, $T$ is a principal direction.
 Since $f$ has $\nu$ constant  from  (\ref{vt1})  $\|T\|$ is also constant.  Moreover by  (\ref{eq04}), $\lambda_{n}=0$ and so by  (\ref{eqctn}), $\lambda_{n}^{t}=0$.

 Suppose that the principal curvatures  $\lambda_{i}$ of  $f$ are constant. By (\ref{eqcti}) and  (\ref{eqctn}) the principal curvatures of the family   $f_{t}$ are also constant and  $f$ is isoparametric.

On the converse if $f$ is  isoparametric, since  $\|T\|$ is constant, by Theorem  \ref{di}, the principal curvatures of  $f$ are  constant.
\end{proof}

\section{Hypersurfaces in $\QR$ having  constant principal curvatures and  $T$ as a principal direction}

In this section we classify the hypersurfaces of $\QR$ with constant principal curvatures having field $T$ as a principal direction. First, we state  the following technical lemma.

\begin{lemma}\label{lem01} Let  $a:I\subset\mathbb{R}\rightarrow\mathbb{R}$ be a differentiable function such that  $a'(s)>0$ and  $a''(s)\neq0$, for all  $s\in I$. The solutions of the differential equation $a'''(1+(a')^{2})-3(a'')^{2}a'~=~0$ are given by  $$a(s)=-\frac{\sqrt{1-(c_{1}s+c_{2})^{2}}}{c_{1}}+c_{3},$$ where  $c_{1}, c_{2}$ and  $c_{3}$ are real constant, $0<c_{1}s+c_{2}<1$ and  $c_{1}\neq0$.
\end{lemma}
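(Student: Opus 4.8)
The plan is to treat this purely as an ODE problem, using $a'$ as the basic unknown so that the third-order equation collapses into a chain of first-order separable ones. First I would set $p:=a'$; then the hypotheses read $p>0$ and $p'=a''\neq 0$ on $I$ (with $a$ tacitly of class $C^{3}$, so $p'$ is continuous and hence of constant sign), and the equation becomes $p''(1+p^{2})=3(p')^{2}p$. Since $p'$ never vanishes I may divide by $p'(1+p^{2})>0$ to obtain $p''/p' = 3pp'/(1+p^{2})$, which is exactly $\frac{d}{ds}\log|p'| = \frac{d}{ds}\bigl(\tfrac{3}{2}\log(1+p^{2})\bigr)$. Integrating once gives $p' = c_{1}(1+p^{2})^{3/2}$ for some constant $c_{1}$, and $c_{1}\neq 0$ precisely because $p'=a''\neq 0$; this already explains the constraint $c_{1}\neq 0$ in the statement.

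Next I would integrate this first-order equation. Dividing by $(1+p^{2})^{3/2}$ and noting that the left-hand side is $\frac{d}{ds}\bigl(p/\sqrt{1+p^{2}}\bigr)$, a second integration yields $p/\sqrt{1+p^{2}} = c_{1}s+c_{2}$ for a constant $c_{2}$. Writing $w:=c_{1}s+c_{2}$ and solving the algebraic identity $p^{2}=w^{2}(1+p^{2})$ for $p$ gives $p^{2}=w^{2}/(1-w^{2})$; this forces $w^{2}<1$, and the sign condition $a'=p>0$ forces $w>0$, so that $0<c_{1}s+c_{2}<1$ and
$$a'(s)=\frac{c_{1}s+c_{2}}{\sqrt{1-(c_{1}s+c_{2})^{2}}}.$$
A final integration, with the substitution $u=c_{1}s+c_{2}$, turns the right-hand side into $\frac{1}{c_{1}}\int u(1-u^{2})^{-1/2}\,du = -\frac{1}{c_{1}}\sqrt{1-u^{2}}+c_{3}$, which is exactly the asserted formula $a(s)=-\sqrt{1-(c_{1}s+c_{2})^{2}}/c_{1}+c_{3}$.

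Since every manipulation above is reversible — each division is by a quantity that is nonzero under the standing hypotheses $a'>0$ and $a''\neq 0$ — no solutions are lost, so the computation simultaneously shows that the displayed family exhausts the solutions and that each member of it solves the equation; if desired the latter can also be confirmed by a direct differentiation. I do not expect a genuine obstacle here: the only delicate part is the sign and domain bookkeeping in passing from $p/\sqrt{1+p^{2}}=c_{1}s+c_{2}$ back to an explicit $a'$, which is exactly what produces the inequalities $0<c_{1}s+c_{2}<1$ and the nonvanishing of $c_{1}$ recorded in the statement.
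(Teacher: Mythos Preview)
Your argument is correct. The substitution $p=a'$ followed by the two successive integrations
\[
\frac{p''}{p'}=\frac{3pp'}{1+p^{2}}\ \Longrightarrow\ p'=c_{1}(1+p^{2})^{3/2}\ \Longrightarrow\ \frac{p}{\sqrt{1+p^{2}}}=c_{1}s+c_{2}
\]
is clean, and the sign/domain analysis that extracts $0<c_{1}s+c_{2}<1$ and $c_{1}\neq 0$ from $a'>0$, $a''\neq 0$ is exactly right. The final antiderivative is elementary, and your remark that every division is by a nonvanishing quantity under the standing hypotheses justifies that the displayed family is the complete solution set.

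There is nothing to compare here: the paper states this lemma as a technical fact and gives no proof of it, using only the resulting formulas for $a'$ and $a''$ in the argument of Theorem~\ref{ctdp}. Your write-up therefore supplies what the paper omits.
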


\begin{theorem}\label{ctdp}Let  $\f$, $n\geq2$ be a hypersurface having  constant principal curvatures and   $T$ as a principal direction  such that  $\nu(p)\neq0$, for all $p\in M$. Then  $c=-1$ and  $f$ is given locally by  $f(x,s)= h_{s}(x)+Bs\dt$, for some  $B\in \mathbb{R}$, $B>0$, where  $h_{s}$ is a family of horospheres in  $\mathbb{H}^{n}$. Moreover the principal curvature  associated to the field  $T$ is equal to $0$ and the others principal curvatures are all equal to $\displaystyle\frac{B}{\sqrt{1+B^{2}}}$ or  $-\displaystyle\frac{B}{\sqrt{1+B^{2}}}$.
\end{theorem}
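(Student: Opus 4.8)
The plan is to analyze the distribution $[T]^{\bot}$ and its leaves, and to realize $f$ as a ``sweep'' of an isoparametric leaf along a family of parallel hypersurfaces of $\mathbb{Q}^{n}_{c}$, the constancy of the principal curvatures of $f$ being translated into an ODE (that of Lemma~\ref{lem01}). Since $\nu\neq0$ rules out slices, by Proposition~\ref{triviais}(i) I may assume $T\neq0$, hence $\|T\|>0$, and (by the Corollary to Theorem~\ref{erdp} when $g\ge2$, the case $g=1$ being immediate and in fact excluded a posteriori) choose an orthonormal frame $\{X_{1},\ldots,X_{n}\}$ of principal directions with $X_{n}=\|T\|^{-1}T$ and constant principal curvatures $\lambda_{1},\ldots,\lambda_{n}$. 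From $\nabla_{X}T=\nu AX$ one sees $[T]^{\bot}$ is integrable, and by (\ref{eqobs5})--(\ref{eqobs7}) the functions $\|T\|$, $\nu$ and $\pi_{2}\circ f$ are constant on each leaf $L$, so $f(L)$ lies in a totally geodesic slice $\mathbb{Q}^{n}_{c}\times\{t_{0}\}$. Since $\eta_{\mathbb{Q}}=\eta-\nu\,\dt$ has no $\dt$-component and is normal to $f$, the field $\tilde\eta:=\|T\|^{-1}\eta_{\mathbb{Q}}$ is a unit normal of $f|_{L}$ inside $\mathbb{Q}^{n}_{c}$, and (\ref{eqobs4}) (for $i\le n-1$, where $\langle X_{i},T\rangle=X_{i}(\nu)=0$) shows its shape operator to be $X_{i}\mapsto(\lambda_{i}/\|T\|)X_{i}$. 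Thus each leaf is an isoparametric hypersurface of the space form $\mathbb{Q}^{n}_{c}$ with constant principal curvatures $\kappa_{i}=\lambda_{i}/\|T\|$.

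Next I would exhibit $f$ as a sweep of such a leaf. Using $df(X_{n})=\|T\|^{-1}(\dt-\nu\eta)$ one computes that, along the flow of $X_{n}$, the curve $s\mapsto\pi_{1}(f(x,s))$ moves in the direction of $\tilde\eta$ with speed $|\nu|$ and (from (\ref{eqobs4}) with $i=n$) is a geodesic of $\mathbb{Q}^{n}_{c}$; equivalently, with $r$ the arc length of these geodesics (so $dr=|\nu|\,ds$), the images of the leaves form a family of parallel hypersurfaces of $\mathbb{Q}^{n}_{c}$, while $\pi_{2}\circ f$ moves with speed $\|T\|$ by (\ref{eqobs7}). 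Since $\nu$ never vanishes on $M$, it has a fixed sign, so this normal direction never reverses and the sweep is well defined; $f$ is then built from the isoparametric leaf $L_{0}\subset\mathbb{Q}^{n}_{c}$ and the ``profile curve'' $s\mapsto(r(s),\pi_{2}f(s))$ in a plane, as in the construction of \cite{rot},\cite{RT1}.

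Now I impose that $f$ have constant principal curvatures. The shape operators of a family of parallel hypersurfaces in the space form $\mathbb{Q}^{n}_{c}$ satisfy the Riccati equation $\kappa_{i}'=-(c+\kappa_{i}^{2})$ in $r$; substituting $\kappa_{i}=\lambda_{i}/\|T\|(r)$ with $\lambda_{i}$ constant gives $\|T\|'=\lambda_{i}+c\|T\|^{2}/\lambda_{i}$ for each $i\le n-1$, in particular no such $\lambda_{i}$ vanishes (else $c=0$). On the other hand (\ref{eqobs5})--(\ref{eqobs6}) give $d\|T\|/ds=\nu\lambda_{n}$, hence $d\|T\|/dr=\pm\lambda_{n}$ is constant; so $\|T\|$ is affine in $r$, and substituting an affine function into $\|T\|'=\lambda_{i}+c\|T\|^{2}/\lambda_{i}$ the quadratic term forces its slope to be zero, whence $\|T\|$ is constant, $\lambda_{n}=0$, and $\lambda_{i}^{2}=-c\|T\|^{2}$ for every $i\le n-1$. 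Positivity forces $c=-1$ and $\|T\|=|\lambda_{i}|$, so $\kappa_{i}=\pm1$: each leaf is an isoparametric hypersurface of $\mathbb{H}^{n}$ with principal curvatures in $\{1,-1\}$. By the classification of isoparametric hypersurfaces of $\mathbb{H}^{n}$ (umbilic, or a tube around a totally geodesic subspace, the latter having no principal curvature $\pm1$) such a leaf is a horosphere; in particular all $\lambda_{i}$, $i\le n-1$, coincide, say with $\mu$, and $\mu^{2}=\|T\|^{2}$. Finally, as $\nu$ is a nonzero constant the profile curve has constant curvature, i.e. it satisfies $a'''(1+(a')^{2})-3(a'')^{2}a'=0$, so by Lemma~\ref{lem01} it is a circular arc---precisely the profile of a family of parallel horospheres. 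Reparametrizing by the arc length $s$ of the geodesics of $\mathbb{H}^{n}$ orthogonal to the $h_{s}$ gives $f(x,s)=h_{s}(x)+Bs\,\dt$ with $B:=\|T\|/|\nu|>0$, and $\|T\|^{2}=B^{2}/(1+B^{2})$ yields $\mu=\pm B/\sqrt{1+B^{2}}$ together with $\lambda_{n}=0$, as claimed.

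The main obstacle I anticipate is the second step: verifying carefully from (\ref{eqobs4}) that $s\mapsto\pi_{1}(f(x,s))$ really is a geodesic of $\mathbb{Q}^{n}_{c}$ orthogonal to the leaves, and keeping the orientation signs in the Riccati equation consistent with the conventions behind (\ref{eqcti})--(\ref{eqctn}); together with the short but essential case analysis that simultaneously excludes $c=1$ and the occurrence of more than one distinct principal curvature transverse to $T$.
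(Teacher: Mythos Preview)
Your approach is correct in substance and genuinely different from the paper's. The paper invokes \cite[Theorem~1]{RT1} as a black box to write $f(x,s)=h_{s}(x)+a(s)\,\partial/\partial t$, reads off $\mu_{i}=-(a'/b)\lambda_{i}^{s}$ and $\mu_{n}=a''/b^{3}$ with $b=\sqrt{1+(a')^{2}}$, and then performs a case analysis: if $a''\equiv 0$ one gets $c=-1$ and $\lambda_{i}^{s}=\pm1$ directly, while if $a''\neq 0$ one solves the ODE (\ref{eq3}) via Lemma~\ref{lem01} and checks that the resulting circular profile is incompatible with (\ref{eq2}). Your route instead re-derives the geometric content of \cite{RT1} (leaves of $[T]^{\perp}$ are isoparametric hypersurfaces of $\mathbb{Q}^{n}_{c}$ with principal curvatures $\kappa_{i}=\lambda_{i}/\|T\|$, swept out by normal geodesics), and then uses the Riccati evolution $\kappa_{i}'=-(c+\kappa_{i}^{2})$ together with the observation $d\|T\|/dr=\pm\lambda_{n}$ (constant) to force $\|T\|$ constant, $\lambda_{n}=0$, and $\lambda_{i}^{2}=-c\|T\|^{2}$ in one stroke. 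This is cleaner: it bypasses the $a''\neq0$ case and Lemma~\ref{lem01} entirely, at the cost of having to verify the parallel-family structure yourself rather than quoting \cite{RT1}. An even shorter variant of your idea: from $\nabla_{X_{i}}T=\nu\lambda_{i}X_{i}$ and Codazzi for the pair $(X_{i},X_{n})$ one gets directly $\lambda_{i}(\lambda_{n}-\lambda_{i})=c\|T\|^{2}$, which already forces $\|T\|$ constant and yields the same conclusions without ever writing down the Riccati equation.

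Two small points. First, your closing appeal to Lemma~\ref{lem01} is misplaced: that lemma assumes $a''\neq 0$ and produces circular arcs, whereas you have already shown $\nu$ and $\|T\|$ constant, hence $\lambda_{n}=0$ and the profile is a \emph{straight line} ($a(s)=Bs$); simply drop that sentence. Second, your flagged ``main obstacle'' is real but routine: using (\ref{eqobs4}) with $i=n$ together with (\ref{eqobs5})--(\ref{eqobs6}) one checks that the $\mathbb{Q}^{n}_{c}$-acceleration of $s\mapsto\pi_{1}(f(\cdot,s))$ is a scalar multiple of its velocity $-\nu\tilde\eta$, hence it is a reparametrized geodesic; alternatively, cite \cite[Theorem~1]{RT1} for this step as the paper does.
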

\begin{proof}  By \cite[Theorem 1]{RT1} if  $T$ is a principal direction of  $f$ and  $\nu(p)\neq0$, for all $p\in M$ then  $f$ is locally given by
 $f:M^{n}=M^{n-1}\times I\rightarrow \QR$  with  $f(x,s)~=~ h_{s}(x)+a(s)\dt$, where  $a:I\rightarrow \R$ is a  differentiable function  such that  $a'(s)>0$,  for all   $s\in I$. Moreover $A_{\eta}X=-\displaystyle\frac{a'(s)}{b(s)}A^{s}X$, for all  $X\in TM^{n-1}$, where $A^{s}$ is the shape operator of $h_{s}$. In particular,  $A_{\eta}X_{i}=-\displaystyle\frac{a'(s)}{b(s)}\lambda_{i}^{s}(x)X_{i}$, for the principal directions  $X_{i}\in TM^{n-1}$ of  $h$ and  $A_{\eta}T=\displaystyle\frac{a''(s)}{b^{3}(s)}T$, where  $b(s)=\sqrt{1+a'(s)^{2}}$. Therefore $$A_{\eta}(X_{i})=\mu_{i}(x,s)X_{i},\;\;\;\textrm{ with} \;\;\;\mu_{i}(x,s)=-\displaystyle\frac{a'(s)}{b(s)}\lambda_{i}^{s}(x),$$ for $i\in\{1,...,n-1\}$ and  $A_{\eta}T=\mu_{n}(x,s)T$, with  $\mu_{n}(x,s)=\displaystyle\frac{a''(s)}{b^{3}(s)}$.

 It is known that the relation between the principal curvatures of a hypersurface of $\mathbb{Q}_{c}^{n}$ and the principal curvatures of its
parallel hypersurfaces is given by
$$\lambda_{i}^{s}(x)=\frac{cS_{c}(s)+C_{c}(s)\lambda_{i}(x)}{C_{c}(s)-S_{c}(s)\lambda_{i}(x)}$$
 and therefore
 $$\mu_{i}(x,s)=-\frac{a'(s)}{b(s)}\left(\frac{cS_{c}(s)+C_{c}(s)\lambda_{i}(x)}{C_{c}(s)-S_{c}(s)\lambda_{i}(x)}\right), \;\;\;i \in \{1,...,n-1\}.$$

 Let us analyze under which conditions  the functions $\mu_{i}$ are constant.
Observe that
 \begin{equation}\label{3.4}
 \displaystyle\frac{\partial\lambda_{i}^{s}}{\partial x}=\displaystyle\frac{\lambda'_{i}(x)}{(C_{c}(s)-S_{c}(s)\lambda_{i}(x))^{2}}
 \end{equation}
 and
 \begin{equation}\label{3.5}
 \displaystyle\frac{\partial\lambda_{i}^{s}}{\partial s}=c+(\lambda_{i}^{s}(x))^{2}.
 \end{equation}

For $i\in \{1,...,n-1\}$,
$\displaystyle\frac{\partial\mu_{i}}{\partial x}=-\displaystyle\frac{a'(s)}{b(s)} \displaystyle\frac{\partial\lambda_{i}^{s}}{\partial x}$  and $\displaystyle\frac{\partial\mu_{i}}{\partial x}=0$  if and only if $\displaystyle\frac{\partial\lambda_{i}^{s}}{\partial x}=0$.

Therefore,  by (\ref{3.4}),  $\mu_{i}$ is  constant with respect to  $x$ if   $h$ is  isoparametric,  that is,  if the  functions $\lambda_{i}$ are constant for all  $i\in \{1,...,n-1\}$.

Moreover,
$$ \frac{\partial\mu_{i}}{\partial s}  =  \frac{(-a''b+a'b')}{b^{2}}\lambda^{s}_{i}-\frac{a'}{b}\frac{\partial\lambda_{i}^{s}}{\partial s}
      =  -\frac{a''}{b^{3}}\lambda^{s}_{i}-\frac{a'}{b}(c+(\lambda_{i}^{s})^{2})
      =  \frac{-a''\lambda^{s}_{i}-a'b^{2}(c+(\lambda^{s}_{i})^{2})}{b^{3}}.$$

Then, for $i\in\{1,...,n-1\}$, $\displaystyle\frac{\partial\mu_{i}}{\partial s} =0$ if and only if
 $a''\lambda^{s}_{i}+a'(1+a'^{2})(c+(\lambda^{s}_{i})^{2})=0$.

Now let us analyze the  curvature $\mu_{n}(x,s)=\displaystyle\frac{a''(s)}{b^{3}(s)}$. We have $\displaystyle\frac{\partial\mu_{n}}{\partial x}=0$ and
$$\frac{\partial\mu_{n}}{\partial s}=\frac{a'''b^{3}-3a''b^{2}b'}{b^{6}}=\frac{a'''b-3a''b'}{b^{4}}.$$

Consequently, $\displaystyle\frac{\partial\mu_{n}}{\partial s}=0$ if and only if $a'''b-3a''b'=0$, that is, $ a'''(1+a'^{2})~-~3a''^{2}a'=0$.

Consider for all $s\in I$ and $i\in\{1,...,n-1\}$, the following  equations,
\begin{equation}\label{eq2} a''\lambda^{s}_{i}+a'(1+a'^{2})(c+(\lambda^{s}_{i})^{2})=0\;\;\;\;\;\;\textrm{ and }
\end{equation}
\begin{equation}\label{eq3} a'''(1+a'^{2})-3a''^{2}a'=0.
\end{equation}
As the  function  $a$ is differentiable of  class  $\mathcal{C}^{\infty}$ we may consider just two cases: $a''(s)=0$, for all  $s\in I$ or  $a''(s)\neq0$ for all  $s\in I$,  restricting the interval  $I\in \mathbb{R}$, if necessary.

$\textbf{Case 1}$: Suppose that  $a''(s)=0$ for all $s\in I$. Then  $a'''(s)=0$ and  consequently  the  equation  (\ref{eq3}) holds.
 By equation  (\ref{eq2}) we get  $a'(1+a'^{2})(c+(\lambda^{s}_{i})^{2})=0$. Since  $a'(s)>0$ we conclude that  $c+(\lambda^{s}_{i})^{2}=0$.

 If  $c=1$ then  $1+(\lambda^{s}_{i})^{2}\neq0$. So this case cannot occur for  $c=1$.

 If  $c=-1$ then  $(\lambda^{s}_{i})^{2}=1$. So $\lambda^{s}_{i}=\pm 1$ which implies that $\lambda_{i}=\pm 1$. Moreover, $\mu_{n}=0$ and  $\mu_{i}=\pm\displaystyle\frac{a'}{b}$, for  $i\in \{1,...,n-1\}$.

$\textbf{Case 2}$: Suppose that  $a''(s)\neq0$ for all  $s\in I$. From  Lemma \ref{lem01} the  solutions of the  equation  (\ref{eq3}) are given by  $a(s)=-\displaystyle\frac{\sqrt{1-(c_{1}s+c_{2})^{2}}}{c_{1}}+c_{3}$, where $c_{1}, c_{2}$ and $c_{3}$ are real constant with $c_{1}\neq0$. Let us certify if those solutions  satisfy the  equation  (\ref{eq2}). Observe that

$$a'(s)=\displaystyle\frac{c_{1}s+c_{2}}{\sqrt{1-(c_{1}s+c_{2})^{2}}}\;\;\;\;\;\;\; \textrm{ and } \;\;\;\;\;\;\;a''(s)  = \displaystyle\frac{c_{1}}{(1-(c_{1}s+c_{2})^{2})^{\frac{3}{2}}}. $$
Then
$$
   a'(1+a'^{2}) = \displaystyle\frac{c_{1}s+c_{2}}{(1-(c_{1}s+c_{2})^{2})^{\frac{3}{2}}}.
$$
Thus,
$$a''\lambda^{s}_{i}+a'(1+a'^{2})(c+(\lambda^{s}_{i})^{2})= \displaystyle\frac{c_{1}}{(1-(c_{1}s+c_{2})^{2})^{\frac{3}{2}}}\lambda_{i}^{s}+\displaystyle\frac{c_{1}s+c_{2}}{(1-(c_{1}s+c_{2})^{2})^{\frac{3}{2}}}(c+(\lambda^{s}_{i})^{2}).$$

Then the solutions  $a(s)$ of the equation  (\ref{eq3}) satisfy  (\ref{eq2})  if and only if
\begin{equation}\label{eq4}c_{1}\lambda_{i}^{s}+(c_{1}s+c_{2})(c+(\lambda^{s}_{i})^{2})=0,\end{equation} for all $s\in I$.

Suppose that (\ref{eq4}) holds, for all  $s\in I$. If  $c+(\lambda^{s}_{i})^{2}=0$ then  $c_{1}\lambda_{i}^{s}=0$, i.e., $\lambda_{i}^{s}=0$, for all $s\in I$, since $c_{1}\neq 0$. But,
$$\lambda_{i}^{s}(x)=\frac{cS_{c}(s)+C_{c}(s)\lambda_{i}(x)}{C_{c}(s)-S_{c}(s)\lambda_{i}(x)}=0\;\; \textrm{implies that }\;\; cS_{c}(s)+C_{c}(s)\lambda_{i}(x)=0,$$
for all  $s\in I$, which cannot occur. Then, in this case, it is not possible to have \\ $c+(\lambda^{s}_{i})^{2}~=~0$.

Moreover, by deriving  equation (\ref{eq4}) we obtain  $$\displaystyle\frac{\partial }{\partial s}(c_{1}\lambda_{i}^{s}+(c_{1}s+c_{2})(c+(\lambda^{s}_{i})^{2}))=c_{1}\displaystyle\frac{\partial \lambda_{i}^{s}}{\partial s}+ c_{1}(c+(\lambda^{s}_{i})^{2})+2(c_{1}s+c_{2})\lambda_{i}^{s} \displaystyle\frac{\partial \lambda_{i}^{s}}{\partial s}=0.$$
By  equation  (\ref{3.5}), it follows that
$$2c_{1}(c+(\lambda^{s}_{i})^{2})+2(c_{1}s+c_{2})\lambda_{i}^{s}(c+(\lambda^{s}_{i})^{2}) = 2(c+(\lambda^{s}_{i})^{2})\left(c_{1}+(c_{1}s+c_{2})\lambda_{i}^{s}\right)=0.
$$

Consequently, $c_{1}+(c_{1}s+c_{2})\lambda_{i}^{s}=0$.
Since   $c_{1}s+c_{2}>0$, for all  $s\in I$ we have $\lambda_{i}^{s}=\displaystyle\frac{-c_{1}}{c_{1}s+c_{2}}$.
Thus,  in one way, $$\displaystyle\frac{\partial \lambda_{i}^{s}}{\partial s}=\frac{(c_{1})^{2}}{(c_{1}s+c_{2})^{2}}=(\lambda_{i}^{s})^{2}.$$
But, on the other way, from  (\ref{3.5}) we have that  $\displaystyle\frac{\partial \lambda_{i}^{s}}{\partial s}=c+(\lambda_{i}^{s})^{2}$. Consequently  $c=0$ which cannot occur since we are considering only  $c=1$ or $c=-1$.
So equation  (\ref{eq4})  does not hold and the solutions of the equation  (\ref{eq3}) are not solutions of the equation  (\ref{eq2}), with the condition   $a''(s)\neq0$, for all  $s\in I$.

Thus we conclude that  $a''=0$ and  $a(s)=Bs$ with  $B\in \mathbb{R}$, $B>0$, since  $a'(s)>0$ and  $\nu$ is constant, from  \cite[Corollary 2]{RT1}.
Therefore $\lambda=\mu_{n}=0$ and $\mu_{i}=\displaystyle\frac{B}{\sqrt{1+B^{2}}}$ or $\mu_{i}=-\displaystyle\frac{B}{\sqrt{1+B^{2}}}$, for $i\in \{1,\ldots,n-1\}$.
\end{proof}

\begin{remark}\label{obsrot}
 From \cite[Proposition 20]{Manfio}, for $n\geq3$, and \cite[Remark 7 (i)]{RT1}, for $n=2$, the hypersurfaces  given in  Theorem \ref{ctdp}, are rotational hypersurfaces in  $\mathbb{H}^{n}\times\mathbb{R}$ for which the orbits are horospheres.
\end{remark}

\section{Multiplicities of the principal curvatures}\label{sec:umbilicity results}

In this section we discuss  some results about the multiplicities of the principal curvatures of hypersurfaces in $\QR$.
\begin{theorem}\label{m1} Let  $\f$, $n\geq 2$ be a  non umbilical hypersurface having  constant principal curvatures with  constant  multiplicities and suppose that its function $\nu\neq 0$. Then it has at least one principal  curvature of multiplicity one.
\end{theorem}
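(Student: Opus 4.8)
The idea is to work in the class of hypersurfaces where $T$ is a principal direction, use the results of Section~6, and argue by contradiction. First I would recall that since $\nu \neq 0$ and $f$ has constant principal curvatures with constant multiplicities, a frame of differentiable principal directions exists by the Corollary to Theorem~\ref{erdp}; and I would want to reduce to the situation treated in Theorem~\ref{ctdp}. For that reduction the key point is to show that $T$ must be a principal direction. Here one uses the Codazzi equation~(\ref{eq02}): if $X_i, X_j$ are principal directions with distinct principal curvatures $\lambda_i \neq \lambda_j$ and both orthogonal to $T$, then comparing the tangential components of $(\nabla_{X_i}A)X_j - (\nabla_{X_j}A)X_i = c\nu(X_i\wedge X_j)T$ shows the right-hand side vanishes (since $\langle X_j,T\rangle = \langle X_i,T\rangle = 0$), which forces the usual Codazzi relations among the connection coefficients; combined with~(\ref{eq04}), which says $X(\nu) = -\langle AX,T\rangle$, one derives that $AT$ is proportional to $T$. (If this is not immediate I would instead invoke that the hypersurfaces in question, having constant principal curvatures, are isoparametric, and appeal to whatever earlier structure forces $T$ principal — but the cleanest route is the direct Codazzi computation.)

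Once $T$ is a principal direction, Theorem~\ref{ctdp} applies: since $\nu \neq 0$ everywhere, $c = -1$ and, up to the local description $f(x,s) = h_s(x) + Bs\,\partial/\partial t$, the principal curvature associated to $T$ is $0$ and all the remaining principal curvatures are equal — either all $B/\sqrt{1+B^2}$ or all $-B/\sqrt{1+B^2}$. In particular, in that case $f$ has exactly $g = 2$ distinct constant principal curvatures: one (namely $0$) of multiplicity one, associated to $T$, and the other of multiplicity $n-1$. (The hypersurface is non-umbilical by hypothesis, and $\nu \neq 0$ rules out the slice case where $T \equiv 0$.) This already exhibits a principal curvature of multiplicity one, which is exactly the assertion of Theorem~\ref{m1}.

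So the structure of the proof is: (1) invoke the frame of principal directions; (2) show $T$ is a principal direction using Codazzi~(\ref{eq02}) and~(\ref{eq04}); (3) apply Theorem~\ref{ctdp} to get the explicit form of the curvatures; (4) read off that the curvature $0$ attached to $T$ has multiplicity one, using that $\nu \neq 0$ (so $T \neq 0$ by~(\ref{vt1})) and that $f$ is not umbilical (so the multiplicity-$(n-1)$ curvature is genuinely distinct from $0$, i.e. $g = 2$ and not $g = 1$).

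\textbf{Main obstacle.} The delicate step is (2), establishing that $T$ is automatically a principal direction purely from constancy of the principal curvatures and $\nu \neq 0$. The Codazzi equation gives $(\nabla_X A)Y - (\nabla_Y A)X = c\nu(X\wedge Y)T$, and one must extract from it — using that all $\lambda_i$ are constant, so $X_k(\lambda_i) = 0$ — that the component of $AT$ orthogonal to $T$ vanishes. Writing $AT = \sum_i \langle AT, X_i\rangle X_i$ and feeding the structure equations for $\nabla_{X_i} X_j$ into Codazzi, the off-diagonal terms of $A$ in the principal frame are controlled by the connection coefficients, and one shows that a nonzero $\langle AT, X_i\rangle$ for some $X_i \perp T$ with $\lambda_i \neq \lambda_n$ leads to a contradiction with the constancy of the $\lambda$'s (via~(\ref{eq04}), $X_i(\nu) = -\langle AX_i, T\rangle = -\langle AT, X_i\rangle$, and differentiating again). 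If a fully self-contained argument for (2) proves too long here, I would instead cite it from Theorem~\ref{ctdp}'s hypotheses or from the surrounding classification framework, since the paper's overall strategy is precisely to place everything inside the "$T$ principal" class.
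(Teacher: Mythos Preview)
Your strategy hinges entirely on step~(2), establishing that $T$ is a principal direction from the hypotheses of Theorem~\ref{m1} alone, and this is a genuine gap. The Codazzi computation you sketch does not yield this: applying~(\ref{eq02}) to principal directions $X_i,X_j$ lying in \emph{different} eigenspaces produces relations among the connection coefficients $\langle\nabla_{X_i}X_j,X_k\rangle$, not the conclusion $AT\parallel T$. Indeed, the paper never proves ``$T$ is principal'' at this level of generality; that fact is established only \emph{later} (Proposition~\ref{dp}), and only under the extra hypothesis that exactly one curvature has multiplicity one --- precisely the conclusion of Theorem~\ref{m1} you are trying to prove. So invoking Theorem~\ref{ctdp} here is circular: its hypothesis that $T$ be principal is not available, and your fallback of ``citing it from the surrounding classification framework'' would be citing results that logically depend on Theorem~\ref{m1}.

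The paper's actual argument is much shorter and entirely different. It argues by contradiction: assume every principal curvature has multiplicity $\geq 2$, fix an eigenspace $D_{\lambda_\rho}$, and apply Codazzi~(\ref{eq02}) to two directions $X_i,X_j$ \emph{in the same} $D_{\lambda_\rho}$. Because $AX_i=\lambda_\rho X_i$ and $AX_j=\lambda_\rho X_j$ with $\lambda_\rho$ constant, the left side of~(\ref{eq02}) has no component along $X_i$ or $X_j$, while the right side $c\nu(b_jX_i-b_iX_j)$ does unless $b_i=b_j=0$. Since $\nu\neq 0$, this forces the $T$-components along every eigenspace of multiplicity $\geq 2$ to vanish; if all eigenspaces have multiplicity $\geq 2$, then $T=0$, so $f$ is a slice and hence umbilical, contradicting the hypothesis. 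The key idea you are missing is to pair two vectors inside one eigenspace rather than across eigenspaces.
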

\begin{proof}Let  $\{X_{1}, X_{2}, ..., X_{n}\}$ be a local orthonormal frame field of principal directions of  $f$. It is possible to write  $T=\displaystyle\sum_{i=1}^{n}b_{i}X_{i}$. As   $g$ is  the number of distinct principal curvatures and   $f$ is non umbilical then $g\geq 2$.

If  $n=2$ then there exist  two distinct principal curvatures  and each one has multiplicity equal to $1$.

If  $n=3$ then  $g=2$ or $g=3$. If  $g=2$ one of the curvatures has multiplicity  equal to $2$ and the other one  has multiplicity equal to $1$. If  $g=3$ each curvature has multiplicity equal to $1$.

If  $n\geq 4$ suppose that all the principal curvatures have  multiplicity greater than or equal to $2$. In this case  $2\leq g\leq \displaystyle\frac{n}{2}$, if  $n$ is an even number and   $2\leq g\leq\displaystyle\frac{n-1}{2}$ if  $n$ is an odd number. For a given  $\rho\in \{1,...,g\}$ consider $B_{\rho}=\{i\in\{1,...,n\}/ AX_{i}=\lambda_{\rho}X_{i}\}$. Observe that  $B_{\rho}$ has  at least two elements.

For a given  $\rho$, consider the Codazzi equation,
\begin{equation}\label{eq0}
\nabla_{X_{i}}AX_{j}- \nabla_{X_{j}}AX_{i}-A[X_{i},X_{j}]=c\nu (b_{j}X_{i}-b_{i}X_{j}),
\end{equation}
for  $i,j\in B_{\rho}$.

We have  \; $\nabla_{X_{i}}AX_{j}- \nabla_{X_{j}}AX_{i}=\lambda_{\rho}[X_{i},X_{j}]=\lambda_{\rho}\displaystyle\sum_{k=1}^{n}\langle [X_{i},X_{j}],X_{k}\rangle X_{k}$\; and  \linebreak $A[X_{i},X_{j}]=\displaystyle\sum_{k=1}^{n}\langle [X_{i},X_{j}],X_{k}\rangle AX_{k}$. Thus,
\begin{equation}\label{eq3.4.10}
\nabla_{X_{i}}AX_{j}- \nabla_{X_{j}}AX_{i}-A[X_{i},X_{j}]=\sum_{k\not\in B_{\rho}}\langle [X_{i},X_{j}],X_{k}\rangle(\lambda_{\rho}X_{k}- AX_{k}).
\end{equation}
From equations  (\ref{eq0}) and  (\ref{eq3.4.10}),  we get
$$\sum_{k\not\in B_{\rho}}\langle [X_{i},X_{j}],X_{k}\rangle(\lambda_{\rho}-\lambda_{k})X_{k}-c\nu b_{j}X_{i}+c\nu b_{i}X_{j}=0.$$
Since  $i,j\in B_{\rho}$ with  $i\neq j$,  $k\not\in B_{\rho}$ and  $\nu\neq0$ we should have  $b_{i}=b_{j}=0$ for all  $i,j\in B_{\rho}$, that is, $T$ does not have components in the directions corresponding to principal curvatures whose multiplicities are greater than or equal to $2$. So, assuming that there does not exist principal curvatures whose multiplicities are one,  we conclude that  $T=0$.  Finally we conclude that   $f(M^{n})$ is an open subset of a slice   $\mathbb{Q}^{n}_{c}\times \{t\}$ and thus is totally geodesic.  But this is against the hypothesis  $g\geq 2$.  So $f$ has at least one principal curvature  with multiplicity  one.
\end{proof}

\begin{remark}\label{obs31}
From the proof of the previous theorem  we infer that $T$ has no components  in the  directions whose correspondent curvatures  have multiplicities greater than or equal to $2$.
\end{remark}

\begin{remark} Theorem \ref{m1} holds also for   $\nu\equiv0$ if  $c=-1$ since the corresponding curvature of the factor  $\mathbb{R}$ is  $\lambda=0$  and the others curvatures are non zero by   \cite[Theorem 5]{DF}. It is true also for  $\nu\equiv0$ and  $c=1$ if  $g=2$ and $g=3$, excluding the cases  when  $f(M^{n})$ is an open set of  $M^{n-1}\times\mathbb{R}$ where  $M^{n-1}$ is a Cartan's hypersurface for $n\in\{7,13,25\}$.
\end{remark}

\begin{proposition}\label{dp} Let  $\f$, $n\geq 3$ be a hypersurface with  constant principal curvatures and respective multiplicities also constant having   function $\nu \neq 0$. If just one principal curvature has multiplicity  equal to one then the vector field  $T$ is a principal direction corresponding to that curvature. Moreover,  all the curvatures  having multiplicity greater than one do not vanish.
\end{proposition}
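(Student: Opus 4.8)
The plan is to show first that the stated hypothesis forces $T$ to lie inside the (necessarily one‑dimensional) eigendistribution of the principal curvature whose multiplicity is one, and then to apply the classification of Theorem~\ref{ctdp}. As preliminary observations: $f$ cannot be umbilical, for if $g=1$ the single principal curvature would have multiplicity $n\geq3>1$, so no principal curvature would have multiplicity one, against the assumption; hence $g\geq2$, and in particular $T\not\equiv0$ (otherwise Proposition~\ref{triviais}(i) would put $f(M^n)$ in a slice and give $g=1$). Also, since $g\geq2$ and only one curvature has multiplicity one, at least one principal curvature has multiplicity $\geq2$, so the next step is not vacuous. Now let $\lambda_{1}$ denote the principal curvature of multiplicity one and take, via Theorem~\ref{erdp} and its Corollary, a local orthonormal frame $\{X_{1},\dots,X_{n}\}$ of principal directions ordered so that $AX_{1}=\lambda_{1}X_{1}$; since $\lambda_1$ has multiplicity one, $X_1$ spans the whole $\lambda_1$-eigenspace. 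Writing $T=\sum_{i}b_{i}X_{i}$, the Codazzi computation carried out in the proof of Theorem~\ref{m1} (recorded in Remark~\ref{obs31}), which uses only $\nu\neq0$ and applies whenever a principal curvature has multiplicity $\geq2$, gives $b_{i}=0$ for every index $i$ in the eigenspace of a principal curvature of multiplicity at least two. As the only remaining eigendirection is $X_{1}$, we conclude $T=b_{1}X_{1}$, hence $AT=\lambda_{1}b_{1}X_{1}=\lambda_{1}T$; thus $T$ is a principal direction with associated principal curvature $\lambda_{1}$, which is the first assertion.

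I would then feed this into Theorem~\ref{ctdp}: $T$ is now a principal direction and $\nu(p)\neq0$ for all $p\in M$, so that theorem applies and yields $c=-1$, the local form $f(x,s)=h_{s}(x)+Bs\,\dt$ with $B>0$ and $h_{s}$ a family of horospheres in $\mathbb{H}^{n}$, the vanishing $\lambda_{1}=0$ of the curvature associated with $T$, and the fact that the remaining principal curvatures are all equal to $B/\sqrt{1+B^{2}}$ or all equal to $-B/\sqrt{1+B^{2}}$. (As a byproduct $\nu$ is a nonzero constant, so $\|T\|^{2}=1-\nu^{2}>0$ and $T$ never vanishes, which makes the statement ``$T$ is a principal direction'' meaningful at every point.) Consequently $M^{n}$ has exactly two distinct constant principal curvatures, namely $\lambda_{1}=0$ with multiplicity one, realised by the direction $T$, and $\mu=\pm B/\sqrt{1+B^{2}}$ with multiplicity $n-1\geq2$. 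Since $B>0$ we have $\mu\neq0$, so the only principal curvature of multiplicity greater than one is nonzero, which proves the ``Moreover'' part.

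The main obstacle is the first step: extracting from the Codazzi equation that $T$ has no component along the eigendistributions of the higher‑multiplicity curvatures. This is exactly the content of Remark~\ref{obs31}, so it can be quoted rather than redone; after that, the argument is a direct appeal to Theorem~\ref{ctdp}. The only residual technical point — whether $T$ might vanish at some points — is harmless and is in any case settled a posteriori by the constancy of $\nu$ produced by that classification.
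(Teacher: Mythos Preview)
Your proof is correct, and its first half (showing that $T$ is a principal direction for the multiplicity-one curvature) is exactly the paper's argument: both of you invoke Remark~\ref{obs31} directly.

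For the ``Moreover'' part the two arguments diverge. You appeal to the classification Theorem~\ref{ctdp}, which is legitimate since it is proved earlier and independently of Proposition~\ref{dp}; it hands you the remaining curvatures explicitly as $\pm B/\sqrt{1+B^{2}}$ with $B>0$, so they are automatically nonzero. The paper instead runs a direct local computation: writing $T=bX_{n}$ with $AX_{n}=\lambda X_{n}$, it combines the Codazzi equation for the pair $(X_{n},X_{i})$ (with $X_i$ in a higher-multiplicity eigenspace, curvature $\mu_\rho$) and the structural relation $\nabla_{X_i}T=\nu AX_i$ from~(\ref{eq03}) to obtain $\langle\nabla_{X_i}X_n,X_i\rangle^{2}(\lambda-\mu_{\rho})=c\nu^{2}\mu_{\rho}$ together with the equivalence $b=0\Leftrightarrow\mu_{\rho}=0$; then $\mu_{\rho}=0$ would force $T\equiv0$, hence $f$ totally geodesic, contradicting $g\geq2$. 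Your route is shorter and extracts extra information as a byproduct ($c=-1$, $g=2$, the curvature attached to $T$ vanishes), at the price of importing the full classification; the paper's route is more elementary and self-contained, proving only what the statement asserts without determining the hypersurface.
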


\begin{proof} Let $\{X_{1}, X_{2}, ..., X_{n}\}$ be a local orthonormal frame of principal directions of  $f$. Suppose, without loss of generality, that  $X_{n}$ is associated to  $\lambda$, i.e., $AX_{n}=\lambda X_{n}$. By Remark \ref{obs31} if  $\lambda$ is the only curvature  of multiplicity one then   $T=bX_{n}$ where  $b:U~\subset~ M^{n}\rightarrow\mathbb{R}$ is a differentiable function defined on an open subset  $U\subset M^{n}$, where the fields  $X_{1}, X_{2}, ..., X_{n}$ are defined.

From the hypothesis we know that $g\geq2$. For a given  $\rho\in \{1,...,g-1\}$ let  $B_{\rho}=\{i\in\{1,...,n\}/ AX_{i}=\mu_{\rho}X_{i}\}$ with  $\mu_{\rho}\neq\lambda=\mu_{g}$. Observe that $B_{\rho}$ has at least two elements.  Consider the   Codazzi equation
\begin{equation}\label{eq3.4.51}
 \nabla_{X_{n}}AX_{i}- \nabla_{X_{i}}AX_{n}-A[X_{n},X_{i}]=c\nu (b_{i}X_{n}-b_{n}X_{i}),
\end{equation}
for $i\in B_{\rho}$.
We have
\begin{equation}\label{eq3.4.52}
\nabla_{X_{n}}AX_{i}- \nabla_{X_{i}}AX_{n}=\mu_{\rho}\nabla_{X_{n}}X_{i}- \lambda\nabla_{X_{i}}X_{n}\;\;\;\textrm{ and }\;\;\;
\end{equation}
\begin{equation}\label{eq3.4.53}
 A[X_{n},X_{i}]=\sum_{k=1}^{n}\langle \nabla_{X_{n}}X_{i}-\nabla_{X_{i}}X_{n}, X_{k}\rangle AX_{k}.
\end{equation}
Therefore from equations  (\ref{eq3.4.52}) and  (\ref{eq3.4.53}) we get
$$\begin{array}{rcl}
\nabla_{X_{n}}AX_{i}- \nabla_{X_{i}}AX_{n}-A[X_{n},X_{i}] &=&\displaystyle\sum_{k=1}^{n}\langle \nabla_{X_{n}}X_{i},X_{k}\rangle(\mu_{\rho}X_{k}-AX_{k}) \\
& & -\displaystyle\sum_{k=1}^{n}\langle\nabla_{X_{i}}X_{n}, X_{k}\rangle (\lambda X_{k}-AX_{k}).
\end{array}$$
Now using   (\ref{eq3.4.51}) we get
$$\sum_{k\not\in B_{\rho}}\langle \nabla_{X_{n}}X_{i},X_{k}\rangle(\mu_{\rho}-\mu_{k})X_{k} -\sum_{k\neq n}\langle\nabla_{X_{i}}X_{n}, X_{k}\rangle (\lambda-\mu_{k} )X_{k}-c\nu b_{i}X_{n}+c\nu b_{n}X_{i}=0.$$

Then
\begin{equation}\label{eq1}
c\nu b-\langle\nabla_{X_{i}}X_{n}, X_{i}\rangle (\lambda-\mu_{\rho})=0,\;\;\;\forall i\in B_{\rho},
\end{equation}
and $b=0\Leftrightarrow \langle\nabla_{X_{i}}X_{n}, X_{i}\rangle=0$ for $i\in B_{\rho}$.

 By (\ref{eq03}),
$\nu\mu_{\rho}X_{i}=\nabla_{X_{i}}T=\nabla_{X_{i}}bX_{n}=X_{i}(b)X_{n}+b\nabla_{X_{i}}X_{n},\;i\in \beta_{\rho},$
which implies that  $\nu\mu_{\rho}\langle X_{i},X_{i}\rangle=X_{i}(b)\langle X_{n},X_{i}\rangle+b\langle\nabla_{X_{i}}X_{n},X_{i}\rangle$,
i.e., $b\langle\nabla_{X_{i}}X_{n},X_{i}\rangle=\nu\mu_{\rho}$.
Then  $c\nu b\langle\nabla_{X_{i}}X_{n},X_{i}\rangle=c\nu^{2}\mu_{\rho}$
and by equation  (\ref{eq1}), $$\langle\nabla_{X_{i}}X_{n}, X_{i}\rangle^{2} (\lambda-\mu_{\rho})=c\nu^{2}\mu_{\rho}.$$
Consequently
$$b=0\Leftrightarrow\langle\nabla_{X_{i}}X_{n}, X_{i}\rangle=0,\;\;\;\forall i \in B_{\rho}\Leftrightarrow \mu_{\rho}=0.$$

If  $\mu_{\rho}=0$ then  $b=0$ and  $T=0$, that is, $f(M)$ is totally geodesic, which is impossible since $g\geq2$. Then  $T$ is a  principal direction and all the curvatures of multiplicity greater than one do not vanish.  \end{proof}

From  Theorem \ref{ctdp} we obtain the converse of  Proposition \ref{dp} and next result is also true.

\begin{proposition} Let   $\f$, $n\geq 3$ with function  $\nu\neq 0$ be a hypersurface having constant principal curvatures with constant  multiplicities. Then the vector field  $T$ is a principal direction  if and only if there exist only one principal curvature of multiplicity one. Moreover, all the curvatures of multiplicity greater than one do not vanish.
\end{proposition}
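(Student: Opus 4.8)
The plan is to prove the equivalence and the non-vanishing statement by combining the two preceding results, Proposition~\ref{dp} and Theorem~\ref{ctdp}, with the trivial classification of slices in Proposition~\ref{triviais}. The forward direction of the equivalence is essentially already done: if $T$ is a principal direction, Proposition~\ref{dp}'s hypothesis that ``just one principal curvature has multiplicity one'' cannot fail in a useful way, so I first need to establish that under the standing hypotheses ($\nu\neq 0$, constant principal curvatures, constant multiplicities) the assumption that $T$ is a principal direction forces exactly one curvature to have multiplicity one. This is where Theorem~\ref{ctdp} enters.

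First I would invoke Theorem~\ref{ctdp}: since $\nu(p)\neq 0$ everywhere and $T$ is a principal direction with constant principal curvatures, the classification there tells us $c=-1$ and $f(x,s)=h_s(x)+Bs\,\dt$ with the $h_s$ horospheres, the $T$-curvature being $0$ and all remaining principal curvatures equal to $\pm B/\sqrt{1+B^2}$. In particular there are exactly $g=2$ distinct principal curvatures: the value $0$ with multiplicity one (in the direction $T=\|T\|X_n$) and the value $\pm B/\sqrt{1+B^2}$ with multiplicity $n-1$. This directly gives that there is precisely one principal curvature of multiplicity one; it also shows the curvature of multiplicity $n-1>1$ is nonzero since $B>0$. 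So the ``only if'' implication and the ``moreover'' clause both follow from Theorem~\ref{ctdp}.

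For the converse, suppose there is exactly one principal curvature of multiplicity one. This is exactly the hypothesis of Proposition~\ref{dp}, whose conclusion is that $T$ is a principal direction corresponding to that curvature and that all curvatures of multiplicity $>1$ do not vanish. So the ``if'' direction and again the ``moreover'' clause are immediate. The only subtlety is to make sure the two halves fit together logically: I should note that the case where \emph{no} principal curvature has multiplicity one (handled in Theorem~\ref{m1}, which forces $T=0$ contradicting $\nu\neq 0$ combined with a slice, or rather the non-umbilical assumption) does not arise, and that if $T$ is a principal direction one cannot have two or more curvatures of multiplicity one without contradicting Theorem~\ref{ctdp}'s count $g=2$ — though in fact the statement only claims equivalence with ``exactly one,'' so I merely need: $T$ principal $\Rightarrow$ (by Thm~\ref{ctdp}) exactly one multiplicity-one curvature; and exactly one multiplicity-one curvature $\Rightarrow$ (by Prop~\ref{dp}) $T$ principal.

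The main obstacle, such as it is, is bookkeeping rather than mathematics: I must be careful that Theorem~\ref{ctdp} is stated for $n\geq 2$ while the present proposition assumes $n\geq 3$, so its hypotheses are met, and that ``constant multiplicities'' is genuinely used (it is needed to even speak of $g$ and to apply Theorem~\ref{erdp} for the frame of principal directions underlying both cited results). I would also remark that the non-umbilicity needed to run Theorem~\ref{m1}-type reasoning is automatic here: if $f$ were umbilical then $g=1$ and there would be no curvature of multiplicity one unless $n=1$, so the equivalence holds vacuously or is excluded by $n\geq 3$; alternatively an umbilical hypersurface of $\QR$ with $\nu\neq 0$ would be a slice or product, handled by Proposition~\ref{triviais}, and there $T$ is (trivially) a principal direction. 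Wrapping these edge cases cleanly is the only thing requiring attention; the substantive content is entirely carried by the two results already proved.
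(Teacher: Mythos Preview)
Your proposal is correct and follows essentially the same route as the paper: the paper's proof is the single sentence ``From Theorem~\ref{ctdp} we obtain the converse of Proposition~\ref{dp},'' and you do exactly this, invoking Proposition~\ref{dp} for the implication (exactly one multiplicity-one curvature $\Rightarrow$ $T$ principal) and Theorem~\ref{ctdp} for the reverse. Your additional discussion of edge cases and the observation that Theorem~\ref{ctdp} forces $g=2$ (since the $h_s$ are horospheres, hence umbilical, so the remaining $n-1$ curvatures share a common value $\pm B/\sqrt{1+B^2}$) is more explicit than the paper but not a different argument.
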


\section{Hypersurfaces of $\QR$ with constant principal curvatures for $g\in\{1,2,3\}$}

 In this section we present a result that classifies hypersurfaces with constant principal curvatures. For this we need some propositions.

\begin{proposition}\label{prop21} Let $f:M^{2}\rightarrow\mathbb{Q}^{2}_{c}\times\mathbb{R}$ be a surface with two distinct constant principal curvatures  $\lambda_{1}$ and $\lambda_{2}$. Let  $\{X_{1}, X_{2}\}$ be an orthonormal frame of principal directions corresponding to  $\lambda_{1}$ and $\lambda_{2}$. Consider $T=b_{1}X_{1}+b_{2}X_{2}$, where $b_{1},b_{2}:M^{2}\rightarrow \mathbb{R}$ are differentiable functions. Then
 \begin{equation} \label{eq2.21}
 \lambda_{1}\lambda_{2}+2c\nu^{2}+\displaystyle\frac{c(\lambda_{1}b_{1}^{2}-\lambda_{2}b_{2}^{2})}{\lambda_{2}-\lambda_{1}}
+\displaystyle\frac{2\nu^{2}(b_{1}^{2}+b_{2}^{2})}{(\lambda_{2}-\lambda_{1})^{2}}=0.
 \end{equation}
 \end{proposition}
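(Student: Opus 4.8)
The plan is to compute the Gaussian curvature $K$ of $M^{2}$ in two ways and compare. For the extrinsic side, I would evaluate the Gauss equation (\ref{eq01}) on the triple $X_1,X_2,X_2$. Using $AX_i=\lambda_i X_i$, $\langle X_1,T\rangle=b_1$, $\langle X_2,T\rangle=b_2$, and the normalization $\nu^{2}+b_1^{2}+b_2^{2}=1$ (which follows from (\ref{vt1}) since $\{X_1,X_2\}$ is orthonormal), the three $\wedge$-terms combine into $c(1-b_1^{2}-b_2^{2})X_1=c\nu^{2}X_1$, and one reads off $K=\langle R(X_1,X_2)X_2,X_1\rangle=\lambda_1\lambda_2+c\nu^{2}$.

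For the intrinsic side, since $\{X_1,X_2\}$ is orthonormal there are functions $\alpha,\beta$ with $\nabla_{X_1}X_1=\alpha X_2$, $\nabla_{X_2}X_1=\beta X_2$ (hence $\nabla_{X_1}X_2=-\alpha X_1$, $\nabla_{X_2}X_2=-\beta X_1$), and the structure equations give $K=X_2(\alpha)-X_1(\beta)-\alpha^{2}-\beta^{2}$. The crucial step is that the Codazzi equation (\ref{eq02}) applied to the pair $(X_1,X_2)$, together with the constancy of $\lambda_1,\lambda_2$, reduces to $\alpha(\lambda_1-\lambda_2)=c\nu b_2$ and $\beta(\lambda_1-\lambda_2)=c\nu b_1$; thus $\alpha$ and $\beta$ are completely determined, where I use $\lambda_1\neq\lambda_2$.

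It remains to differentiate $\alpha$ and $\beta$. The derivatives of $\nu$ come directly from (\ref{eq04}): $X_1(\nu)=-\lambda_1 b_1$, $X_2(\nu)=-\lambda_2 b_2$. The derivatives $X_1(b_1)$ and $X_2(b_2)$ come from expanding $\nabla_{X_i}T=\nu AX_i$ (equation (\ref{eq03})) in the frame and reading off components, which gives $X_1(b_1)=\nu\lambda_1+\alpha b_2$ and $X_2(b_2)=\nu\lambda_2-\beta b_1$ (the off-diagonal components are not needed). Substituting $\alpha=c\nu b_2/(\lambda_1-\lambda_2)$ and $\beta=c\nu b_1/(\lambda_1-\lambda_2)$ into $K=X_2(\alpha)-X_1(\beta)-\alpha^{2}-\beta^{2}$, and simplifying using $c^{2}=1$ and $b_1^{2}+b_2^{2}=1-\nu^{2}$, yields $K=-c\nu^{2}+c(\lambda_1 b_1^{2}-\lambda_2 b_2^{2})/(\lambda_1-\lambda_2)-2\nu^{2}(b_1^{2}+b_2^{2})/(\lambda_1-\lambda_2)^{2}$. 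Equating this with $\lambda_1\lambda_2+c\nu^{2}$, transposing all terms to one side, and rewriting $1/(\lambda_1-\lambda_2)=-1/(\lambda_2-\lambda_1)$ and $(\lambda_1-\lambda_2)^{2}=(\lambda_2-\lambda_1)^{2}$, produces exactly (\ref{eq2.21}).

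The only obstacle is bookkeeping: the final substitution into the formula for $K$ and the ensuing simplification is a somewhat lengthy but entirely mechanical algebraic manipulation. The conceptual content is the double computation of $K$ and the observation that the Codazzi equation pins down the connection coefficients of the principal frame in terms of $b_1$, $b_2$ and $\nu$.
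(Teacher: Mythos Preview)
Your proposal is correct and follows essentially the same approach as the paper: compute $K=\langle R(X_1,X_2)X_2,X_1\rangle$ extrinsically via the Gauss equation (\ref{eq01}) to get $\lambda_1\lambda_2+c\nu^2$, use Codazzi (\ref{eq02}) to determine the two connection coefficients of the principal frame, differentiate using (\ref{eq03}) and (\ref{eq04}), and equate with the intrinsic expression for $K$. The only difference is packaging---you name the coefficients $\alpha,\beta$ and use the single structure formula $K=X_2(\alpha)-X_1(\beta)-\alpha^2-\beta^2$, whereas the paper computes the three terms $\langle\nabla_{X_1}\nabla_{X_2}X_2,X_1\rangle$, $\langle\nabla_{X_2}\nabla_{X_1}X_2,X_1\rangle$, $\langle\nabla_{[X_1,X_2]}X_2,X_1\rangle$ separately; the underlying computation is identical.
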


\begin{proof}
From  Codazzi equation we get
\begin{equation}\label{eq4.1}\nabla_{X_{1}}AX_{2}- \nabla_{X_{2}}AX_{1}-A[X_{1},X_{2}]=c\nu (b_{2}X_{1}-b_{1}X_{2}).
\end{equation}

Observe that
$$
\nabla_{X_{1}}AX_{2}=\lambda_{2}\nabla_{X_{1}}X_{2}=\lambda_{2}\langle\nabla_{X_{1}}X_{2},X_{1}\rangle X_{1},
$$
since  $X_{1}\langle X_{2},X_{2}\rangle=0$.
In a similar way we get
$$
\nabla_{X_{2}}AX_{1}=\lambda_{1}\langle\nabla_{X_{2}}X_{1},X_{2}\rangle X_{2}.
$$
Thus,
$$
A[X_{1},X_{2}]=A(\nabla_{X_{1}}X_{2}-\nabla_{X_{2}}X_{1})=\lambda_{1}\langle\nabla_{X_{1}}X_{2},X_{1}\rangle X_{1}-\lambda_{2}\langle\nabla_{X_{2}}X_{1},X_{2}\rangle X_{2}.
$$
From equation  (\ref{eq4.1}),
$$
(\lambda_{2}-\lambda_{1})\langle\nabla_{X_{1}}X_{2},X_{1}\rangle X_{1}+(\lambda_{2}-\lambda_{1})\langle\nabla_{X_{2}}X_{1},X_{2}\rangle X_{2}=c\nu (b_{2}X_{1}-b_{1}X_{2}),
$$
which implies, as $X_{1}$ and  $X_{2}$ are linearly independent fields, that
$$(\lambda_{2}-\lambda_{1})\langle\nabla_{X_{1}}X_{2},X_{1}\rangle = c\nu b_{2}\;\;\; \textrm{ and } \;\;\; (\lambda_{2}-\lambda_{1})\langle\nabla_{X_{2}}X_{1},X_{2}\rangle=-c\nu b_{1},$$ that is,
\begin{equation}\label{eq4.2}\begin{array}{ccc}
           \nabla_{X_{1}}X_{2} & = &\displaystyle\frac{c\nu b_{2}}{(\lambda_{2}-\lambda_{1})}X_{1},   \\
           \nabla_{X_{2}}X_{1} & = &\displaystyle\frac{-c\nu b_{1}}{(\lambda_{2}-\lambda_{1})}X_{2} .
         \end{array}
\end{equation}
Thus
\begin{equation}\label{eq4.21}\begin{array}{ccc}
           \nabla_{X_{1}}X_{1} & = &\displaystyle\frac{-c\nu b_{2}}{(\lambda_{2}-\lambda_{1})}X_{2},   \\
           \nabla_{X_{2}}X_{2} & = &\displaystyle\frac{c\nu b_{1}}{(\lambda_{2}-\lambda_{1})}X_{1} .
         \end{array}
\end{equation}
By (\ref{eq03}), we get
$$\nabla_{X_{1}}T=\nabla_{X_{1}}(b_{1}X_{1}+b_{2}X_{2})=X_{1}(b_{1})X_{1}+b_{1}\nabla_{X_{1}}X_{1}+X_{1}(b_{2})X_{2}+b_{2}\nabla_{X_{1}}X_{2}=\nu\lambda_{1}X_{1}\;\;\; \textrm{ and }$$ $$\nabla_{X_{2}}T=\nabla_{X_{2}}(b_{1}X_{1}+b_{2}X_{2})=X_{2}(b_{1})X_{1}+b_{1}\nabla_{X_{2}}X_{1}+X_{2}(b_{2})X_{2}+b_{2}\nabla_{X_{2}}X_{2}=\nu\lambda_{2}X_{2}.$$
Making the inner product of both equalities above with   $X_{1}$ and  $X_{2}$, we conclude
\begin{equation}\label{eq4.3}\begin{array}{ccccc}
                               X_{1}(b_{2}) & = & b_{1}\langle \nabla_{X_{1}}X_{2},X_{1}\rangle & = & \displaystyle\frac{c\nu b_{1}b_{2}}{\lambda_{2}-\lambda_{1}}, \\ \\
                               X_{2}(b_{1}) & = & b_{2}\langle \nabla_{X_{2}}X_{1},X_{2}\rangle & = & \displaystyle\frac{-c\nu b_{1}b_{2}}{\lambda_{2}-\lambda_{1}}.
                             \end{array}
\end{equation}

Therefore $X_{1}(b_{2})=-X_{2}(b_{1})$. Moreover,

\begin{equation}\label{eq4.31}\begin{array}{ccccc}
X_{1}(b_{1})+b_{2}\langle \nabla_{X_{1}}X_{2},X_{1}\rangle= \nu\lambda_{1}, \\ \\
 X_{2}(b_{2})+ b_{1}\langle \nabla_{X_{2}}X_{1},X_{2}\rangle =\nu\lambda_{2}.
\end{array}
\end{equation}
From equations (\ref{eq4.3}) and (\ref{eq4.31} ) we obtain
\begin{equation}\label{eq4.4}\begin{array}{ccc}
                                X_{1}(b_{1}) & = & \nu\lambda_{1} -\displaystyle\frac{c\nu (b_{2})^{2}}{\lambda_{2}-\lambda_{1}},\\ \\
                                X_{2}(b_{2}) & = & \nu\lambda_{2}+\displaystyle\frac{c\nu (b_{1})^{2}}{\lambda_{2}-\lambda_{1}}.
                             \end{array}
\end{equation}
So, by (\ref{eq04}),
\begin{equation}\label{eq4.5}\begin{array}{ccc}
                  X_{1}(\nu) & = & -\lambda_{1}b_{1}, \\
                  X_{2}(\nu) & = & -\lambda_{2}b_{2}.
                \end{array}
\end{equation}
Using now  Gauss equation we obtain
\begin{equation}\label{eq4.6} \langle R(X_{1},X_{2})X_{2},X_{1}\rangle=\lambda_{1}\lambda_{2}+c\nu^{2}.
\end{equation}

Observe that $$\begin{array}{rcl}
                \nabla_{[X_{1},X_{2}]}X_{2} & = & \nabla_{(\nabla_{X_{1}}X_{2}-\nabla_{X_{2}}X_{1})}X_{2} \\
                 & = & \nabla_{(\langle\nabla_{X_{1}}X_{2},X_{1}\rangle X_{1}-\langle\nabla_{X_{2}}X_{1}, X_{2}\rangle X_{2})}X_{2}\\
                 & = & \langle\nabla_{X_{1}}X_{2},X_{1}\rangle\nabla_{X_{1}}X_{2}-\langle\nabla_{X_{2}}X_{1}, X_{2}\rangle\nabla_{X_{2}}X_{2}.
              \end{array}$$
Then  $$\langle\nabla_{[X_{1},X_{2}]}X_{2},X_{1}\rangle=\langle\nabla_{X_{1}}X_{2},X_{1}\rangle^{2}+\langle\nabla_{X_{2}}X_{1}, X_{2}\rangle^{2}.$$
From equations  (\ref{eq4.6}) and  (\ref{eq4.2}), we obtain
\begin{equation}\label{eq4.7} \langle\nabla_{X_{1}}\nabla_{X_{2}}X_{2}-\nabla_{X_{2}}\nabla_{X_{1}}X_{2},X_{1}\rangle= \lambda_{1}\lambda_{2}+c\nu^{2}+\displaystyle\frac{\nu^{2}(b_{1}^{2}+b_{2}^{2})}{(\lambda_{2}-\lambda_{1})^{2}}.
\end{equation}

Observe, by equations  (\ref{eq4.2}) that  $\langle\nabla_{X_{1}}X_{2},\nabla_{X_{2}}X_{1}\rangle=0$ and so  $X_{2}\langle \nabla_{X_{1}}X_{2},X_{1}\rangle=\langle \nabla_{X_{2}}\nabla_{X_{1}}X_{2},X_{1}\rangle$. In order to compute  $\langle \nabla_{X_{2}}\nabla_{X_{1}}X_{2},X_{1}\rangle$ we derive the first equality of  (\ref{eq4.2}) with respect to  $X_{2}$ and use also the second equalities of  equations  (\ref{eq4.4}) and (\ref{eq4.5}) getting

\begin{equation}\label{eq4.8}-\langle\nabla_{X_{2}}\nabla_{X_{1}}X_{2},X_{1}\rangle=\displaystyle\frac{-c\lambda_{2}(\nu^{2}-b_{2}^{2})}{\lambda_{2}-\lambda_{1}}+
\displaystyle\frac{-\nu^{2}b_{1}^{2}}{(\lambda_{2}-\lambda_{1})^{2}}.
\end{equation}

From  equations  (\ref{eq4.3}) it follows that  $\langle\nabla_{X_{2}}X_{2},\nabla_{X_{1}}X_{1}\rangle=0$ and so  $X_{1}\langle \nabla_{X_{2}}X_{2},X_{1}\rangle=\langle \nabla_{X_{1}}\nabla_{X_{2}}X_{2},X_{1}\rangle$.
In order to compute  $\langle \nabla_{X_{1}}\nabla_{X_{2}}X_{2},X_{1}\rangle$ we derive the second equality  of  (\ref{eq4.3}) with respect to  $X_{1}$ and  use the first equation of   (\ref{eq4.4}) and  (\ref{eq4.5}) obtaining

\begin{equation}\label{eq4.9}\langle\nabla_{X_{1}}\nabla_{X_{2}}X_{2},X_{1}\rangle=\displaystyle\frac{c\lambda_{1}(\nu^{2}-b_{1}^{2})}{\lambda_{2}-\lambda_{1}}-
\displaystyle\frac{\nu^{2}b_{2}^{2}}{(\lambda_{2}-\lambda_{1})^{2}}.
\end{equation}

By summing equations  (\ref{eq4.9}) and  (\ref{eq4.8}), we get
\begin{equation}\label{eq4.10}
    \langle\nabla_{X_{1}}\nabla_{X_{2}}X_{2}-\nabla_{X_{2}}\nabla_{X_{1}}X_{2},X_{1}\rangle =-c\nu^{2}+\displaystyle\frac{c(\lambda_{2}b_{2}^{2}-\lambda_{1}b_{1}^{2})}{\lambda_{2}-\lambda_{1}}
    -\displaystyle\frac{\nu^{2}(b_{1}^{2}+b_{2}^{2})}{(\lambda_{2}-\lambda_{1})^{2}}.
\end{equation}
Finally from equations   (\ref{eq4.10}) and  (\ref{eq4.7}) we conclude that
$$ \lambda_{1}\lambda_{2}+2c\nu^{2}+\displaystyle\frac{c(\lambda_{1}b_{1}^{2}-\lambda_{2}b_{2}^{2})}{\lambda_{2}-\lambda_{1}}
+\displaystyle\frac{2\nu^{2}(b_{1}^{2}+b_{2}^{2})}{(\lambda_{2}-\lambda_{1})^{2}}=0.
$$
\end{proof}

Next result shows that a minimal surface of  $\mathbb{Q}^{2}_{c}\times\mathbb{R}$ with principal constant curvatures  is totally geodesic.

\begin{corollary}\label{min} The minimal surfaces of  $\mathbb{Q}^{2}_{c}\times\mathbb{R}$ with principal constant curvatures are totally geodesic.
\end{corollary}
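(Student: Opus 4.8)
The plan is to argue by contradiction, leaning on Proposition~\ref{prop21} together with the identity $\|T\|^{2}=1-\nu^{2}$ from (\ref{vt1}). First I would suppose that $f$ is minimal with constant principal curvatures but \emph{not} totally geodesic. On the surface $M^{2}$ there are two principal curvatures $\lambda_{1},\lambda_{2}$ at each point, and minimality means $\lambda_{1}+\lambda_{2}=0$; if $\lambda_{1}=\lambda_{2}$ then both vanish and $f$ is totally geodesic, against the assumption, so $\lambda_{1}\neq\lambda_{2}$, and writing $a:=\lambda_{2}=-\lambda_{1}$ we get that $a\neq0$ is constant and $f$ has two distinct constant principal curvatures. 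I would also assume $M^{2}$ connected, passing to a connected component if necessary.

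Next I would apply Proposition~\ref{prop21}. Substituting $\lambda_{1}=-a$, $\lambda_{2}=a$ into (\ref{eq2.21}) and using $b_{1}^{2}+b_{2}^{2}=\|T\|^{2}=1-\nu^{2}$, the equation (\ref{eq2.21}) should collapse, after clearing denominators, to an algebraic relation for $\nu^{2}$ with coefficients depending only on the constants $a$ and $c$, namely
$$\nu^{4}-(5ca^{2}+1)\,\nu^{2}+2a^{4}+ca^{2}=0 .$$
Since the left-hand side is a nonconstant quadratic polynomial in $\nu^{2}$, the function $\nu^{2}$ can take only finitely many values on $M^{2}$; being continuous on a connected manifold it is therefore constant, and hence $\nu$ is constant.

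Finally I would use that $\nu$ constant forces, via (\ref{eq04}), that $\langle AX,T\rangle=0$ for every $X\in TM^{2}$, so $AT=0$; but $A$ has the nonzero eigenvalues $\pm a$, hence is invertible, and therefore $T=0$. Then (\ref{vt1}) gives $\nu^{2}=1$, so by Proposition~\ref{triviais}(i) $f(M^{2})$ is an open subset of a slice $\mathbb{Q}^{2}_{c}\times\{t\}$, which is totally geodesic, i.e. $A\equiv0$; this contradicts $a\neq0$, so $f$ must have been totally geodesic. The only genuine computation here is the reduction of (\ref{eq2.21}) under $\lambda_{1}+\lambda_{2}=0$ to the displayed polynomial in $\nu^{2}$ — routine algebra, but the one place where care is needed — while every other step is immediate from the results already established.
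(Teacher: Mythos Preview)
Your proof is correct and follows essentially the same route as the paper: both reduce (\ref{eq2.21}) under $\lambda_{2}=-\lambda_{1}$ and $b_{1}^{2}+b_{2}^{2}=1-\nu^{2}$ to the biquadratic $\nu^{4}-(1+5c\lambda_{1}^{2})\nu^{2}+\lambda_{1}^{2}(2\lambda_{1}^{2}+c)=0$, conclude that $\nu$ is constant, and then use (\ref{eq04}) together with the invertibility of $A$ to force $T=0$, yielding the contradiction. Your write-up is slightly more explicit about the connectedness/continuity step and the final appeal to Proposition~\ref{triviais}, but the argument is the same.
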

\begin{proof}
Suppose that  there exist a minimal surface with two distinct constant principal curvatures   $\lambda_{2}=-\lambda_{1}$. From Proposition  \ref{prop21} we get
$$ -\lambda_{1}^{2}+2c\nu^{2}-\displaystyle\frac{c\lambda_{1}(b_{1}^{2}+b_{2}^{2})}{2\lambda_{1}}
+\displaystyle\frac{2\nu^{2}(b_{1}^{2}+b_{2}^{2})}{4\lambda_{1}^{2}}=0.
$$
We already know that  $\nu^{2}+b_{1}^{2}+b_{2}^{2}=1$ and thus
$$ -\lambda_{1}^{2}+2c\nu^{2}+\displaystyle\frac{c(\nu^{2}-1)}{2}
+\displaystyle\frac{2\nu^{2}(1-\nu^{2})}{4\lambda_{1}^{2}}=0,
$$ that is,
\begin{equation}\label{eq2.22}
\nu^{4}-\nu^{2}(1+5c\lambda_{1}^{2}) + \lambda_{1}^{2}(2\lambda_{1}^{2}+c)=0.
\end{equation}

So we obtain a biquadratic  equation  on the variable $\nu$, with constant real coefficients. If  equation  (\ref{eq2.22}) has a solution then the function  $\nu$ is  constant and  consequently $0~=~X_{i}(\nu)=-\langle AX_{i},T\rangle=-b_{i}\lambda_{i}$, with  $i\in\{1,2\}$. If  $\lambda_{1}=0$ then  $\lambda_{2}=-\lambda_{1}=0$, but this cannot occur since we are assuming  $\lambda_{1}\neq\lambda_{2}$. Then  $b_{1}=b_{2}=0$ and  $T=0$. Thus there does not exist a minimal surface  in  $\mathbb{Q}^{2}_{c}\times\mathbb{R}$ with two distinct constant principal curvatures.
\end{proof}

Proposition below shows that  a hypersurface in  $\QR$, $n\geq4$, with $\nu\neq0$, that has three constant principal curvatures of constant multiplicities   may not have two principal curvatures of  multiplicity one.

\begin{proposition} \label{dcpm1}
Let $\f$, $n\geq4$, be a hypersurface with  three constant distinct principal  curvatures  $\lambda$, $\mu$ and  $\gamma$ of constant multiplicities and suppose that  $\nu(p)~\neq~0$, for all  $p\in M^{n}$. Then there do not exist  two principal curvatures  of multiplicity one.
\end{proposition}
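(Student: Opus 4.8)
The plan is to argue by contradiction. Suppose $f$ has two principal curvatures of multiplicity one, say $\lambda$ and $\mu$; since $n\geq 4$ the third curvature $\gamma$ then has multiplicity $n-2\geq 2$. Because the principal curvatures have constant multiplicities, by Theorem~\ref{erdp} and its corollary I may fix a local orthonormal frame $\{X_1,\dots,X_n\}$ of principal directions with $AX_1=\lambda X_1$, $AX_2=\mu X_2$ and $AX_i=\gamma X_i$ for $3\leq i\leq n$. By Remark~\ref{obs31} the field $T$ has no component in the $\gamma$-eigenspace, so $T=b_1X_1+b_2X_2$ for smooth functions $b_1,b_2$. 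First I would reduce to the case $b_1b_2\neq 0$: if $b_1$ (or $b_2$) vanished on a nonempty open set, then on a smaller open set $T$ would be a nonzero eigenvector of $A$, hence a principal direction, and Theorem~\ref{ctdp} (applicable since $\nu\neq 0$) would force $f$ to be locally the horosphere hypersurface of that theorem, which has only two distinct principal curvatures, contradicting $g=3$ (the subcase $T\equiv 0$ on an open set is excluded since a slice is totally geodesic, hence $g=1$). So I work on a connected open set $U$ where $b_1,b_2$ are nowhere zero, and aim for a contradiction there.

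The next step is to extract a scalar identity on $U$. Fix any index $i$ with $3\leq i\leq n$; then $\langle X_i,T\rangle=0$. Writing the Codazzi equation~(\ref{eq02}) for the pairs $(X_1,X_i)$ and $(X_2,X_i)$ and taking the $X_i$-component gives
$$\langle\nabla_{X_i}X_1,X_i\rangle=\frac{c\nu b_1}{\lambda-\gamma},\qquad \langle\nabla_{X_i}X_2,X_i\rangle=\frac{c\nu b_2}{\mu-\gamma}.$$
On the other hand, from $\nabla_{X_i}T=\nu AX_i=\nu\gamma X_i$ (equation~(\ref{eq03})) and $T=b_1X_1+b_2X_2$, taking the inner product with $X_i$ yields $b_1\langle\nabla_{X_i}X_1,X_i\rangle+b_2\langle\nabla_{X_i}X_2,X_i\rangle=\nu\gamma$. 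Since $\nu\neq 0$, combining these gives the key identity
$$\frac{b_1^{2}}{\lambda-\gamma}+\frac{b_2^{2}}{\mu-\gamma}=c\gamma .$$

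Then I would compute the derivatives of $b_1,b_2$ along $X_1$ and $X_2$. The Codazzi equation~(\ref{eq02}) for $(X_1,X_2)$ gives $\langle\nabla_{X_1}X_2,X_1\rangle=\frac{c\nu b_2}{\mu-\lambda}$ and $\langle\nabla_{X_2}X_1,X_2\rangle=\frac{c\nu b_1}{\lambda-\mu}$, and then projecting $\nabla_{X_1}T=\nu\lambda X_1$ and $\nabla_{X_2}T=\nu\mu X_2$ onto $X_1$ and $X_2$ yields $X_1(b_1)=\nu\lambda-\frac{c\nu b_2^{2}}{\mu-\lambda}$, $X_1(b_2)=\frac{c\nu b_1b_2}{\mu-\lambda}$, together with the symmetric formulas for $X_2(b_1),X_2(b_2)$. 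Differentiating the key identity along $X_1$ and substituting these expressions, a short manipulation (using only that $\lambda,\mu,\gamma$ are distinct constants, $\nu\neq 0$ and $b_1b_2\neq 0$ on $U$) collapses to $b_2^{2}=c\lambda(\mu-\gamma)$; differentiating along $X_2$ gives likewise $b_1^{2}=c\mu(\lambda-\gamma)$. Hence $\|T\|^{2}=b_1^{2}+b_2^{2}$ is constant on $U$, so $\nu$ is constant there; since $T\neq 0$ on $U$, Remark~\ref{obsvctd} forces $T$ to be a principal direction on $U$, contradicting the choice of $U$. This contradiction establishes the proposition.

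The hard part is spotting the key identity: once it is available, everything else is bookkeeping with Codazzi~(\ref{eq02}) and~(\ref{eq03}), and the final collapse is a one-line algebraic cancellation. The only other point requiring a little care is the reduction to the open set $U$, i.e. ruling out that $b_1$, $b_2$ or $T$ vanishes on an open set, which is handled exactly as above using Theorem~\ref{ctdp} (recall that by Theorem~\ref{m1} at least one of $\lambda,\mu,\gamma$ has multiplicity one, so the content here is precisely the exclusion of the ``exactly two'' case).
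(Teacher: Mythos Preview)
Your argument is correct and takes a genuinely different, shorter route than the paper. The paper proceeds by computing all the connection coefficients from Codazzi, then evaluates the sectional curvatures $K(X_1,X_2)$, $K(X_1,X_j)$, $K(X_2,X_j)$ both from the Gauss equation and from the definition of $R$; equating these produces three scalar relations (equations (\ref{eq3.5})--(\ref{eq3.7}) in the paper), which after eliminating $b_1^2,b_2^2$ yield a polynomial in $\nu$ with constant coefficients and nonzero leading term, forcing $\nu$ constant. Your key identity $\dfrac{b_1^{2}}{\lambda-\gamma}+\dfrac{b_2^{2}}{\mu-\gamma}=c\gamma$, obtained simply by comparing the $X_i$-component of $\nabla_{X_i}T=\nu\gamma X_i$ with the Codazzi relations for $\langle\nabla_{X_i}X_1,X_i\rangle$ and $\langle\nabla_{X_i}X_2,X_i\rangle$, bypasses the Gauss-equation machinery entirely; differentiating it along $X_1$ and $X_2$ gives $b_1^{2}=c\mu(\lambda-\gamma)$ and $b_2^{2}=c\lambda(\mu-\gamma)$ directly, so that $\|T\|$ and hence $\nu$ are constant. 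Both proofs end in the same way (constant $\nu$ forces $T$ to be a principal direction, which is incompatible with $g=3$ via Theorem~\ref{ctdp}), but yours reaches that point with far less computation. One small stylistic remark: in your last line, ``contradicting the choice of $U$'' would be clearer if you spelled out that on $U$ the vector $T=b_1X_1+b_2X_2$ has nonzero components in two distinct eigenspaces, so $AT=\lambda b_1X_1+\mu b_2X_2$ cannot be a multiple of $T$; alternatively just invoke Theorem~\ref{ctdp} again as you did in the reduction step.
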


\begin{proof} Suppose there exist  two principal curvatures  $\lambda$ and  $\mu$ of multiplicity one. Let  $\{X_{1},X_{2},\ldots,X_{n}\}$ be a frame of principal orthonormal directions such that $AX_{1}~=~\lambda X_{1}$, $AX_{2}=\mu X_{2}$ and  $AX_{j}=\gamma X_{j}$, for $j\geq3$. From Remark  \ref{obs31}, we obtain  $T=b_{1}X_{1}+b_{2}X_{2}$, where  $b_{1},b_{2}\colon U\rightarrow\mathbb{R}$, $U\subset M^{n}$ are differentiable functions.

From  Codazzi equations, given in  (\ref{eq02}) we get
\begin{eqnarray}\label{eq3.1.0}
\nabla_{X_{1}}AX_{2}- \nabla_{X_{2}}AX_{1}-A[X_{1},X_{2}] & = & c\nu (b_{2}X_{1}-b_{1}X_{2}),\\
\label{eq3.11}
\nabla_{X_{1}}AX_{j}- \nabla_{X_{j}}AX_{1}-A[X_{1},X_{j}] & = & -c\nu b_{1}X_{j}, \mbox{ for each } j\in\{3,\ldots,n\},\\
\label{eq3.12}
\nabla_{X_{2}}AX_{j}- \nabla_{X_{j}}AX_{2}-A[X_{2},X_{j}] & = & -c\nu b_{2}X_{j}, \mbox{ for each } j\in\{3,\ldots,n\},\\
\label{eq3.13}
\nabla_{X_{\beta}}AX_{j}- \nabla_{X_{j}}AX_{\beta}-A[X_{\beta},X_{j}] & = &  0, \mbox{ for } j\in\{3,\ldots,n\} \textrm{ and } \beta\neq1,2,j.
\end{eqnarray}
From equation (\ref{eq3.1.0}), we obtain
$$\mu\nabla_{X_{1}}X_{2}- \lambda\nabla_{X_{2}}X_{1}-A(\nabla_{X_{1}}X_{2}-\nabla_{X_{2}}X_{1}) = c\nu (b_{2}X_{1}-b_{1}X_{2}),$$
that is,
$$\sum_{k=1}^{n}\langle \nabla_{X_{1}}X_{2},X_{k}\rangle(\mu I-A)X_{k}+\sum_{l=1}^{n}\langle \nabla_{X_{2}}X_{1},X_{l}\rangle(A-\lambda I)X_{l}-c\nu b_{2}X_{1}+c\nu b_{1}X_{2}=0.$$
Thus,
\begin{equation}\label{eq3.14}
\begin{array}{rcl}
\langle\nabla_{X_{1}}X_{2},X_{1}\rangle(\mu-\lambda)-c\nu b_{2} & = & 0,\\
\langle\nabla_{X_{2}}X_{1},X_{2}\rangle(\mu-\lambda)+c\nu b_{1} & = & 0
\end{array}
\end{equation}
\begin{equation}\label{eq3.15}
\textrm{ and \;\; } \langle\nabla_{X_{1}}X_{2},X_{j}\rangle(\mu-\gamma)+\langle\nabla_{X_{2}}X_{1},X_{j}\rangle(\gamma-\lambda)=0, \textrm{ for each } j\in\{3,\ldots,n\}.
\end{equation}
Proceeding analogously with equations   (\ref{eq3.11}), (\ref{eq3.12}) and  (\ref{eq3.13}) from equation  (\ref{eq3.11}) we obtain for each  $j\in\{3,\ldots,n\}$,
\begin{eqnarray}\label{eq3.16}
\langle\nabla_{X_{1}}X_{j},X_{1}\rangle(\gamma-\lambda) & = & 0, \\
\label{eq3.17}
\langle\nabla_{X_{j}}X_{1},X_{j}\rangle(\gamma-\lambda) +c\nu b_{1} & = & 0, \\
\label{eq3.18}
\langle\nabla_{X_{j}}X_{1},X_{\beta}\rangle(\gamma-\lambda) & = & 0, \;\; \beta\neq 1,2,j, \textrm{ \;and }\\
\label{eq3.19}
\langle\nabla_{X_{1}}X_{j},X_{2}\rangle(\gamma-\mu)+\langle\nabla_{X_{j}}X_{1},X_{2}\rangle(\mu-\lambda) & = & 0.
\end{eqnarray}

By equation  (\ref{eq3.12}) we conclude for each $j\in\{3,\ldots,n\}$ that
\begin{eqnarray}\label{eq3.20}
\langle\nabla_{X_{2}}X_{j},X_{2}\rangle(\gamma-\mu) & = & 0, \\
\label{eq3.21}
\langle\nabla_{X_{j}}X_{2},X_{j}\rangle(\gamma-\mu) +c\nu b_{2} & = & 0, \\
\label{eq3.22}
\langle\nabla_{X_{j}}X_{2},X_{\beta}\rangle(\gamma-\mu) & = & 0, \;\; \beta\neq 1,2,j, \textrm{ \;and }\\
\label{eq3.23}
\langle\nabla_{X_{2}}X_{j},X_{1}\rangle(\gamma-\lambda)+\langle\nabla_{X_{j}}X_{2},X_{1}\rangle(\lambda-\mu) & = & 0.
\end{eqnarray}

By using equation  (\ref{eq3.13}) we get for each  $j\in\{3,\ldots,n\}$ and $\beta\neq1,2,j$,
\begin{equation}\label{eq3.24}
\begin{array}{rcl}
\langle\nabla_{X_{\beta}}X_{j},X_{1}\rangle- \langle\nabla_{X_{j}}X_{\beta},X_{1}\rangle & = & 0,\\
\langle\nabla_{X_{\beta}}X_{j},X_{2}\rangle-\langle\nabla_{X_{j}}X_{\beta},X_{2}\rangle & = & 0.
\end{array}
\end{equation}
From  equations (\ref{eq3.18}), (\ref{eq3.22}) and  (\ref{eq3.24}), it follows that
\begin{equation}\label{eq3.25}
\langle\nabla_{X_{\beta}}X_{j},X_{1}\rangle=0 \;\;\;\textrm{ and }\;\;\;\langle\nabla_{X_{\beta}}X_{j},X_{2}\rangle=0.
\end{equation}
from  equations  (\ref{eq3.14}) to (\ref{eq3.23}) and  (\ref{eq3.25}) we conclude, for each $j\in\{3,\ldots,n\}$, that
\begin{eqnarray}\label{eq3.26}
\nabla_{X_{1}}X_{1} & = & -\displaystyle\frac{c\nu b_{2}}{\mu-\lambda}X_{2}, \\
\label{eq3.27}
\nabla_{X_{1}}X_{2} & = & \displaystyle\frac{c\nu b_{2}}{\mu-\lambda}X_{1}+\sum_{j=3}^{n}\langle\nabla_{X_{2}}X_{1},X_{j}\rangle\displaystyle\frac{(\lambda-\gamma)}{\mu-\gamma} X_{j}, \\
\label{eq3.28}
\nabla_{X_{1}}X_{j} & = & \langle\nabla_{X_{j}}X_{1},X_{2}\rangle\displaystyle\frac{(\lambda-\mu)}{\gamma-\mu} X_{2}+\sum_{\beta\neq1,2,j}\langle\nabla_{X_{1}}X_{j},X_{\beta}\rangle X_{\beta}, \\
\label{eq3.29}
\nabla_{X_{2}}X_{1} & = & -\displaystyle\frac{c\nu b_{1}}{\mu-\lambda}X_{2}+\sum_{j=3}^{n}\langle\nabla_{X_{2}}X_{1},X_{j}\rangle X_{j}, \\
\label{eq3.30}
\nabla_{X_{2}}X_{2} & = & \displaystyle\frac{c\nu b_{1}}{\mu-\lambda}X_{1}, \\
\label{eq3.31}
\nabla_{X_{2}}X_{j} & = & \langle\nabla_{X_{j}}X_{2},X_{1}\rangle\displaystyle\frac{(\mu-\lambda)}{\gamma-\lambda}X_{1}+\sum_{\beta\neq1,2,j}\langle\nabla_{X_{2}}X_{j},X_{\beta}\rangle X_{\beta}, \\
\label{eq3.32}
\nabla_{X_{j}}X_{1} & = & \langle \nabla_{X_{j}}X_{1},X_{2}\rangle X_{2}-\displaystyle\frac{c\nu b_{1}}{\gamma-\lambda}X_{j}, \\
\label{eq3.33}
\nabla_{X_{j}}X_{2} & = & \langle \nabla_{X_{j}}X_{2},X_{1}\rangle X_{1}-\displaystyle\frac{c\nu b_{2}}{\gamma-\mu}X_{j},\\
\label{eq3.34}
\nabla_{X_{j}}X_{j} & = & \displaystyle\frac{c\nu b_{1}}{\gamma-\lambda}X_{1}+\displaystyle\frac{c\nu b_{2}}{\gamma-\mu}X_{2}+\sum_{\beta\neq1,2,j}\langle\nabla_{X_{j}}X_{j},X_{\beta}\rangle X_{\beta}, \\
\label{eq3.35}
\nabla_{X_{\beta}}X_{j} & = & \langle\nabla_{X_{\beta}}X_{j},X_{\beta}\rangle X_{\beta}+\displaystyle\sum_{l\neq1,2,j,\beta}\langle\nabla_{X_{\beta}}X_{j},X_{l}\rangle X_{l}.
\end{eqnarray}

From  equalities  (\ref{eq03}) and  (\ref{eq04}), for all  $X\in TM^{n}$, we get
\begin{equation}\label{eq3.40}
\begin{array}{rcl}
  X_{1}(\nu) & = & -\lambda b_{1} \\
  X_{2}(\nu) & = & -\mu b_{2} \\
  X_{j}(\nu) & = & 0,\;\;j\in\{3,\ldots,n\}.
\end{array}
\end{equation}
Moreover from
$$\nabla _{X_{1}}(b_{1}X_{1}+b_{2}X_{2})=X_{1}(b_{1})X_{1}+b_{1}\nabla _{X_{1}}X_{1}+X_{1}(b_{2})X_{2}+b_{2}\nabla _{X_{1}}X_{2}=\nu \lambda X_{1},$$
we obtain
\begin{eqnarray}
\label{eq3.41}
X_{1}(b_{1}) & = & \nu\lambda - \displaystyle\frac{c\nu b_{2}^{2}}{\mu-\lambda},\\
\label{eq3.42}
X_{1}(b_{2}) & = & \displaystyle\frac{c\nu b_{1}b_{2}}{\mu-\lambda},\\
\label{eq3.43}
b_{2}\langle\nabla _{X_{1}}X_{2},X_{j}\rangle & = & 0, \;\;\textrm{ for each } j\in\{3,\ldots,n\}.
\end{eqnarray}

Analogously, deriving  $T$ with respect to  $X_{2}$, we obtain
$$\nabla _{X_{2}}(b_{1}X_{1}+b_{2}X_{2})=X_{2}(b_{1})X_{1}+b_{1}\nabla _{X_{2}}X_{1}+X_{2}(b_{2})X_{2}+b_{2}\nabla _{X_{2}}X_{2}=\nu \mu X_{2},$$
and therefore,
\begin{eqnarray}
\label{eq3.44}
X_{2}(b_{1}) & = & -\displaystyle\frac{c\nu b_{1}b_{2}}{\mu-\lambda},\\
\label{eq3.45}
X_{2}(b_{2}) & = & \nu\mu + \displaystyle\frac{c\nu b_{1}^{2}}{\mu-\lambda},\\
\label{eq3.46}
b_{1}\langle\nabla _{X_{2}}X_{1},X_{j}\rangle & = & 0, \;\;\textrm{ for each } j\in\{3,\ldots,n\}.
\end{eqnarray}
On one hand using Gauss equation (\ref{eq01}), we obtain
\begin{eqnarray}
\label{eq3.47}
K(X_{1},X_{2}) & = & \lambda\mu+c\nu^{2},\\
\label{eq3.48}
K(X_{1},X_{j}) & = & \lambda\gamma+c(1-b_{1}^{2}),\;\;j\in\{3,\ldots,n\},\\
\label{eq3.49}
K(X_{2},X_{j}) & = & \mu\gamma+c(1-b_{2}^{2}),\;\;j\in\{3,\ldots,n\}.
\end{eqnarray}
On the other hand, we know that  $$K(X_{1},X_{2})=\langle R(X_{1},X_{2})X_{2},X_{1}\rangle=\langle\nabla_{X_{1}}\nabla _{X_{2}}X_{2}-\nabla_{X_{2}}\nabla _{X_{1}}X_{2}-\nabla _{[X_{1},X_{2}]}X_{2},X_{1}\rangle.$$
From equations  (\ref{eq3.26}) and (\ref{eq3.30}) we get
$$\langle\nabla_{X_{1}}\nabla _{X_{2}}X_{2},X_{1}\rangle=X_{1}\left(\displaystyle\frac{c\nu b_{1}}{\mu-\lambda}\right),$$
and from  (\ref{eq3.40}) and  (\ref{eq3.41}),
\begin{equation}\label{eq3.50}
\langle\nabla_{X_{1}}\nabla _{X_{2}}X_{2},X_{1}\rangle=\displaystyle\frac{c\lambda(\nu^{2}-b_{1}^{2})}{\mu-\lambda}-\displaystyle\frac{\nu^{2}b_{2}^{2}}{(\mu-\lambda)^{2}}.
\end{equation}
By using now equations  (\ref{eq3.27}), (\ref{eq3.29}), (\ref{eq3.40}) and  (\ref{eq3.45}), we arrive to
$$
\begin{array}{rcl}
-\langle \nabla_{X_{2}}\nabla _{X_{1}}X_{2},X_{1} \rangle & = & \langle \nabla _{X_{1}}X_{2},\nabla_{X_{2}}X_{1} \rangle-X_{2}\langle\nabla _{X_{1}}X_{2},X_{1} \rangle  \\
  & = & \displaystyle\sum^{n}_{j=3}\langle\nabla _{X_{2}}X_{1},X_{j} \rangle^{2}\displaystyle\frac{(\lambda-\gamma)}{\mu-\gamma}- X_{2}\left(\displaystyle\frac{c\nu b_{2}}{\mu-\lambda}\right)
\end{array}
$$
and so
\begin{equation}\label{eq3.51}
-\langle \nabla_{X_{2}}\nabla _{X_{1}}X_{2},X_{1} \rangle=\displaystyle\sum^{n}_{j=3}\langle\nabla _{X_{2}}X_{1},X_{j} \rangle^{2}\displaystyle\frac{(\lambda-\gamma)}{\mu-\gamma}- \displaystyle\frac{c\mu(\nu^{2}-b_{2}^{2})}{\mu-\lambda}-\displaystyle\frac{\nu^{2}b_{1}^{2}}{(\mu-\lambda)^{2}}.
\end{equation}

Observe that

$$
 \begin{array}{rcl}
 \nabla_{[X_{1},X_{2}]}X_{2} & = & \displaystyle\frac{c\nu b_{2}}{\mu-\lambda}\nabla_{X_{1}}X_{2}+\sum_{j=3}^{n}\langle\nabla_{X_{2}}X_{1},X_{j}\rangle
 \displaystyle\frac{(\lambda-\gamma)}{\mu-\gamma}\nabla_{X_{j}}X_{2}\\
   &  &  +\displaystyle\frac{c\nu b_{1}}{\mu-\lambda}\nabla_{X_{2}}X_{2}-\sum_{j=3}^{n}\langle\nabla_{X_{2}}X_{1},X_{j}\rangle\nabla_{X_{j}}X_{2}.
\end{array}
$$
Thus,
$$-\langle\nabla_{[X_{1},X_{2}]}X_{2},X_{1}\rangle=-\displaystyle\frac{\nu^{2}(b_{1}^{2}+b_{2}^{2})}{(\mu-\lambda)^{2}}+\sum_{j=3}^{n}\langle\nabla_{X_{2}}X_{1},X_{j}\rangle
\langle\nabla_{X_{j}}X_{2},X_{1}\rangle\left(1-\displaystyle\frac{(\lambda-\gamma)}{\mu-\gamma}\right).$$
From (\ref{eq3.31}) and  (\ref{eq3.33}), we get
$$\langle\nabla_{X_{j}}X_{2},X_{1}\rangle=\langle\nabla_{X_{2}}X_{j},X_{1}\rangle\displaystyle\frac{(\gamma-\lambda)}{\mu-\lambda}$$ and so
\begin{equation}\label{eq3.52}
-\langle\nabla_{[X_{1},X_{2}]}X_{2},X_{1}\rangle=-\displaystyle\frac{\nu^{2}(b_{1}^{2}+b_{2}^{2})}{(\mu-\lambda)^{2}}-\sum_{j=3}^{n}
\langle\nabla_{X_{2}}X_{j},X_{1}\rangle^{2}\displaystyle\frac{(\gamma-\lambda)}{\mu-\lambda}\left(1-\displaystyle\frac{(\lambda-\gamma)}{\mu-\gamma}\right).
\end{equation}
By summing  (\ref{eq3.50}) with  (\ref{eq3.51}) and  (\ref{eq3.52}) and comparing with  (\ref{eq3.47}), we get
\begin{equation}\label{eq3.5}
\displaystyle\frac{2\nu^{2}(1-\nu^{2})}{(\mu-\lambda)^{2}}+2c\nu^{2}+\lambda\mu+\displaystyle\frac{c(b_{1}^{2}\lambda-b_{2}^{2}\mu)}{\mu-\lambda}-
\displaystyle\frac{2(\lambda-\gamma)}{\mu-\gamma}\sum_{j=3}^{n}\langle\nabla_{X_{2}}X_{1},X_{j}\rangle^{2}=0.
\end{equation}
Proceeding analogously  for  $K(X_{1},X_{j})$ and  $K(X_{2},X_{j})$ we conclude for each  $j\geq3$, that

\begin{eqnarray}\nonumber
& & \displaystyle\frac{c\lambda(\nu^{2}-b_{1}^{2})}{\gamma-\lambda}-\displaystyle\frac{\nu^{2}}{\gamma-\lambda}\left(\displaystyle\frac{b_{1}^{2}}{\gamma-\lambda}+
\displaystyle\frac{b_{2}^{2}}{\mu-\lambda}\right)+\displaystyle\frac{\nu^{2}b_{2}^{2}}{(\mu-\lambda)(\gamma-\mu)}-\lambda\gamma-c(1-b_{1}^{2})   \\ \label{eq3.6}
&  & -\displaystyle\frac{2(\mu-\lambda)}{\gamma-\mu}\langle\nabla_{X_{j}}X_{1},X_{2}\rangle^{2}=0,
\end{eqnarray}

\begin{eqnarray}\nonumber
&  & \displaystyle\frac{c\mu(\nu^{2}-b_{2}^{2})}{\gamma-\mu}+\displaystyle\frac{\nu^{2}}{\gamma-\mu}\left(\displaystyle\frac{b_{1}^{2}}{\mu-\lambda}-
\displaystyle\frac{b_{2}^{2}}{\gamma-\mu}\right)-\displaystyle\frac{\nu^{2}b_{1}^{2}}{(\gamma-\lambda)(\mu-\lambda)}-\mu\gamma-c(1-b_{2}^{2})
   \\ \label{eq3.7}
&  & + \displaystyle\frac{2(\mu-\lambda)}{\gamma-\lambda}\langle\nabla_{X_{j}}X_{1},X_{2}\rangle^{2}=0.
\end{eqnarray}

From  (\ref{eq3.46}) we get  $b_{1}\langle\nabla _{X_{2}}X_{1},X_{j}\rangle =0$, for each  $j\in\{3,\ldots,n\}$.
Suppose that $b_{1}(p)=0$, for all  $p\in U\subset M^{n}$. Then
$T$ is a principal direction. As we are supposing $\nu\neq0$ from  Theorem \ref{ctdp} we get  $c=-1$ and  $g=2$, which is against the hypothesis  $g=3$.
So it must exist a  $p_{0}$ such that  $b_{1}(p_{0})\neq 0$. Since the function  $b_{1}$ is continuous  there  exist a neighborhood $V\subset U\subset M^{n}$ of $p_{0}$ such that  $b_{1}(p)\neq 0$ for all $p\in V$. Thus  $\langle\nabla _{X_{2}}X_{1},X_{j}\rangle =0 $  in  $V$, for each  $j\in\{3,\ldots,n\}$.

From equations  (\ref{eq3.15}) and (\ref{eq3.19}) we conclude that
$$\langle\nabla _{X_{2}}X_{1},X_{j}\rangle =0 \Leftrightarrow \langle\nabla _{X_{1}}X_{2},X_{j}\rangle =0 \Leftrightarrow\langle\nabla _{X_{j}}X_{1},X_{2}\rangle =0,\;\;\textrm{ for each  } j\in\{3,\ldots,n\}.$$
Therefore, from equations  (\ref{eq3.5}), (\ref{eq3.6}) and  (\ref{eq3.7}) we obtain in  $V$, respectively,
\begin{equation}\label{eq3.8}
\displaystyle\frac{2\nu^{2}(1-\nu^{2})}{(\mu-\lambda)^{2}}+2c\nu^{2}+\lambda\mu+\displaystyle\frac{c(b_{1}^{2}\lambda-b_{2}^{2}\mu)}{\mu-\lambda}=0,
\end{equation}
\begin{equation}\label{eq3.9}
\displaystyle\frac{c\lambda(\nu^{2}-b_{1}^{2})}{\gamma-\lambda}-\displaystyle\frac{\nu^{2}b_{1}^{2}}{(\gamma-\lambda)^{2}}+
\displaystyle\frac{\nu^{2}b_{2}^{2}}{(\gamma-\mu)(\gamma-\lambda)}-\lambda\gamma-c(1-b_{1}^{2})=0,
\end{equation}
\begin{equation}\label{eq3.10}
\displaystyle\frac{c\mu(\nu^{2}-b_{2}^{2})}{\gamma-\mu}-\displaystyle\frac{\nu^{2}b_{2}^{2}}{(\gamma-\mu)^{2}}+
\displaystyle\frac{\nu^{2}b_{1}^{2}}{(\gamma-\mu)(\gamma-\lambda)}-\mu\gamma-c(1-b_{2}^{2})=0.
\end{equation}
Replacing  $b_{2}^{2}$ by $1-\nu^{2}-b^{2}_{1}$ in equation (\ref{eq3.8}), we get
$$-cb_{1}^{2}\displaystyle\frac{(\lambda+\mu)}{\mu-\lambda}=
 \lambda\mu+2c\nu^{2}-\displaystyle\frac{c\mu(1-\nu^{2})}{\mu-\lambda}
+\displaystyle\frac{2\nu^{2}(1-\nu^{2})}{(\mu-\lambda)^{2}}.
$$

With analogous arguments used in  Proposition  \ref{min} we conclude that  $\lambda+\mu\neq0$. Then
\begin{equation}\label{eq3.4.0}
b_{1}^{2}=\nu^{4}\displaystyle\frac{2c}{(\mu+\lambda)(\mu-\lambda)}+\nu^{2}\displaystyle\frac{(\mu-\lambda)(-3\mu+2\lambda)-2c}{(\mu+\lambda)(\mu-\lambda)}+
\displaystyle\frac{\mu}{\mu+\lambda}\left(1-c\lambda(\mu-\lambda)\right).
\end{equation}
By summing  (\ref{eq3.9}) with  (\ref{eq3.10}) we get

\begin{eqnarray}\nonumber
&  & c\nu^{2}\left(\displaystyle\frac{\lambda}{\gamma-\lambda}+\displaystyle\frac{\mu}{\gamma-\mu}\right)-\displaystyle\frac{c\lambda b_{1}^{2}}{\gamma-\lambda}-\displaystyle\frac{c\mu b_{2}^{2}}{\gamma-\mu}-
\displaystyle\frac{\nu^{2}b_{1}^{2}}{(\gamma-\lambda)^{2}}-\displaystyle\frac{\nu^{2}b_{2}^{2}}{(\gamma-\mu)^{2}}\\ \label{eq3.4.1}
&  & +\displaystyle\frac{\nu^{2}(1-\nu^{2})}{(\gamma-\mu)(\gamma-\lambda)} -\gamma(\mu+\lambda)-c(1+\nu^{2})=0.
\end{eqnarray}

Now replacing  $b_{2}^{2}$ by  $1-\nu^{2}-b_{1}^{2}$  in  (\ref{eq3.4.1}) we get

\begin{eqnarray} \nonumber
& &\nu^{4}\displaystyle\frac{(\mu-\lambda)}{(\gamma-\mu)^{2}(\gamma-\lambda)}+\nu^{2}\left(\displaystyle\frac{-(\mu-\lambda)+c\lambda(\gamma-\mu)^{2}+2c\mu(\gamma-\mu)(\gamma-\lambda)}{(\gamma-\mu)^{2}(\gamma-\lambda)} -c\right)-\displaystyle\frac{c\gamma}{\gamma-\mu} \\  \label{eq3.4.2}
&  & -\gamma(\mu+\lambda)+cb_{1}^{2}\left(\displaystyle\frac{\mu}{\gamma-\mu}-\displaystyle\frac{\lambda}{\gamma-\lambda}\right)+
\nu^{2}b_{1}^{2}\left(\displaystyle\frac{1}{(\gamma-\mu)^{2}}-\displaystyle\frac{1}{(\gamma-\lambda)^{2}}\right)=0.
\end{eqnarray}

After replacing  (\ref{eq3.4.0}) in (\ref{eq3.4.2}) we observe that  the greatest  power of  $\nu$ in the last equation is  $\nu^{6}$ whose coefficient is  $\left(\displaystyle\frac{1}{(\gamma-\mu)^{2}}-\displaystyle\frac{1}{(\gamma-\lambda)^{2}}\right)\displaystyle\frac{2c}{(\mu+\lambda)(\mu-\lambda)}$. If that  term is different from zero the equation has grade six and has constant real coefficients in the variable $\nu$. If not, i.e. if  $(\gamma-\lambda)^{2}=(\gamma-\mu)^{2}$ we have  $\gamma-\lambda=\gamma-\mu$ or $\gamma-\lambda=-(\gamma-\mu)$. If $\gamma-\lambda=\gamma-\mu$ then  $\lambda=\mu$ which is against the hypothesis  $\lambda\neq \mu$. If $\gamma-\lambda=-(\gamma-\mu)$ then $\mu-\lambda=2(\gamma-\lambda)$. Finally, the greatest power of  $\nu$ would be of fourth order and the coefficient of  $\nu^{4}$ is $\displaystyle\frac{1}{(\gamma-\mu)^{2}}$ which does not vanish. In that case we would have  a biquadratic equation with real constant coefficients on the variable  $\nu$.

Therefore, assuming the existence of two constant principal curvatures  with multiplicity one and  $\nu(p)\neq0$ for all  $p\in M^{n}$ the function  $\nu$ must  satisfy equation (\ref{eq3.4.2}). In that case the function  $\nu$ would be constant and consequently  $T$ would be a principal direction. Now using Theorem \ref{ctdp} we would obtain  $c=-1$ and  $g=2$, against the hypothesis  $g=3$.

Finally the conclusion is that  there does not  exist two  principal constant curvatures with  multiplicity one if we suppose also  $\nu(p)\neq0$, for all $p\in M^{n}$.
\end{proof}

Taking into account the previous results we are ready to prove  the local  classification of  hypersurfaces in $\QR$, $n\neq3$, having constant principal curvatures and $g\in\{1,2,3\}$.

\begin{theorem} \label{teoclass}
Let  $\f$ be a hypersurface  with constant principal curvatures.
\begin{description}
  \item[(i)] If  $g=1$ and  $n\geq2$  then  $f(M^{n})$ is an open subset of  $\mathbb{Q}^{n}_{c}\times\{t_{0}\}$, for any $t_{0}\in\mathbb{R}$ or an open subset of a Riemannian product $M^{n-1}\times\mathbb{R}$. In the last case if  $c=1$, $M^{n-1}$ is a totally geodesic sphere in  $\mathbb{S}^{n}$ and if  $c=-1$, $M^{n-1}$ is a totally geodesic hyperplane in  $\mathbb{H}^{n}$.
  \item[(ii)] If  $g=2$ and  $n\geq2$ then  $c=-1$ and  $f$ is locally given by  $f(x,s)~=~ g_{s}(x)+Bs\dt$, for some  $B\in \mathbb{R}$, $B>0$, with  $M^{n}=M^{n-1}\times I$,  where  $g_{s}$ is a family of horospheres in  $\mathbb{H}^{n}$, or  $f(M^{n})$ is an open subset of a Riemannian product $M^{n-1}\times\mathbb{R}$. In the last case, if  $c=1$ then $M^{n-1}$ is a non totally geodesic sphere in $\mathbb{S}^{n}$ and if  $c=-1$, $M^{n-1}$ is an equidistant hipersurface, a horosphere or a  hypersphere in $\mathbb{H}^{n}$.

  \item[(iii)] Suppose that $g=3$, $n\geq4$ and the multiplicities of the principal curvatures are constant. Then if  $c=1$, $f(M^{n})$  is an open subset of a Riemannian product $\mathbb{S}^{p}(r)\times\mathbb{S}^{q}(s)\times\mathbb{R}$, with  $n=p+q+1$ and  $r^{2}+s^{2}=1$ or an open subset of the product  $M^{n-1}\times\mathbb{R}$, where $M^{n-1}$ is a Cartan's hypersurface with  $n\in\{4, 7, 13, 25\}$.  If  $c=-1$, $f(M^{n})$ is an open subset of the Riemannian product $\mathbb{S}^{k}\times\mathbb{H}^{n-k-1}\times\mathbb{R}$.

\end{description}
\end{theorem}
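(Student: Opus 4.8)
The plan is to split off the umbilical case $g=1$ and then, for $g\in\{2,3\}$, to distinguish according to whether the angle function $\nu$ vanishes identically, the two trivial classes of Proposition \ref{triviais} serving as the backbone. If $g=1$ then $A=\lambda\,\mathrm{Id}$ with $\lambda$ constant, so $(\nabla_X A)Y\equiv 0$ and the Codazzi equation \eqref{eq02} forces $c\nu\,(X\wedge Y)T\equiv 0$; testing this with $X=T$ and $Y\perp T$ (possible since $n\ge 2$) gives $\nu\|T\|\equiv 0$ pointwise, so by \eqref{vt1} the sets $\{\nu\ne 0\}=\{T=0\}$ and $\{\nu=0\}$ are disjoint closed sets covering $M^n$, whence on a connected $M^n$ one of them is everything. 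In the first case $T\equiv 0$, $f(M^n)$ is a slice by Proposition \ref{triviais}(i), hence totally geodesic and $\lambda=0$; in the second case $f(M^n)\subset M^{n-1}\times\mathbb{R}$ by Proposition \ref{triviais}(ii), and since $0$ is already a principal curvature of the product while $g=1$, again $\lambda=0$ and $M^{n-1}$ is totally geodesic in $\mathbb{Q}^n_c$. Thus $f$ is totally geodesic, and Theorem \ref{totgd} together with the description of totally geodesic hypersurfaces of $\mathbb{Q}^n_c$ gives (i).

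For $g\in\{2,3\}$ and $\nu\equiv 0$, Proposition \ref{triviais}(ii) gives $f(M^n)$ an open part of $M^{n-1}\times\mathbb{R}$ with $M^{n-1}\subset\mathbb{Q}^n_c$ of constant principal curvatures, hence isoparametric; the principal curvatures of $f$ are $0$ together with those of $M^{n-1}$. Discussing which multiplicity patterns are compatible with exactly $g$ distinct curvatures and feeding them into the classification of isoparametric hypersurfaces of $\mathbb{S}^n$ with $g\le 3$ (umbilical; Clifford tori $\mathbb{S}^p(r)\times\mathbb{S}^q(s)$; Cartan hypersurfaces) and of $\mathbb{H}^n$ (umbilical; tubes $\mathbb{S}^k\times\mathbb{H}^{n-k-1}$) yields the product cases of (ii) and (iii). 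Here one uses that $\mathbb{S}^p(r)\times\mathbb{S}^q(s)$ has no vanishing principal curvature, so it never produces $g=2$ in the product, and that a Cartan hypersurface produces $g=3$ in the product only when it has a zero principal curvature, i.e. when it is the minimal one with curvatures $\sqrt 3,0,-\sqrt 3$, which forces $n\in\{4,7,13,25\}$.

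For $g\in\{2,3\}$ and $\nu$ not identically zero, I restrict to the open set where $\nu\ne 0$ and aim to show $T$ is a principal direction there; Theorem \ref{ctdp} then forces $c=-1$ and the explicit family $f(x,s)=h_s(x)+Bs\,\dt$ of horospheres, all of whose nonzero principal curvatures equal $\pm B/\sqrt{1+B^2}$, so $g=2$. For $n\ge 3$ this is a multiplicity argument: when $g=2$, if all multiplicities were at least two then $n\ge 4$ and Theorem \ref{m1} would be contradicted, so exactly one principal curvature has multiplicity one (automatic if $n=3$) and Proposition \ref{dp} makes $T$ a principal direction; when $g=3$ (so $n\ge 4$), Theorem \ref{m1} gives at least one multiplicity equal to one, Proposition \ref{dcpm1} forbids two, hence exactly one, and Proposition \ref{dp} applies. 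Applying Theorem \ref{ctdp} then completes $g=2$ (the horosphere family of (ii)) and produces a contradiction for $g=3$, so that sub-case is empty and $g=3$ falls entirely under the product case above. For $n=2$ (hence $g=2$) I argue from Proposition \ref{prop21} instead: excluding the minimal case by Corollary \ref{min}, relation \eqref{eq2.21} together with $\nu^2+b_1^2+b_2^2=1$ writes $b_1^2$ as a polynomial in $\nu^2$; differentiating this along $X_1$ and comparing with \eqref{eq4.4}--\eqref{eq4.5} forces $\nu$ to satisfy a nontrivial polynomial equation, hence $\nu$ is locally constant, $T$ is a principal direction, and Theorem \ref{ctdp} applies again.

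The main difficulty, beyond the case bookkeeping under $\nu\equiv 0$ of deciding exactly which isoparametric hypersurfaces of $\mathbb{Q}^n_c$ give precisely $g$ distinct principal curvatures after crossing with $\mathbb{R}$, is the $n=2$ sub-case: one must verify that the polynomial identity produced by the differentiation above is genuinely nontrivial---that its leading coefficient in $\nu^2$ does not vanish---since otherwise $\nu$ need not be constant. A last technical point is the passage between the regions $\{\nu=0\}$ and $\{\nu\ne 0\}$; as the classification is local, one works at generic points where $\nu$ is either nowhere zero or identically zero in a neighbourhood, the remaining transition locus being disposed of by a routine connectedness argument.
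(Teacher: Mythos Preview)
Your proposal is correct and follows essentially the same route as the paper: the umbilical case via Codazzi, the product case via Proposition~\ref{triviais}(ii) plus the known classification of isoparametric hypersurfaces in $\mathbb{Q}^n_c$, and the non-product case via the chain Theorem~\ref{m1} $\Rightarrow$ Proposition~\ref{dcpm1}/Proposition~\ref{dp} $\Rightarrow$ Theorem~\ref{ctdp}, with the $n=2$ sub-case handled through the algebraic identity of Proposition~\ref{prop21}. The only visible differences are organizational: you branch on $\nu\equiv 0$ versus $\nu\not\equiv 0$ first and then sort out multiplicities, whereas the paper (for $g=3$) branches on the multiplicity pattern first and then argues $\nu\equiv 0$ in each sub-case; and in the $n=2$ computation you differentiate the $b_1^2$-relation along $X_1$, while the paper differentiates the symmetric $b_2^2$-relation along $X_2$---these are the same argument up to swapping indices, and your caveat about checking the leading coefficient is exactly the point the paper verifies explicitly (it finds a genuine degree-five polynomial in $\nu$). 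Your remark on the transition locus between $\{\nu=0\}$ and $\{\nu\ne 0\}$ is, if anything, more careful than the paper, which passes from ``not everywhere nonzero'' to ``identically zero'' without comment.
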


\begin{proof}

(i) Suppose that  $g=1$.  If  $\nu\equiv 0$ from Remark \ref{obsvctd} we infer that the field  $T$ is a  principal direction with corresponding principal curvature $\lambda=0$. Then $f$ an umbilical immersion  implies that all the principal curvatures are zero, i.e,  $f$ is  totally  geodesic. In this case, $f$ is an open subset of a Riemannian product $M^{n-1}\times\mathbb{R}$, where $M^{n-1}$ is a totally geodesic hypersurface in  $\mathbb{Q}_{c}^{n}$.

Let us now suppose that  $\nu(p) \neq 0$, for all  $p\in M$.  Take  $\{X_{1}, X_{2}, ..., X_{n}\}$ a local orthonormal frame field of the immersion  $f$. We can write  $T=\displaystyle\sum_{i=1}^{n}b_{i}X_{i}$.
From  hypothesis we have  $AX_{i}=\lambda X_{i}$ for all  $i\in \{1,...,n\}$ where  $\lambda$ is constant in  $\mathbb{R}$.

From Codazzi's equations we get,
$$\nabla_{X_{i}}AX_{j}- \nabla_{X_{j}}AX_{i}-A[X_{i},X_{j}]=c\nu (b_{j}X_{i}-b_{i}X_{j}),$$
 which implies $0=\lambda[X_{i},X_{j}]-\lambda[X_{i},X_{j}]=c\nu (b_{j}X_{i}-b_{i}X_{j})$. Since  $\nu\neq 0$ and the fields  $X_{i}$,  $X_{j}$ are  linearly  independent for $i\neq j$ it follows that  $b_{i}=b_{j}=0$ for all $i,j\in \{1,...,n\}$. Then  $T=0$ and  $f(M^{n})$ is locally an open subset  of $\mathbb{Q}^{n}_{c}\times \{t\}$, which proves   item (i).

(ii) Suppose now  $g=2$. By \cite[Proposition 2.2]{Ry1} the multiplicities of the principal curvatures  are constant. Let us consider two subcases $n=2$ and $n\geq3$.

First case: $n=2$ \\
Replace $b_{2}^{2}=1-\nu^{2}-b^{2}_{1}$  in equation (\ref{eq2.21}) to get,
$$-cb_{1}^{2}\displaystyle\frac{(\lambda_{1}+\lambda_{2})}{\lambda_{2}-\lambda_{1}}=
 \lambda_{1}\lambda_{2}+2c\nu^{2}-\displaystyle\frac{c\lambda_{2}(1-\nu^{2})}{\lambda_{2}-\lambda_{1}}
+\displaystyle\frac{2\nu^{2}(1-\nu^{2})}{(\lambda_{2}-\lambda_{1})^{2}}.
$$
From Corollary \ref{min} we obtain  $\lambda_{1}+\lambda_{2}\neq0$ and so,
\begin{equation}\label{eq23}
b_{1}^{2}=-c\displaystyle\frac{(\lambda_{2}-\lambda_{1})}{\lambda_{1}+\lambda_{2}}\left\{\lambda_{1}\lambda_{2}+2c\nu^{2}-\displaystyle\frac{c\lambda_{2}(1-\nu^{2})}{\lambda_{2}-\lambda_{1}}
+\displaystyle\frac{2\nu^{2}(1-\nu^{2})}{(\lambda_{2}-\lambda_{1})^{2}}\right\}.
\end{equation}
Replace also  $b_{1}^{2}=1-\nu^{2}-b^{2}_{2}$ on equation  (\ref{eq2.21}) to get,
\begin{equation}\label{eq24}
-2\nu^{4}+\nu^{2}D+b_{2}^{2}E=F,
\end{equation}
where $D=2c(\lambda_{2}-\lambda_{1})^{2}-c\lambda_{1}(\lambda_{2}-\lambda_{1})+2$, $E=-c(\lambda_{2}^{2}-\lambda_{1}^{2})$ and
$F=-\lambda_{1}\lambda_{2}(\lambda_{2}-\lambda_{1})^{2}-c\lambda_{1}(\lambda_{2}-\lambda_{1})$.
Deriving equation  (\ref{eq24}) with respect to  $X_{2}$ we get
\begin{equation}\label{eq25}
-8\nu^{3}X_{2}(\nu)+2\nu X_{2}(\nu)D+2b_{2}X_{2}(b_{2})E=0.
\end{equation}
If $\lambda_{2}=0$ from equation  (\ref{eq4.5}), $X_{2}(\nu)=0$ and from equation  (\ref{eq25}), $b_{2}X_{2}(b_{2})=0$. Then $b_{2}=0$ or the expression for  $X_{2}(b_{2})$ given by  (\ref{eq4.4}), $\nu=0$ or $b_{1}=0$.
If  $\nu\equiv0$, the field  $T$ is a principal direction. If  $\nu\neq0$ then  $b_{2}=0$ or $b_{1}=0$. Suppose that there exist $x_{0}$ such that  $b_{1}(x_{0})\neq0$. Then by continuity there exist a neighborhood  $V$ of $x_{0}$ such that  $b_{1}(x)\ne0$ for all $x\in V$. Then $b_{2}(x)=0$ for all $x\in V$ and  $T$ is a principal direction. Finally if  $\lambda_{2}=0$ it follows that  $T$ is a  principal direction.

Suppose that  $\lambda_{2}\neq0$. From  (\ref{eq4.5}), $b_{2}=-\displaystyle\frac{X_{2}(\nu)}{\lambda_{2}}$ and from equation  (\ref{eq25}) we get
$$X_{2}(\nu)\left\{-8\nu^{3}+2\nu D-\displaystyle\frac{2E}{\lambda_{2}}X_{2}(b_{2})\right\}=0.$$
Therefore $X_{2}(\nu)=0$ or  $-8\nu^{3}+2\nu D-\displaystyle\frac{2E}{\lambda_{2}}X_{2}(b_{2})=0$.

If $X_{2}(\nu)=0$ we obtain  $b_{2}=0$ and  $T$ is a  principal direction. If not,
$$-8\nu^{3}+2\nu D-\displaystyle\frac{2E}{\lambda_{2}}\left(\nu\lambda_{2}+\displaystyle\frac{c\nu (b_{1})^{2}}{\lambda_{2}-\lambda_{1}}\right)=0.$$
 So,
\begin{equation}\label{eq26}
-8\nu^{3}+4\nu+4c\nu(\lambda_{2}-\lambda_{1})^{2}-2c\nu\lambda_{1}(\lambda_{2}-\lambda_{1})+2c\nu(\lambda_{2}^{2}-\lambda_{1}^{2})+
\displaystyle\frac{2\nu(\lambda_{2}+\lambda_{1})}{\lambda_{2}}b_{1}^{2}=0.
\end{equation}
From  (\ref{eq23}) we get
\begin{equation}\label{eq27}
\displaystyle\frac{2\nu(\lambda_{2}+\lambda_{1})}{\lambda_{2}}b_{1}^{2}=-2c\nu\lambda_{1}(\lambda_{2}-\lambda_{1})+2\nu(1-\nu^{2})-
\displaystyle\frac{4\nu^{3}(\lambda_{2}-\lambda_{1})}{\lambda_{2}}-\displaystyle\frac{4c\nu^{3}(1-\nu^{2})}{\lambda_{2}(\lambda_{2}-\lambda_{1})}.
\end{equation}
By replacing  (\ref{eq27}) in  (\ref{eq26}) it follows that
$$\displaystyle\frac{4c}{\lambda_{2}(\lambda_{2}-\lambda_{1})}\nu^{5}-
\nu^{3}\left(10+4\displaystyle\frac{(\lambda_{2}-\lambda_{1})^{2}+c}{\lambda_{2}(\lambda_{2}-\lambda_{1})}\right)+
6\nu\left(1+c(\lambda_{2}-\lambda_{1})^{2}\right)=0.$$
Observe that $\displaystyle\frac{4c}{\lambda_{2}(\lambda_{2}-\lambda_{1})}\neq0$ and we obtain a polynomial of fifth grade  on the variable $\nu$.

Note that  $\nu\equiv0$ is a solution for that equation and  $T$ is a  principal direction. If there are other solutions for that equation, also in that case   $\nu$ is  constant  and $T$ is a   principal direction.

In this way we conclude that  in case  $g=2$ and  $n=2$, the field  $T$ is a  principal direction.

Let us analyze now what occurs if $\nu\equiv0$ and  $\nu\neq0$.

If  $\nu\neq0$ the surface is locally given by Theorem \ref{ctdp}. If  $\nu=0$ the  surface is a cylinder  over a curve, $f(M^{2})=\alpha\times\mathbb{R}$, where  $\alpha$ is a circle  non totally geodesic in  $\mathbb{S}^{2}$, if $c=1$, or $\alpha$ is a equidistant curve, a horocycle or a hyperbolic circle  in  $\mathbb{H}^{2}$, if  $c=-1$.

Second case: $n\geq3$ \\
If  $\nu(p) \neq 0$, for all $p\in M$, from Theorem \ref{m1},  the immersion $f$ has at least one   principal curvature with  multiplicity one. As  $n\geq 3$ and  $g=2$ there exist only one principal curvature with multiplicity  $1$  and from  Proposition  \ref{dp}, $T$ is a  principal direction corresponding to that curvature. Then  $f$ is given by Theorem  \ref{ctdp}. From  Remark \ref{obsrot} it is a rotational hypersurface.

If $\nu\equiv0$ then  $f(M^{n})$ is an open subset  of the  Riemannian product $M^{n-1}\times\mathbb{R}$, where  $M^{n-1}$ is a hypersurface of  $\mathbb{Q}^{n}_{c}$. Since the principal curvature corresponding to the factor  $\mathbb{R}$ is null, using the classification of the isoparametric  hypersurfaces in  $\mathbb{Q}^{n}_{c}$, see \cite[Theorem 5]{DF} and \cite[p.4]{TC} the other  curvature may not be zero. So  $M^{n-1}$ must be an isoparametric umbilical and non totally geodesic  hypersurface in  $\mathbb{Q}^{n}_{c}$. This proves  item (ii).

(iii) Suppose  now   $g=3$. According to the considered dimension, there are three possibilities  for the multiplicities  of principal curvatures: two  curvatures with  multipliciity $1$,  just one curvature with multiplicity one and all the curvatures  with multiplicities $\geq2$. Let us analyze each case.

First case:  Suppose that two of the curvatures have multiplicity one.

From  Proposition  \ref{dcpm1}, we don't have  $\nu(p)\neq0$, for all $p\in M^{n}$. Therefore  $\nu\equiv0$ and consequently $f(M^{n})$ is an open subset of the  Riemannian product  $M^{n-1}\times\mathbb{R}$. Then one of the curvatures must be zero.

If  $c=-1$, $\lambda=0$ and the principal curvatures  of an isoparametric hypersurface  with $g=2$ in $\mathbb{H}^{n}$ satisfy $\mu\gamma=1$ and  $\lambda=0$ has multiplicity one. Then  $f(M^{n})$ is locally the product  $\mathbb{S}^{1}\times\mathbb{H}^{n-2}\times\mathbb{R}$ or $\mathbb{S}^{n-2}\times\mathbb{H}^{1}\times\mathbb{R}$.

If $c=1$ and  $n\geq4$, $f(M^{n})$ is an open subset  $\mathbb{S}^{1}\times\mathbb{S}^{n-2}\times\mathbb{R}$. Moreover for  $n=4$ it may also occur that  $f(M^{n})$ is  locally given by $M^{3}\times\mathbb{R}$, where  $M^{3}$ is a tube over a Veronese surface in $\mathbb{S}^{4}$.

Second case:  Suppose that  just one curvature has  multiplicity  $1$.

If  $\nu(p)\neq0$,  for all $p\in M^{n}$, from Proposition  \ref{dp} the field  $T$ is a principal  direction and from  Theorem \ref{ctdp} we get  $c=-1$ and  $g=2$, which may not occur. Then  $\nu\equiv0$ and  $f(M^{n})$ is  an open subset  of the  Riemannian product $M^{n-1}\times\mathbb{R}$. In this case  the curvature with  multiplicity  $1$  is $\lambda=0$.

Now let us  explicit  $M^{n-1}$ if  $c=-1$ and  $c=1$.

If  $c=-1$,  $f(M^{n})$ is an open subset of  $\mathbb{S}^{k}\times\mathbb{H}^{n-k-1}\times\mathbb{R}$, with  $k\geq2$ and  $n\geq5$.
If  $c=1$,  $f(M^{n})$ is an open subset of   $\mathbb{S}^{k}\times\mathbb{S}^{n-k-1}\times\mathbb{R}$, with  $k\geq2$ and  $n\geq5$.

Third  case:  Suppose that all the curvatures have multiplicity   $\geq2$.

From Remark \ref{obs31}, we get  $\nu\equiv0$ and one the principal curvatures is  $\lambda=0$. Moreover, $f(M^{n})$ is an open subset  of the  Riemannian product $M^{n-1}\times\mathbb{R}$.

Then if  $c=-1$, by \cite[Theorem 5]{DF}, there does not exist  isoparametric hypersurfaces, non totally geodesic in  $\mathbb{H}^{n}$ with a principal curvature equal to zero. Otherwise, $\lambda=0$ would have multiplicity one,  which may not occur.

If  $c=1$, the isoparametric hypersurfaces in $\mathbb{S}^{n}$ with  $g=3$, are the Cartan's hypersurfaces.  They have one principal curvature equal to zero.  Then that case occur  for  $n\in\{7,13,25\}$ and  $f(M^{n})$ is locally given by the product $M^{n-1}\times\mathbb{R}$, where  $M^{n-1}$ is a Cartan's hypersurface in  $\mathbb{S}^{n}$.
\end{proof}

\begin{remark} The hypersurfaces classified in  Theorem \ref{teoclass} have function  $\nu$ constant and from   Corollary \ref{haci} they are isoparametric in  $\QR$.
\end{remark}

\section*{Acknowledgements}
The second author was partially supported by CAPES and CNPq, Brazil.
 %The authors would like to thank the referee for valuable suggestions.

\bibliographystyle{amsplain}

\begin{thebibliography}{30}

\bibitem{rosenberg2} { U. Abresch and H. Rosenberg}.
{\em A Hopf differential for constant mean curvature surfaces in $\mathbb S^2 \times \mathbb R$ and $\mathbb H^2 \times\mathbb R$}.
 Acta Math. (2) {\bf 193} (2004), 141--174.

\bibitem{espinar1} { J.A. Aledo, J.M. Espinar and J.A. G\'alvez}.
{\em Complete surfaces of constant curvature in $\mathbb H^2 \times \mathbb R$ and $\mathbb S^2 \times\mathbb R$}.
 Calc. Var. Partial Differ. Equ. (3) {\bf 29} (2007), 347--363.

\bibitem{espinar2} { J.A. Aledo, J.M. Espinar and J.A. G\'alvez}
{\em Surfaces with constant curvature in  $\mathbb S^2 \times \mathbb R$ and $\mathbb H^2 \times\mathbb R$. Height estimates and representation}.
 Bull. Braz. Math. Soc. (N.S) (4) {\bf 38} (2007), 533--554.

\bibitem{Ca1}{ E. Cartan}
{\em Sur quelques familles remarquables d'hypersurfaces}.
C.R. Congr\`es Math. Li\`ege, (1939), 30--41.

\bibitem{TC} { T.E. Cecil}
{\em Isoparametric and Dupin Hypersurfaces}.
SIGMA, Symmetry Integrability Geom. Methods Appl. [electronic only] (4) (2008), 28p.


\bibitem{CR} { T.E. Cecil and P.J. Ryan}
{\em Tight and taut immersions of manifolds}.
 Research Notes in Mathematics, 107. Boston - London - Melbourne, (1985), 336p.

\bibitem{Md} { M. Dajczer, M. Antonucci, G. Oliveira, P. Lima-Filho \and R. Tojeiro}.
{\em Submanifolds and isometric immersions}.
Mathematics Lectures Series, 13. Houston, Texas: Publish or Perish, Inc. viii, (1990), 173p.

\bibitem{rot} { F. Dillen, J. Fastenakels and J. Van Der Veken}.
{\em Rotation hypersurfaces in $\mathbb S^n \times \mathbb R$ and $\mathbb H^n \times\mathbb R$}.
 Note Mat. (1) {\bf 29} (2009), 41--54.

\bibitem{rotM} { M. P. Do Carmo and M. Dajczer}.
{\em Rotation hypersurfaces in spaces of constant curvature}.
 Trans. Am. Math. Soc. {\bf 277} (1983), 685--709.

\bibitem{espinar3} { J.M. Espinar, J.A. G\'alvez and H. Rosenberg}.
{\em Complete surfaces with positive extrinsic curvature in product spaces}.
Comment. Math. Helv. (2) {\bf 84} (2009), 351--386.

\bibitem{DF}{ D. Ferus}.
{\em Notes on isoparametric hypersurfaces}.
Escola de Geometria Diferencial, Universidade Estadual de Campinas, (1980).

\bibitem{Manfio}{ F. Manfio and R. Tojeiro}.
{\em Hypersurfaces with constant sectional curvature of $\Bbb S^n\times\Bbb R$ and $\Bbb H^n\times\Bbb R$}.
Ill. J. Math. (1) {\bf 55} (2011), 397--415.

\bibitem{meeks} { W.H. Meeks and H. Rosenberg}.
{\em Stable minimal surfaces in $\mathbb M^n \times \mathbb R$}.
 J. Differ. Geom. (3) {\bf 68} (2004), 515--534.

\bibitem{irene2} { S. Montaldo and I.I. Onnis}
{\em Enneper representation and the Gauss map of minimal surfaces in the product $\mathbb H^2 \times \mathbb R$}.
Mat. Contemp. {\bf 33} (2007), 197--211.

\bibitem{irene1} { S. Montaldo and I.I. Onnis}.
{\em Invariant CMC surfaces in $\mathbb H^2\times\mathbb R$}.
Glasg. Math. J. (2) {\bf 46} (2004), 311--321.

\bibitem{nelli1} { B. Nelli}.
{\em On properties of constant mean curvature surfaces in $\mathbb H^2\times\mathbb R$}.
In Geometry seminars. 2005-2009. Dedicated to Aldo Andreotti on the 13th anniversary of his passing.
Bologna: Universit\`a degli Studi di Bologna, Dipartimento di Matematica, (2010), 97--104.

\bibitem{Nomizu} { K. Nomizu}.
{\em Characteristic roots and vectors of a differentiable family of symmetric matrices}.
Linear Multilinear Algebra {\bf 1} (1973), 159--162.


\bibitem{irene3} { I.I. Onnis}.
{\em Invariant surfaces with constant mean curvature in $\mathbb H^2 \times \mathbb R$}.
 Ann. Mat. Pura Appl. (4) {\bf 187} (2008), 667--682.

\bibitem{rosenberg1} { H. Rosenberg}.
{\em Minimal surfaces in $M^2\times \mathbb R$}.
Ill. J. Math. (4) {\bf 46} (2002), 1177--1195.

\bibitem{Ry1} { P. J. Ryan}.
{\em Homogeneity and some curvature conditions for hypersurfaces}.
Tohoku Math. J., II. Ser. {\bf 21} (1969), 363--388.

\bibitem{Survey} { G. Thorbergsson}.
{\em A survey on isoparametric hypersurfaces and their generalizations}.
Amsterdam: North-Holland (2000).

\bibitem{RT1} { R. Tojeiro}.
{\em On a class of hypersurfaces in $\mathbb S^n\times\Bbb R$ and $\mathbb H^n\times\Bbb R$}.
Bull. Braz. Math. Soc. (N.S.) (2) {\bf 41} (2010), 199--209.

\bibitem{totgeod} { J. Van der Veken and L. Vrancken}.
{\em Parallel and semi-parallel hypersurfaces of $\mathbb{S}^n \times \mathbb R$}.
Bull. Braz. Math. Soc. (N.S.) (3) {\bf 39} (2008), 355--370.

\bibitem{Wang} { Q-M. Wang}.
{\em Isoparametric hypersurfaces in complex projective spaces}.
Differential geometry and differential equations, Proc. {\bf 3} (1982), 1509--1523.



%%%%%%%%%%%%%%%%%%%%%%%%%%%%%%%%%%%%%%%%%%%%%%%%%%%%%%%%%%%%%%%%%%%%%%%%%%%%%%%%%%%%%%%%%%%%%%%%%%%%%%%%%%%%%%%%%%%%%%%%%%%%%%%%%%



\end{thebibliography}

\end{document}